\newtheorem{theorem}{Theorem}
\newtheorem{conjec}{Theorem}
\newtheorem{lemma}[theorem]{Lemma}
\newtheorem{proposition}[theorem]{Proposition}
\newtheorem{conjecture}[conjec]{Conjecture}
\journal{Journal of Combinatorial Theory, Series B}
\begin{document}

\begin{frontmatter}
\title{Three conjectures in extremal spectral graph theory}
\author{Michael Tait\corref{cor1}}\ead{mtait@cmu.edu}

\author{Josh Tobin\corref{cor2}}\ead{rjtobin@math.ucsd.edu}

\cortext[cor1]{Some of this research was done while the first author was supported by NSF grant DMS-1606350.} 

\cortext[cor2]{Some of this research was done while both authors were partially supported by NSF grant DMS-1362650.}

\address[add1]{Department of Mathematical Sciences, Carnegie Mellon University}
\address[add2]{Department of Mathematics, University of California, San Diego}

\begin{abstract}We prove three conjectures regarding the maximization of spectral invariants over certain families of graphs. Our most difficult result is that the join of $P_2$ and $P_{n-2}$ is the unique graph of maximum spectral radius over all planar graphs. This was conjectured by Boots and Royle in 1991 and independently by Cao and Vince in 1993. Similarly, we prove a conjecture of Cvetkovi\'c and Rowlinson from 1990 stating that the unique outerplanar graph of maximum spectral radius is the join of a vertex and $P_{n-1}$. Finally, we prove a conjecture of Aouchiche et al from 2008 stating that a pineapple graph is the unique connected graph maximizing the spectral radius minus the average degree. To prove our theorems, we use the leading eigenvector of a purported extremal graph to deduce structural properties about that graph.\end{abstract}

\end{frontmatter}

\section{Introduction}

Questions in {\em extremal graph theory} ask to maximize or minimize a graph invariant over a fixed family of graphs. Perhaps the most well-studied problems in this area are Tur\'an-type problems, which ask to maximize the number of edges in a graph which does not contain fixed forbidden subgraphs. Over a century old, a quintessential example of this kind of result is Mantel's theorem, which states that $K_{\lceil n/2\rceil, \lfloor n/2\rfloor}$ is the unique graph maximizing the number of edges over all triangle-free graphs. {\em Spectral graph theory} seeks to associate a matrix to a graph and determine graph properties by the eigenvalues and eigenvectors of that matrix. This paper studies the maximization of spectral invariants over various families of graphs. We prove three conjectures for $n$ large enough.

\begin{conjecture}[Boots--Royle 1991 \cite{BootsRoyle1991} and independently Cao--Vince 1993 \cite{CaoVince1993}]\label{planar conjecture}
The planar graph on $n\geq 9$ vertices of maximum spectral radius is $P_2 + P_{n-2}$.
\end{conjecture}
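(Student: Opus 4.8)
The plan is to take $G$ to be a planar graph on $n$ vertices maximizing $\lambda:=\lambda_1(G)$ and to determine its structure from the Perron eigenvector. I begin with the standard reductions: adding an edge to a connected graph strictly increases the Perron value, so $G$ is connected and edge-maximal planar, hence a triangulation with exactly $3n-6$ edges. Since $K_{2,n-2}\subseteq P_2+P_{n-2}$ and $P_2+P_{n-2}$ is planar, $\lambda\geq\lambda_1(K_{2,n-2})=\sqrt{2n-4}$, so $\lambda=\Theta(\sqrt n)$. Let $\mathbf{x}>0$ be the Perron eigenvector, scaled so that $\max_v x_v=x_{v_1}=1$. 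A first global estimate is $\lambda\sum_v x_v=\sum_v\deg(v)x_v\leq 2e(G)<6n$, so $\sum_v x_v<6n/\lambda=O(\sqrt n)$: the eigenvector mass is concentrated, and only $O(\sqrt n)$ vertices can have $x_v$ bounded away from $0$.

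The heart of the argument is the structural claim that $G$ has two vertices $v_1,v_2$ adjacent to one another and to each of the remaining $n-2$ vertices. I would derive this from the ``second moment'' identity $\lambda^2=\lambda^2x_{v_1}=\deg(v_1)+\sum_{w\neq v_1}x_w\,|N(v_1)\cap N(w)|$, bounding its right-hand side using the planarity facts that $G[N(v_1)]$ is outerplanar (so it has fewer than $2\deg(v_1)$ edges) and that $\sum_{w\neq v_1}|N(v_1)\cap N(w)|=\sum_{u\sim v_1}(\deg(u)-1)\le 2e(G)<6n$. Together with $\sum_v x_v=O(\sqrt n)$ this gives, in sequence: (i) if every coordinate $x_w$ with $w\neq v_1$ were below a fixed small constant, the identity would force $\lambda^2<2n-4$, a contradiction, so some vertex $v_2$ has $x_{v_2}$ bounded below by an absolute constant; (ii) since $G$ has no $K_{3,3}$, at most two vertices can share a linear-in-$n$ number of neighbours with $v_1$, and a similar analysis then shows $x_w=O(1/\sqrt n)$ for every $w\notin\{v_1,v_2\}$; and (iii) feeding (ii) back into the eigen-equations at $v_1$ and at $v_2$ bootstraps $x_{v_1},x_{v_2}=1-o(1)$ and $\deg(v_1),\deg(v_2)=n-o(n)$.

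Turning ``almost universal'' into ``universal'' (and forcing $v_1\sim v_2$) is a rigidity step I would handle by local modifications of the triangulation. If $v_1$ or $v_2$ misses some vertex $w$, or if $v_1\not\sim v_2$, then, using that $G$ is a triangulation and that all coordinates outside $\{v_1,v_2\}$ are $O(1/\sqrt n)$, one finds an edge $\{a,b\}$ with $x_a,x_b=O(1/\sqrt n)$ whose deletion together with insertion of the missing edge leaves the graph planar; since the inserted edge is incident to a coordinate of size $1-o(1)$ while the deleted edge costs only $O(1/n)$ in the numerator, the Rayleigh quotient $\mathbf{x}^{\mathsf{T}}A\mathbf{x}/\mathbf{x}^{\mathsf{T}}\mathbf{x}$ strictly increases, contradicting the maximality of $\lambda$. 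I expect this passage, from the soft picture of two dominant coordinates with everything else negligible to the exact combinatorial structure, to be the main obstacle: the two special vertices are spectrally entangled, so one cannot analyse them one at a time, and every exchange move must be checked for planarity; this is also where largeness of $n$ is genuinely needed, to absorb the $o(n)$ error terms.

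Finally, once $G$ is known to have two adjacent universal vertices, write $G=K_2+H$ with $H:=G[V\setminus\{v_1,v_2\}]$ on $n-2$ vertices. Planarity forces $H$ to be a linear forest: a vertex of degree at least $3$ in $H$ would, with $v_1$ and $v_2$, give a $K_{3,3}$ subgraph, and a cycle in $H$ would, after being contracted to a triangle, give a $K_5$ minor. A linear forest on $n-2$ vertices has at most $n-3$ edges, and since $G$ is a triangulation $e(H)=e(G)-1-2(n-2)=n-3$; the unique linear forest on $n-2$ vertices with $n-3$ edges is the path $P_{n-2}$. Hence $G=K_2+P_{n-2}=P_2+P_{n-2}$, as claimed.
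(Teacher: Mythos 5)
Your outline has the same overall architecture as the actual proof (two hub vertices located via the walk-counting identity, small entries elsewhere, planarity-preserving edge swaps, then the linear-forest endgame), but the middle steps --- exactly the part you flag as the main obstacle --- do not go through as written. In step (ii) you claim $x_w=O(1/\sqrt n)$ for every $w\notin\{v_1,v_2\}$ before the near-universality of $v_1,v_2$ is established. At that point the only available tools are $\sum_v x_v=O(\sqrt n)$ and the co-neighbourhood bound, and these give only $x_w=O(1)$: the generic estimate is $\lambda x_w\le x_{v_1}+x_{v_2}+\sum_{u\ne v_1,v_2}x_u=2+O(\sqrt n)$, and nothing you have proved yet excludes a third vertex of degree $cn$ whose neighbours carry moderate eigenvector mass. (The paper's corresponding lemma only proves the constant bound $\mathbf{v}_v<1/10$, and only \emph{after} showing that all but $O(\epsilon n)$ vertices are adjacent to both hubs, so that $K_{3,3}$-freeness limits every other vertex to at most $4$ neighbours of non-negligible entry.) Likewise, the bootstrap in (iii) to $x_{v_2}=1-o(1)$ is not a consequence of your second-moment identity: with $\sum_{w\neq v_1}|N(v_1)\cap N(w)|<6n$ that identity yields only $x_{v_2}\ge c$ for an absolute constant on the order of $1/6$, since the co-neighbourhood mass can a priori be spread over several vertices of constant entry. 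Upgrading a constant to $1-o(1)$ is the real content of the theorem; the paper does it by a separate averaging argument over the edges between the small- and large-entry classes, using $\sum_{uv\in E}(\mathbf{v}_u+\mathbf{v}_v)\ge\lambda_1^2+\lambda_1\ge 2n-4$ together with the bipartite-planar bound $e(S,L)\le 2n-4$ and $d_x\ge(1-O(\epsilon))n$, so that only $(1+O(\epsilon))n$ edge-endpoints outside $x$ must carry $(1-O(\epsilon))n$ of eigenvector weight, forcing a single vertex $w$ with $\mathbf{v}_w>1-24\epsilon$. Your sketch supplies no equivalent mechanism, and a bound like $x_{v_2}\ge 1/6$ is too weak for the later swap arguments, which need the gain $2x_v(x_{v_1}+x_{v_2})$ to beat a loss of roughly $2x_v\cdot 7\cdot\tfrac1{10}$.

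A secondary gap is the planarity of the exchange move when the missing edge is $\{v_1,v_2\}$ itself: finding an edge $\{a,b\}$ of small weight whose deletion permits inserting $\{v_1,v_2\}$ planarly is a genuine lemma, proved in the paper by an Euler-formula face count (at least $(1-\delta)n$ faces meet $x$, at least a third as many more are adjacent to those, and $w$ meets at least $(1-\delta)n$ faces, so among the $2n-4$ faces the two families must overlap). Once $v_1\sim v_2$ the remaining reattachments are easy, and your final step --- computing $e(H)=3n-6-1-2(n-2)=n-3$ and concluding that the linear forest $H$ is $P_{n-2}$ --- is correct and slightly more direct than the paper's subgraph-monotonicity finish.
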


\begin{conjecture}[Cvetkovi\'c--Rowlinson 1990 \cite{CvetkovicRowlinson1990}]\label{outerplanar conjecture}
The outerplanar graph on $n$ vertices of maximum spectral radius is $K_1 + P_{n-1}$.
\end{conjecture}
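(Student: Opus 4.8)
The plan is to deduce structural information about a putative extremal graph from its leading eigenvector, using the same technique as for the planar case but in its simpler incarnation. Let $G$ be an outerplanar graph on $n$ vertices of maximum spectral radius. A standard argument lets me assume $G$ is connected: otherwise one could attach all other components to a fixed component of largest spectral radius by a pendant path, preserving outerplanarity and strictly increasing the spectral radius. Let $x>0$ be the Perron eigenvector of $A(G)$ with eigenvalue $\lambda$, scaled so that $\max_v x_v = x_z = 1$. Since $K_1+P_{n-1}$ is outerplanar and contains $K_{1,n-1}$ we have $\lambda\ge\sqrt{n-1}$. I will use three features of an outerplanar graph on $m$ vertices: it has at most $2m-3$ edges and is $2$-degenerate; it contains no $K_{2,3}$ as a subgraph, so any two of its vertices have at most two common neighbours; and it contains no $K_4$-minor.

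First I would show that $z$ is adjacent to all but $O(\sqrt n)$ of the vertices and that the eigenvector weight concentrates near $z$. From $\lambda\|x\|_1=\sum_w d_w x_w\le 2e(G)\le 4n$ we get $\|x\|_1=O(\sqrt n)$; applying $A^2$ at $z$ and using that each $w\neq z$ has at most two common neighbours with $z$,
\[
\lambda^2=\lambda^2 x_z \le d_z+2\sum_{w\neq z}x_w \le d_z+2\|x\|_1,
\]
so $d_z\ge\lambda^2-2\|x\|_1\ge n-1-O(\sqrt n)$; writing $R=V\setminus N[z]$ we obtain $|R|=O(\sqrt n)$. Next, every $v\in R$ has at most two neighbours in $N(z)$ (three would form a $K_{2,3}$ through $z$), so
\[
\lambda\sum_{v\in R}x_v=\sum_{v\in R}\sum_{w\sim v}x_w \le 2|R|+2e(G[R]) \le 6|R| = O(\sqrt n),
\]
whence $\sum_{v\in R}x_v=O(1)$. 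Substituting this into the eigenvalue equation at the heaviest vertex of $R$ gives $\max_{v\in R}x_v=O(1/\sqrt n)$, and then, using $|R|=O(\sqrt n)$, the equation at the heaviest vertex of $N(z)$ gives $\max_{u\in N(z)}x_u=O(1/\sqrt n)$.

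Then I would prove $R=\emptyset$. Suppose not. Since $G[R]$ is outerplanar it is $2$-degenerate, so it has a vertex $v$ with $\deg_{G[R]}(v)\le 2$; as $v$ is adjacent to at most two vertices of $N(z)$ and to no vertex outside $R$, $\lambda x_v=\sum_{w\sim v}x_w\le 2\max_{u\in N(z)}x_u+2\max_{w\in R}x_w=O(1/\sqrt n)$, so $\lambda x_v<1$ for $n$ large. Let $G'=G-\{vu:u\sim v\}+vz$, which is outerplanar, being a subgraph of $G$ with a single pendant edge added. Testing with the eigenvector $x$ of $G$,
\[
\frac{x^{\top}A(G')x}{\|x\|^2}=\lambda+\frac{2x_v\bigl(1-\lambda x_v\bigr)}{\|x\|^2}>\lambda,
\]
contradicting the maximality of $\lambda(G)$. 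Hence $R=\emptyset$, so $d_z=n-1$ and $G=K_1+G[N(z)]$. Since $G$ is outerplanar, $G[N(z)]$ has maximum degree at most $2$, and a cycle in $G[N(z)]$ together with $z$ would contain a $K_4$-minor, so $G[N(z)]$ is a disjoint union of paths; hence $G$ is a spanning subgraph of $K_1+P_{n-1}$. As $K_1+P_{n-1}$ is outerplanar and connected, maximality forces $G=K_1+P_{n-1}$, which also yields uniqueness.

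The main obstacle is the concentration step together with the choice of which vertex of $R$ to reattach. A priori the $\Theta(\sqrt n)$ vertices of $R$ could carry $\Theta(1)$ of the eigenvector weight between them, so no uniform pointwise bound makes it profitable to move an arbitrary vertex of $R$ onto $z$: the local move helps only when the moved vertex has weighted degree below $x_z=1$. The key point — the one genuinely delicate step — is that one should reattach a bounded-degree vertex of $G[R]$, which exists by $2$-degeneracy: it has $O(1)$ neighbours, each of weight $O(1/\sqrt n)$, so its weighted degree is $O(1/\sqrt n)\ll 1$ and the move strictly increases $\lambda$. The remaining work is routine, except that the implied constants must be tracked to see how large $n$ must be taken.
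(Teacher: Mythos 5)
Your proposal is correct and follows essentially the same strategy as the paper's proof: use $K_{2,3}$-freeness and the edge bound $e(G)\le 2n-3$ to show the maximum-entry vertex is adjacent to all but $O(\sqrt n)$ vertices, show every other eigenvector entry is $O(1/\sqrt n)$, empty the set $R$ of non-neighbours by a Rayleigh-quotient rewiring onto that vertex, and read off $K_1+P_{n-1}$ from the structure of the neighbourhood. The only (harmless) divergence is in the rewiring step: the paper establishes $\sum_{v\in R}\mathbf{v}_v=O(1/\sqrt n)$ and can therefore move an arbitrary vertex of $R$, whereas you only get $\sum_{v\in R}x_v=O(1)$ and compensate by selecting a vertex of degree at most two in $G[R]$ via $2$-degeneracy.
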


\begin{conjecture}[Aouchiche et al 2008 \cite{AouchicheEtAl2008}]\label{pineapple conjecture}
The connected graph on $n$ vertices that maximizes the spectral radius minus the average degree is a pineapple graph.
\end{conjecture}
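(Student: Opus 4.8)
The plan is to pin down the conjectured extremal pineapple, use it as a benchmark to constrain any connected maximizer $G$, and then perform single-edge surgeries on $G$ guided by its Perron eigenvector. Write $PA(p,q)$ for $K_p$ with $q$ pendant vertices attached to a single fixed vertex of the clique, so $n=p+q$. One computes $\bar d(PA(p,q))=2+\frac{p(p-3)}{n}$, while $\lambda_1(PA(p,q))$ is the largest root of $x^3-(p-2)x^2-(n-1)x+(n-p)(p-2)=0$; for $p\gg\sqrt n$ this gives $\lambda_1(PA(p,n-p))=p-1+o(1)$ and hence $\lambda_1-\bar d=\frac{(n-p)(p-3)}{n}+o(1)$, whereas for $p=O(\sqrt n)$ one has $\lambda_1-\bar d=O(\sqrt n)$. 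Thus, for $n$ large, $\lambda_1-\bar d$ over all pineapples is maximized at a unique $p^{\ast}=\frac n2+O(1)$, with value $\beta_n=\frac{(n-3)^2}{4n}+o(1)=\frac n4-\frac32+o(1)$; in particular the conjectured extremum has clique of order about $n/2$. It therefore suffices to show that every connected maximizer equals $PA(p^{\ast},n-p^{\ast})$.

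Let $G$ be such a maximizer, $\lambda=\lambda_1(G)$, and let $\mathbf x$ be its Perron vector scaled so $\max_v x_v=x_u=1$; put $W=\|\mathbf x\|_2^2$. Combining $\beta_n\le\lambda-\bar d(G)$ with $\lambda\le\sqrt{2e(G)-n+1}$ (valid for connected $G$) and $\lambda\le n-1$ forces $e(G)=\frac{n^2}{8}+O(n^{3/2})$, $\bar d(G)=\frac n4+O(\sqrt n)$, and $\lambda=\frac n2+O(\sqrt n)$. The workhorse is a pair of switching inequalities. Adding a non-edge $ab$ raises $\bar d$ by exactly $2/n$ and, by the Rayleigh quotient at $\mathbf x$, raises $\lambda_1$ by at least $2x_ax_b/W$, so optimality forces $x_ax_b\le W/n$. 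Deleting an edge $ab$ for which $G-ab$ is still connected lowers $\bar d$ by $2/n$ and lowers $\lambda_1$ by at most $2x_ax_b/W$, so optimality forces $x_ax_b\ge W/n$. Hence $G$ is a threshold graph up to ties and up to bridges: the set $S=\{v:x_v^2>W/n\}$ is a clique, every pair with $x$-product strictly above $W/n$ is an edge, and every non-bridge pair with $x$-product strictly below $W/n$ is a non-edge.

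The core of the argument is to upgrade this to the assertion that $\mathbf x$ is essentially two-valued. Feeding $\lambda=\frac n2+O(\sqrt n)$ and $e(G)=\frac{n^2}{8}+O(n^{3/2})$ into the eigenvalue equations $\lambda x_v=\sum_{w\sim v}x_w$ and iterating, one shows $x_v=1-o(1)$ for $v\in S$, $x_v=O(1/n)$ for $v\notin S$, and $|S|=\frac n2+O(1)$; in particular $W=|S|+o(n)$, so the threshold $W/n\approx\frac12$ genuinely separates the two clusters and no coordinate is intermediate. Once the coordinates off $S$ are $O(1/n)$, two further surgeries finish the job. An edge inside $V\setminus S$ has $x$-product $O(1/n^2)<W/n$, so it must be a bridge; but a Kelmans-type rotation reattaching the resulting pendant piece directly to $u$ leaves $e(G)$ (hence $\bar d$) unchanged while strictly increasing $\lambda_1$, a contradiction, so $V\setminus S$ is independent. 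Similarly, if some $v\notin S$ had two neighbors, deleting its incident cross-edge of smaller $x$-weight would leave $G$ connected, change $\bar d$ by $-2/n$ and $\lambda_1$ by only $O(1/n^2)$, a net gain; so every vertex of $V\setminus S$ is a pendant. The nested-neighborhood structure of a threshold graph then forces all of these pendants onto one vertex of $S$, which must be $u$, whence $G=PA(|S|,n-|S|)$; comparing with the one-parameter optimization of the first paragraph gives $|S|=p^{\ast}$, as required.

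The main obstacle is the bootstrap in the third paragraph. The switching inequalities alone only determine $G$ up to a threshold graph modulo bridges and modulo coordinates whose pairwise products hover near $W/n\approx\frac12$; excluding such near-threshold coordinates, i.e., quantitatively separating the $\approx1$ coordinates from the $\approx1/\lambda$ ones and thereby obtaining the exact count $|S|=\frac n2+O(1)$, is what requires iterating the eigenvalue equations against the near-extremal values of $\lambda$ and $\bar d$. The attendant bridge-and-rotation bookkeeping is routine but must be carried out carefully, and the closing discrete optimization over $p$, while elementary, needs the lower-order behavior of $\lambda_1(PA(p,n-p))$ to isolate the unique $p^{\ast}$.
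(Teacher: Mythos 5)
Your overall strategy mirrors the paper's: benchmark against the optimal pineapple to force $\lambda_1 = n/2 + O(\sqrt n)$ and $\bar d = n/4 + O(\sqrt n)$ via Hong's inequality, derive the two single-edge switching inequalities from the Rayleigh quotient (these are exactly Lemma \ref{modifying conditions}), and finish with surgeries. But the step you yourself flag as ``the main obstacle'' --- that the Perron vector is two-valued, with $x_v = 1-o(1)$ on $S$ and $x_v=O(1/n)$ off $S$ and nothing in between --- is a genuine gap, and the method you propose for it (``iterating the eigenvalue equations against the near-extremal values of $\lambda$ and $\bar d$'') cannot close it. A vertex $z$ adjacent to roughly $n/4$ of the clique vertices has $x_z \approx (n/4)/\lambda_1 \approx 1/2$; its products with clique vertices sit exactly at your threshold $W/n\approx 1/2$, so neither switching inequality decides its edges, it satisfies the eigenvalue equation, and a bounded number of such vertices perturbs $e(G)$ by only $O(n)$, well inside your $O(n^{3/2})$ window. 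Every constraint you have assembled is therefore consistent with such intermediate vertices existing. The paper must rule them out with a different device: Proposition \ref{almost pineapple structure} isolates exactly this class of vertices with entries near $1/2$ (its set $W$, not to be confused with your $W=\|\mathbf{x}\|_2^2$), and the final theorem kills it with a \emph{multi-edge} surgery --- join such a $z$ to all of $U$ at once --- where the gain of about $3/4$ in $\lambda_1$ beats the gain of about $1/2+3\epsilon$ in $\bar d$ only in aggregate; no single-edge switch detects this. Relatedly, establishing in the first place that the top cluster is a clique of size $n/2+o(n)$ requires the averaging arguments of Lemmas \ref{error in x neighborhood small} and \ref{u good} (producing a second vertex $u$ with $\mathbf{v}_u$ near $1$ and $d_u-\lambda_1\mathbf{v}_u=O(\sqrt n)$), which your sketch also elides.

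A smaller issue: your last step, forcing all pendants onto one clique vertex via ``the nested-neighborhood structure of a threshold graph,'' does not follow from your switching inequalities, since deleting a pendant edge disconnects the graph and your deletion inequality never applies to it. The paper instead rewires all pendants to $x$, observes that $\bar d$ is unchanged while $\lambda_1$ cannot decrease, and extracts from the equality case that each attachment vertex has eigenvector entry $1$ --- which a pineapple admits only once.
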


In this paper, we prove Conjectures \ref{planar conjecture}, \ref{outerplanar conjecture}, and \ref{pineapple conjecture}, with the caveat that we must assume $n$ is large enough in all of our proofs. We note that the Boots--Royle/Vince--Cao conjecture is not true when $n\in \{7,8\}$ and thus some bound on $n$ is necessary.

For each theorem, the rough structure of our proof is as follows. A lower bound on the invariant of interest is given by the conjectured extremal example. Using this information, we deduce the approximate structure of a (planar, outerplanar, or connected) graph maximizing this invariant. We then use the leading eigenvalue and eigenvector of the adjacency matrix of the graph to deduce structural properties of the extremal graph. Once we know the extremal graph is ``close" to the conjectured graph, we show that it must be exactly the conjectured graph. The majority of the work in each proof is done in the step of using the leading eigenvalue and eigenvector to deduce structural properties of the extremal graph. 

\subsection{History and motivation}

Questions in extremal graph theory ask to maximize or minimize a graph invariant over a fixed family of graphs. This question is deliberately broad, and as such branches into several areas of mathematics. We already mentioned Mantel's Theorem as an example of a theorem in extremal graph theory. Other classic examples include the following. Tur\'an's Theorem \cite{Turan1941} seeks to maximize the number of edges over all $n$-vertex $K_r$-free graphs. The Four Color Theorem seeks to maximize the chromatic number over the family of planar graphs. Questions about maximum cuts over various families of graphs have been studied extensively (cf \cite{Alon1996, BollobasScott2002, Chung1997, GoemansWilliamson1995}). The Erd\H{o}s distinct distance problem seeks to minimize the number of distinct distances between $n$ points in the plane \cite{Erdos1946, GuthKatz2015}.

This paper studies {\em spectral extremal graph theory}, the subset of these extremal problems where invariants are based on the eigenvalues or eigenvectors of a graph. This subset of problems also has a long history of study. Examples include Stanley's bound maximizing spectral radius over the class of graphs on $m$ edges \cite{Stanley1987}, the Alon--Bopanna--Serre Theorem (see \cite{Murty2003, Nilli1991}) and the construction of Ramanujan graphs (see \cite{LubotzkyPhillipsSarnak1988}) minimizing $\lambda_2$ over the family of $d$-regular graphs, theorems of Wilf \cite{Wilf1986} and Hoffman \cite{Hoffman1970} relating eigenvalues of graphs to their chromatic number, and many other examples. Very recently, Bollob\'as, Lee, and Letzter studied maximizing the spectral radius of subgraphs of the hypercube on a fixed number of edges \cite{BollobasLeeLetzter2016}.

A bulk of the recent work in spectral extremal graph theory is by Nikiforov, who has considered maximizing the spectral radius over several families of graphs. Using the fundamental inequality that $\lambda_1(A(G)) \geq 2e(G)/n$, Nikiforov recovers several classic results in extremal graph theory. Among these are spectral strengthenings of Tur\'an's Theorem \cite{Nikiforov2002}, the Erd\H{o}s--Stone--Bollob\'as Theorem \cite{Nikiforov2009ESB}, and the K\H{o}vari--S\'os--Tur\'an Theorem regarding the Zarankiewicz problem \cite{Nikiforov2010} (this was also worked on by Babai and Guiduli \cite{BabaiGuiduli2009}). For many other similar results of Nikiforov, see \cite{Nikiforov2011}. 

We now turn to the history specific to our theorems. 

The study of spectral radius of planar graphs has a long history, dating back to at least Schwenk and Wilson \cite{SchwenkWilson1978}.  This direction
of research was further motivated by applications where the spectral
radius is used as a measure of the connectivity of a network, in
particular for planar
networks in areas such as geography, see for example \cite{BootsRoyle1991}
and its references.  To compare connectivity of networks to a theoretical upper bound, geographers were interested in finding the planar graph of maximum spectral radius. To this end, Boots and Royle and independently Cao and Vince conjectured that the extremal graph is $P_2 + P_{n-2}$
\cite{BootsRoyle1991}, \cite{CaoVince1993}. Several researchers have worked on this problem and successively improved upon the best theoretical upper bound, including \cite{Hong1988},
\cite{CaoVince1993}, \cite{Hong1995}, \cite{Guiduli1996}, \cite{Hong1998},
\cite{EllinghamZha2000}. Other related problems have been considered, for example Dvo\v{r}\'{a}k
and Mohar found an upper bound on the spectral radius of planar graphs
with a given maximum degree \cite{DvorakMohar2010}. Work has also been done maximizing the spectral radius of graphs on surfaces of higher genus \cite{EllinghamZha2000, Hong1995, Hong1998}. We would also like to note that it is claimed in \cite{EllinghamZha2000} that Guiduli and Hayes proved Conjecture \ref{planar conjecture} for sufficiently large $n$. However, this preprint has never appeared, and the authors could not be reached for comment on it.

Conjecture \ref{outerplanar conjecture} appears in \cite{CvetkovicRowlinson1990}, where the authors mention that it is related to the study of various subfamilies of Hamiltonian graphs. Rowlinson \cite{Rowlinson1990} made partial progress on this conjecture, which was also worked on by Cao and Vince \cite{CaoVince1993} and Zhou--Lin--Hu \cite{ZhouLinHu2001}.

Various measures of graph irregularity have been proposed and studied (cf \cite{Albertson1997, Bell1992, CioabaGregory2007, Nikiforov2006} and references therein). These measures capture different aspects of graph irregularity and are incomparable in general. Because of this, a way to understand which graph properties each invariant gauges is to look at the extremal graph. For several of the measures, the graph of maximal irregularity with respect to that measure has been determined \cite{Bell1992, BrightwellWinkler1990, HansenMelot2002, TaitTobin2015}. One such invariant is the spectral radius of the graph minus its average degree, and Conjecture \ref{pineapple conjecture} proposes that the extremal connected graph is a pineapple graph.

\subsection{Notation and preliminaries}

Let $G$ be a connected graph and $A$ the adjacency matrix of $G$. For sets $X,Y\subset V(G)$ we will let $e(X)$ be the number of edges in the subgraph induced by $X$ and $e(X,Y)$ be the number of edges with one endpoint in $X$ and one endpoint in $Y$. For a vertex $v\in V(G)$, we will use $N(v)$ to denote the neighborhood of $v$ and $d_v$ to denote the degree of $v$. For graphs $G$ and $H$, $G+H$ will denote their join.

 Let $\lambda_1 \geq \lambda_2 \geq \cdots \geq \lambda_n$ be the eigenvalues of $A$, and let $\textbf{v}$ be an eigenvector corresponding to $\lambda_1$. By the Perron--Frobenius Theorem, $\textbf{v}$ has all positive entries, and it will be convenient for us to normalize so that the maximum entry is $1$. For a vertex $u\in V(G)$, we will use $\mathbf{v}_u$ to denote the eigenvector entry of $\mathbf{v}$ corresponding to $u$. With this notation, for any $u\in V(G)$, the eigenvector equation becomes
\begin{equation}\label{eigenvector equation}
\lambda_1 \mathbf{v}_u = \sum_{w\sim u} \mathbf{v}_w.
\end{equation}

Throughout the paper, we will use $x$ to denote the vertex with maximum eigenvector entry equal to $1$. If there are multiple such vertices, choose and fix $x$ arbitrarily among them. Since $x=1$, \eqref{eigenvector equation} applied to $x$ becomes
\begin{equation}\label{eigenvector equation for x}
\lambda_1 = \sum_{y\sim x} \mathbf{v}_y.
\end{equation}

Note that this implies $\lambda_1\leq d_x$. The next inequality is a simple consequence of our normalization and an easy double counting argument, but will be used extensively throughout the paper and warrants special attention. We note that this reasoning has been used previously, first by Favaron, Mah\'eo, and Sacl\'e \cite{FavaronMaheoSacle1993}. Multiplying both sides of \eqref{eigenvector equation for x} by $\lambda_1$ and applying \eqref{eigenvector equation} gives
\begin{equation}\label{bound on lambda squared}
\lambda_1^2 = \sum_{y\sim x} \sum_{z\sim y} \mathbf{v}_z = \sum_{y\sim x} \sum_{\substack{z\sim y\\ z\in N(x)}} \mathbf{v}_z + \sum_{y\sim x}\sum_{\substack{z\sim y \\ z\not\in N(x)}} \mathbf{v}_z \leq 2 e\left(N(x)\right) + e\left(N(x), V(G) \setminus N(x), \right)
\end{equation}
where the last inequality follows because each eigenvector entry is at most $1$, and because each eigenvector entry appears at the end of a walk of length $2$ from $x$: each edge with both endpoints in $N(x)$ is the second edge of a walk of length $2$ from $x$ exactly twice and each edge with only one endpoint in $N(x)$ is the second edge of a walk of length $2$ from $x$ exactly once. 

We will also use the Rayleigh quotient characterization of $\lambda_1$:
\begin{equation}\label{rayleigh quotient}
\lambda_1 = \max_{\textbf{z}\not= \textbf{0}} \frac{\textbf{z}^t A \textbf{z}}{\textbf{z}^t \textbf{z}}.
\end{equation}

In particular, this definition of $\lambda_1$ and the Perron--Frobenius Theorem imply that if $H$ is a strict subgraph of $G$, then $\lambda_1(A(G)) > \lambda_1(A(H))$. Another consequence of \eqref{rayleigh quotient} that we use frequently is that $\lambda_1 \geq \frac{2m}{n}$, the average degree of $G$.


\subsection{Applying \eqref{bound on lambda squared}}
Our three main results begin by using \eqref{bound on lambda squared} to deduce structural properties about the corresponding extremal graphs. To illustrate this technique, in this subsection we use \eqref{bound on lambda squared} to give short proofs of two old results. We include this as a quick way for the reader to become aquainted with our notation and how we will use \eqref{bound on lambda squared}. 

\begin{theorem}[Mantel's Theorem]
Let $G$ be a triangle-free graph on $n$ vertices. Then $G$ contains at most $\lfloor n^2/4\rfloor$ edges. Equality occurs if and only if $G = K_{\lfloor n/2 \rfloor \lceil n/2 \rceil}$.
\end{theorem}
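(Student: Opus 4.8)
The plan is to run the method the paper advertises: bound $\lambda_1$ from above via \eqref{bound on lambda squared} and from below via the average-degree bound $\lambda_1\ge 2m/n$, and then pinch. Set $m=e(G)$ and let $x$ be the Perron vertex (if $G$ is disconnected, replace $G$ by the component realizing $\lambda_1(G)$ for the upper-bound step, and keep the global average-degree bound for the lower-bound step). Since $G$ is triangle-free, no two neighbours of $x$ are adjacent, so $N(x)$ is an independent set; hence $e(N(x))=0$ and every edge meeting $N(x)$ leaves it, so $e(N(x),V(G)\setminus N(x))=m-e(V(G)\setminus N(x))\le m$. Substituting into \eqref{bound on lambda squared} gives
\[
\lambda_1^2\ \le\ 2e(N(x))+e(N(x),V(G)\setminus N(x))\ =\ m-e(V(G)\setminus N(x))\ \le\ m .
\]
Combined with $\lambda_1\ge 2m/n$ this yields $4m^2/n^2\le m$, i.e.\ $m\le n^2/4$, and since $m$ is an integer, $m\le\lfloor n^2/4\rfloor$.

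For the equality case, suppose $m=\lfloor n^2/4\rfloor$. Feeding $\lambda_1\ge 2m/n$ back into the display above forces $e(V(G)\setminus N(x))\le m-4m^2/n^2$, and a one-line computation (splitting on the parity of $n$: this quantity is $0$ for $n$ even and $\tfrac{n^2-1}{4n^2}$ for $n$ odd) shows the right-hand side is strictly less than $1$. As $e(V(G)\setminus N(x))$ is a non-negative integer it must vanish, so $V(G)\setminus N(x)$ is independent too; hence $G$ is bipartite with parts $N(x)$ and $V(G)\setminus N(x)$. Finally, a bipartite graph whose sides have sizes $t$ and $n-t$ has at most $t(n-t)\le\lfloor n^2/4\rfloor$ edges, with equality only when it is complete bipartite and $\{t,n-t\}=\{\lceil n/2\rceil,\lfloor n/2\rfloor\}$; since $m=\lfloor n^2/4\rfloor$ this gives $G=K_{\lceil n/2\rceil,\lfloor n/2\rfloor}$. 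The converse is immediate, as $K_{\lceil n/2\rceil,\lfloor n/2\rfloor}$ is triangle-free with $\lfloor n^2/4\rfloor$ edges.

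I do not anticipate a genuine obstacle here; this is a warm-up whose purpose is to exhibit the technique. The two places that want a little care are applying \eqref{bound on lambda squared} correctly when $G$ is disconnected (pass to the component achieving $\lambda_1(G)$, and observe that at equality the other components are forced to be isolated vertices, hence absent), and checking that $m-4m^2/n^2<1$ strictly in the odd case, so that the characterization of equality does not have to separately accommodate the non-regular extremal graph $K_{(n+1)/2,(n-1)/2}$.
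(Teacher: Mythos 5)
Your proof is correct and follows the paper's own approach: triangle-freeness gives $e(N(x))=0$, and \eqref{bound on lambda squared} is pinched against $\lambda_1\ge 2m/n$. The only variation is that you bound the cross term by $m$ (yielding $\lambda_1^2\le m$, and at equality forcing $V(G)\setminus N(x)$ to be independent so that $G$ is bipartite), whereas the paper bounds it by $\lceil n/2\rceil\lfloor n/2\rfloor$ and reads off the complete bipartite subgraph directly; your write-up is, if anything, a little more careful about disconnected $G$ and the odd-$n$ integrality step, both of which the paper leaves implicit.
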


\begin{proof}
If $G$ is triangle-free, then $e(N(x)) = 0$. Using $\lambda_1 \geq \frac{2m}{n}$ and \eqref{bound on lambda squared} gives
\[
\frac{4(e(G))^2}{n^2} \leq e(N(x), V(G) \setminus N(x)) \leq \left \lceil \frac{n}{2}\right\rceil \left\lfloor \frac{n}{2}\right\rfloor.
\]

Equality may occur only if $e(N(x), V(G)\setminus N(x)) = \lfloor n^2/4 \rfloor$. The only bipartite graph with this many edges is $ K_{\lfloor n/2 \rfloor \lceil n/2 \rceil}$, and thus $ K_{\lfloor n/2 \rfloor \lceil n/2 \rceil}$ is a subgraph of $G$. But $G$ is triangle-free, and so $G =  K_{\lfloor n/2 \rfloor \lceil n/2 \rceil}$.

\end{proof}

We note that one can attempt to use a similar argument to prove Tur\'an's theorem for $\mathrm{ex}(n, K_r)$, but because of the presence of the term $(e(G))^2$, one must use the integrality of $e(G)$ to deduce the result, and this approach fails when $r$ gets larger than a small constant.

\begin{theorem}[Stanley's Bound \cite{Stanley1987}]
Let $G$ have $m$ edges. Then 
\[
\lambda_1 \leq \frac{1}{2} \left( -1 + \sqrt{1+8m}\right).
\]
Equality occurs if and only if $G$ is a clique and isolated vertices.
\end{theorem}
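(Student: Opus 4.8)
The plan is to combine the two elementary inequalities from the preliminaries: $\lambda_1 \le d_x$, coming from \eqref{eigenvector equation for x}, and $\lambda_1^2 \le 2e(N(x)) + e(N(x), V(G)\setminus N(x))$, coming from \eqref{bound on lambda squared}. First I would reduce to the connected case: the adjacency spectral radius of $G$ equals that of some connected component $H$, which has at most $m$ edges, and $t \mapsto \frac{1}{2}(-1+\sqrt{1+8t})$ is increasing, so it suffices to prove the bound for connected graphs (and this is also what lets us invoke the Perron--Frobenius setup fixing $x$ with $\mathbf{v}_x = 1$).

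Assume then $G$ is connected, and write $N[x] := N(x)\cup\{x\}$. The key observation is that the right-hand side of \eqref{bound on lambda squared} is exactly $\sum_{y\in N(x)} d_y$, since an edge with both endpoints in $N(x)$ is counted twice and an edge with exactly one endpoint in $N(x)$ is counted once. Adding the inequality $\lambda_1\le d_x$ to $\lambda_1^2 \le \sum_{y\in N(x)} d_y$ gives
$\lambda_1^2 + \lambda_1 \le \sum_{y\in N[x]} d_y = 2e(N[x]) + e(N[x], V(G)\setminus N[x])$.
On the other hand $2m = 2e(N[x]) + 2e(N[x], V(G)\setminus N[x]) + 2e(V(G)\setminus N[x])$, so the displayed quantity is at most $2m$. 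Hence $\lambda_1^2 + \lambda_1 - 2m \le 0$, and solving the quadratic yields $\lambda_1 \le \frac{1}{2}(-1+\sqrt{1+8m})$.

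For the equality case, equality must hold in each inequality used. Equality in $\sum_{y\in N[x]} d_y = 2m$ forces $e(N[x], V(G)\setminus N[x]) = 0$ and $e(V(G)\setminus N[x]) = 0$; together with connectivity this forces $V(G) = N[x]$, i.e.\ every vertex is adjacent to $x$ or equal to $x$. Equality in $\lambda_1 \le d_x$ forces $\mathbf{v}_y = 1$ for every $y\in N(x)$, so in fact $\mathbf{v} = \mathbf{1}$; then the eigenvalue equation says every vertex has degree $\lambda_1$, so $G$ is $\lambda_1$-regular on, say, $n$ vertices. Now $\lambda_1 = 2m/n$ and $\lambda_1^2 + \lambda_1 = 2m = n\lambda_1$ give $n = \lambda_1 + 1$, and a $\lambda_1$-regular graph on $\lambda_1 + 1$ vertices is $K_{\lambda_1+1}$. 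Undoing the reduction to a component, $G$ is a clique together with isolated vertices; conversely one checks directly that $K_{d+1}$ (plus any number of isolated vertices) satisfies $\lambda_1 = d = \frac{1}{2}(-1+\sqrt{1+8m})$, so these are exactly the extremal graphs.

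The inequality itself is a short and direct consequence of \eqref{bound on lambda squared}; I expect the only place demanding care to be the equality analysis — in particular, correctly deducing $V(G) = N[x]$ and then $\mathbf{v} = \mathbf{1}$ and the regularity of $G$ from the combined equality conditions, and not forgetting that in the disconnected case the extremal graph may carry extra isolated vertices.
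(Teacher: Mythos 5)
Your proof is correct and is essentially the paper's argument: both combine \eqref{bound on lambda squared} with $\lambda_1 \le d_x$ to obtain $\lambda_1^2 + \lambda_1 \le 2m$ (the paper writes the bound as $\lambda_1^2 \le 2(m-d_x)+d_x$, you as $\lambda_1^2+\lambda_1\le\sum_{y\in N(x)\cup\{x\}} d_y \le 2m$), and then solve the quadratic. The only real divergence is in the equality analysis, where the paper exploits the arbitrariness of the choice of $x$ among maximum-entry vertices, while you use connectivity to force $V(G)=N(x)\cup\{x\}$, then $\mathbf{v}=\mathbf{1}$, regularity, and completeness --- both routes are valid, and your explicit reduction to a connected component is a detail the paper leaves implicit.
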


\begin{proof}
Using \eqref{bound on lambda squared} gives
\[
\lambda_1^2 = \sum_{x\sim y}\sum_{\substack{y\sim z \\ z\not = x}} \mathbf{v}_z +\sum_{x\sim y} 1 \leq 2(m-d_x) + d_x \leq 2m-\lambda_1,
\]
where the last inequality holds because $\lambda_1\leq d_x$. The result follows by the quadratic formula. Examining \eqref{bound on lambda squared} shows that equality holds if and only if $E(G)$ is contained in the closed neighborhood of $x$, $d_x = \lambda_1$, and for each $y\sim x$, $\mathbf{v}_y=1$. Since $x$ was chosen arbitrarily amongst vertices of eigenvector entry $1$, any vertex of eigenvector entry $1$ must contain $E(G)$ in its closed neighborhood. Thus $G$ is a clique plus isolated vertices.
\end{proof}

\subsection{Outline of the paper}
Section \ref{planar} contains our strongest result, the proof of Conjecture \ref{planar conjecture}. In Section \ref{outerplanar} we prove Conjecture \ref{outerplanar conjecture} and in Section \ref{pineapple} we prove Conjecture \ref{pineapple conjecture}.

\section{Outerplanar graphs of maximum spectral radius}\label{outerplanar}
Let $G$ be a graph. As before, let the first eigenvector of the adjacency matrix of $G$ be $\textbf{v}$ normalized so that maximum entry is $1$.  Let $x$ be a vertex with maximum eigenvector entry, i.e. $\mathbf{v}_x=1$.  Throughout let $G$ be an outerplanar graph on $n$ vertices with maximal adjacency spectral radius.  $\lambda_1$ will refer to $\lambda_1(A(G))$. 

Two consequences of $G$ being outerplanar that we will use frequently are that $G$ has at most $2n-3$ edges and $G$ does not contain $K_{2,3}$ as a subgraph. An outline of our proof is as follows. We first show that there is a single vertex of large degree and that the remaining vertices have small eigenvector entry (Lemma \ref{small eigvec}). We use this to show that the vertex of large degree must be adjacent to every other vertex (Lemma \ref{bound bad elements}). From here it is easy to prove that $G$ must be $K_1+P_{n-1}$.

\begin{figure}[]
\begin{center}
\begingroup

\setlength{\unitlength}{.01cm}
{
\setlength{\fboxsep}{10pt}
\framebox[1.5\width]{
\begin{tikzpicture}[rotate=90, scale=1.3]

\coordinate (W) at (1.553380,0.70606);

\coordinate (W1) at ($(W) + 0*(0,-0.7)$);
\coordinate (W2) at ($(W) + 1*(0,-0.7)$);
\coordinate (W3) at ($(W) + 2*(0,-0.7)$);
\coordinate (W4) at ($(W) + 3*(0,-0.7) + (0,-0.25)$);
\coordinate (W5) at ($(W) + 4*(0,-0.7) + (0,-0.25)$);
\coordinate (W6) at ($(W) + 5*(0,-0.7) + (0,-0.25)$);

\coordinate (B) at ($0.5*(W3) + 0.5*(W4) + (-1.0,0)$);

\draw (W1) -- (W2);
\draw (W2) -- (W3);
\draw [dashed] (W3) -- (W4);
\draw (W4) -- (W5);
\draw (W5) -- (W6);

\draw (B) -- (W1);
\draw (B) -- (W2);
\draw (B) -- (W3);
\draw (B) -- (W4);
\draw (B) -- (W5);
\draw (B) -- (W6);

\filldraw[blue] (W) circle (0.07cm);
\filldraw[blue] (W1) circle (0.07cm);
\filldraw[blue] (W2) circle (0.07cm);
\filldraw[blue] (W3) circle (0.07cm);
\filldraw[blue] (W4) circle (0.07cm);
\filldraw[blue] (W5) circle (0.07cm);
\filldraw[blue] (W6) circle (0.07cm);

\filldraw[blue] (B) circle (0.07cm);

\end{tikzpicture}}
}
\endgroup
\end{center}
\caption{The graph $P_1 + P_{n-1}$.
   \label{fig-pn4}}
\end{figure}
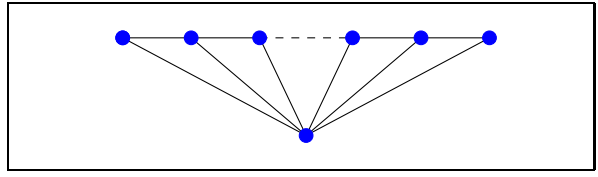

We begin with an easy lemma that is clearly not optimal, but suffices for our needs.
\begin{lemma}\label{trivial lambda bound}
 $\lambda_1 > \sqrt{n-1}$.
\end{lemma}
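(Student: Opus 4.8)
The plan is to exhibit an explicit subgraph of $G$ whose spectral radius exceeds $\sqrt{n-1}$, and then invoke the Perron--Frobenius/Rayleigh monotonicity fact quoted in the preliminaries: if $H$ is a subgraph of $G$ then $\lambda_1(A(G)) \geq \lambda_1(A(H))$, with strict inequality if $H$ is a proper subgraph. Since $G$ is the outerplanar graph on $n$ vertices of \emph{maximum} spectral radius, it suffices to find one outerplanar graph on $n$ vertices with spectral radius at least $\sqrt{n-1}$; then $\lambda_1(G)$ is at least that large, and a little care gives the strict inequality.

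The natural candidate is the star $K_{1,n-1}$ (equivalently a subgraph of the conjectured extremal graph $K_1 + P_{n-1}$), which is outerplanar and has spectral radius exactly $\sqrt{n-1}$. First I would note $K_{1,n-1} \subseteq K_1 + P_{n-1}$, and $K_1 + P_{n-1}$ is outerplanar on $n$ vertices, so $\lambda_1(G) \geq \lambda_1(K_1 + P_{n-1})$. Then, since $K_{1,n-1}$ is a \emph{proper} subgraph of $K_1 + P_{n-1}$ (the path edges among the $n-1$ leaves are missing, and $n-1 \geq 2$), strict monotonicity from \eqref{rayleigh quotient} gives $\lambda_1(K_1 + P_{n-1}) > \lambda_1(K_{1,n-1}) = \sqrt{n-1}$, hence $\lambda_1 > \sqrt{n-1}$. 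Alternatively, one can avoid even naming $K_1+P_{n-1}$: just observe $K_{1,n-1}$ itself is outerplanar on $n$ vertices, so by maximality $\lambda_1 \geq \sqrt{n-1}$, and then rule out equality by noting that the extremal graph cannot equal $K_{1,n-1}$ (for instance because adding any single edge among the leaves keeps it outerplanar and strictly increases the spectral radius, so $K_{1,n-1}$ is not extremal).

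To keep the argument self-contained I would include the one-line computation that $\lambda_1(K_{1,m}) = \sqrt{m}$: the nonzero eigenvalues of a bipartite graph come in $\pm$ pairs, $\mathrm{tr}(A^2) = 2m$ counts twice the edges, and $K_{1,m}$ has rank-two adjacency matrix, so its eigenvalues are $\sqrt{m}, 0, \dots, 0, -\sqrt{m}$; alternatively, the eigenvector assigning $1$ to the center and $1/\sqrt{m}$ to each leaf satisfies \eqref{eigenvector equation} with eigenvalue $\sqrt{m}$.

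I do not anticipate a genuine obstacle here — this is a warm-up lemma and the bound is deliberately crude ("clearly not optimal"). The only point requiring a sentence of care is justifying the \emph{strict} inequality rather than just $\geq$; this follows from the strict version of Rayleigh monotonicity for connected graphs stated in the preliminaries, applied to the proper containment $K_{1,n-1} \subsetneq G$ (which holds because $G$, being extremal, properly contains a spanning star, or simply because $G \neq K_{1,n-1}$).
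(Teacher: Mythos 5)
Your proof is correct and follows essentially the same route as the paper: the paper likewise observes that the star $K_{1,n-1}$ is outerplanar with spectral radius $\sqrt{n-1}$ but is a strict subgraph of other outerplanar graphs on the same vertex set, so by maximality and strict Rayleigh/Perron--Frobenius monotonicity $\lambda_1(G) > \sqrt{n-1}$. Your extra care about where the strict inequality comes from, and the computation of $\lambda_1(K_{1,m})$, are fine but not needed beyond what the paper already says.
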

\begin{proof}
 The star $K_{1,n-1}$ is outerplanar, and cannot be the maximal outerplanar
 graph with respect to spectral radius because it is a strict subgraph of other outerplanar graphs on the same vertex set.  Hence, $\lambda_1(G) > \lambda_1(K_{1,n}) = \sqrt{n-1}$.
\end{proof}

\begin{lemma}
 For any vertex $u$, we have $d_u > \mathbf{v}_un - 11\sqrt{n}$.
\end{lemma}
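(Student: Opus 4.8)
The plan is to apply the eigenvector identity twice at the vertex $u$ and then control the resulting sum of eigenvector entries using only two crude structural facts about $G$: that $e(G)\le 2n-3$ and that $G$ contains no $K_{2,3}$, together with the bound $\lambda_1>\sqrt{n-1}$ from Lemma \ref{trivial lambda bound}. The underlying point is that in an outerplanar graph both $\lambda_1$ and $\sum_z\mathbf{v}_z$ are essentially governed by $e(G)=O(n)$, so the ``second moment'' $\lambda_1^2\mathbf{v}_u$ cannot exceed $d_u$ by more than a lower-order term.

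Concretely, I would first write, by applying \eqref{eigenvector equation} twice starting from $u$ (exactly as in the computation preceding \eqref{bound on lambda squared}, but without the final inequality), the identity $\lambda_1^2\mathbf{v}_u=\sum_{w\sim u}\sum_{z\sim w}\mathbf{v}_z$. Isolating the length-two walks from $u$ that return to $u$ — there are exactly $d_u$ of them, each contributing $\mathbf{v}_u$ — this becomes
\[
(\lambda_1^2-d_u)\,\mathbf{v}_u=\sum_{w\sim u}\sum_{\substack{z\sim w\\ z\neq u}}\mathbf{v}_z=\sum_{z\neq u}|N(u)\cap N(z)|\,\mathbf{v}_z.
\]
Since $G$ is outerplanar it contains no $K_{2,3}$, so no two vertices have three common neighbours; hence $|N(u)\cap N(z)|\le 2$ for all $z$, and the right-hand side is at most $2\sum_z\mathbf{v}_z$.

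The next step is to bound $\sum_z\mathbf{v}_z$. Summing \eqref{eigenvector equation} over all vertices gives $\lambda_1\sum_z\mathbf{v}_z=\sum_w d_w\mathbf{v}_w\le\sum_w d_w=2e(G)\le 2(2n-3)$, using the normalization $\mathbf{v}_w\le 1$. Combining with Lemma \ref{trivial lambda bound} yields $\sum_z\mathbf{v}_z<4n/\sqrt{n-1}$, hence $(\lambda_1^2-d_u)\mathbf{v}_u<8n/\sqrt{n-1}$. To finish: if $d_u\ge n-1$ the claimed inequality is immediate because $\mathbf{v}_u\le 1$; otherwise $\lambda_1^2>n-1>d_u$, so $(n-1-d_u)\mathbf{v}_u<8n/\sqrt{n-1}$, which rearranges (using $d_u\mathbf{v}_u\le d_u$ and $\mathbf{v}_u\le1$) to $n\mathbf{v}_u<d_u+1+8n/\sqrt{n-1}$, and for $n$ large $1+8n/\sqrt{n-1}<11\sqrt{n}$, giving $d_u>\mathbf{v}_un-11\sqrt{n}$. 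There is no deep obstacle here; the only points needing care are that $\lambda_1^2-d_u$ is not obviously positive — handled by separating off the trivial case $d_u\ge n-1$ — and the bookkeeping of the constant, which is comfortable since $8n/\sqrt{n-1}\to 8\sqrt{n}$ as $n\to\infty$.
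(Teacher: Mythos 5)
Your proof is correct and follows essentially the same route as the paper: both expand $\lambda_1^2\mathbf{v}_u$ as a sum over walks of length two from $u$, use $K_{2,3}$-freeness to bound the relevant codegrees by $2$, and control the leftover eigenvector mass by $2e(G)/\lambda_1 = O(\sqrt{n})$ via Lemma~\ref{trivial lambda bound}. The only difference is bookkeeping: the paper absorbs the $N(u)$-internal contribution into a $2\lambda_1\mathbf{v}_u$ term on the left-hand side, while you bound everything uniformly by $2\sum_z \mathbf{v}_z$ and handle the sign of $\lambda_1^2-d_u$ with a harmless case split.
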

\begin{proof}
 Let $A$ be the neighborhood of $u$, and let $B = V(G) \setminus (A\cup \{u\})$.  We have 
  \[ \lambda_1^2 \mathbf{v}_u = \sum_{y \sim u} \sum_{z \sim y} \mathbf{v}_z \leq d_u + \sum_{y \sim u} \sum_{z \in N(y)\cap A} \mathbf{v}_z + \sum_{y \sim u} \sum_{z \in N(y)\cap B} z. \]

\noindent By outerplanarity, each vertex in $A$ has at most two neighbors in $A$, otherwise
$G$ would contain a $K_{2,3}$.  In particular,
 \[ \sum_{y \sim u} \sum_{z \in N(y)\cap A} \mathbf{v}_z  \leq 2 \sum_{y \sim u} \mathbf{v}_y = 2\lambda_1 \mathbf{v}_u. \]
Similarly, each vertex in $B$ has at most $2$ neighbors in $A$.  So
 \[ \sum_{y \sim u} \sum_{z \in N(y)\cap B} \mathbf{v}_z \leq 2 \sum_{z \in B} \mathbf{v}_z \leq \frac{2}{\lambda_1} \sum_{z \in B} d_z \leq \frac{4e(G)}{\lambda_1} \leq \frac{4(2n-3)}{\lambda_1}, \]
as  $e(G) \leq 2n-3$ by outerplanarity.  So, using Lemma~\ref{trivial lambda bound} we have
 \[ \sum_{y \sim u} \sum_{z \in N(y)\cap B} \mathbf{v}_z < 8 \sqrt{n}.\]
Combining the above inequalities yields
 \[ \lambda_1^2 \mathbf{v}_u - 2\lambda_1 \mathbf{v}_u < d_u + 8 \sqrt{n}.\]
Again using Lemma~\ref{trivial lambda bound} we get
 \[ \mathbf{v}_u n - 11\sqrt{n} <  (n-1 - 2\sqrt{n-1}) \mathbf{v}_u - 8 \sqrt{n} < d_u .\]
\end{proof}

\begin{lemma}\label{small eigvec}
 We have $d_x > n - 11 \sqrt{n}$ and for every other vertex $u$, $\mathbf{v}_u < C_1 / \sqrt{n}$ for some absolute constant $C_1$, for $n$ sufficiently large.
\end{lemma}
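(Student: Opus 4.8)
The plan is to bootstrap from the previous (unnumbered) lemma, which says $d_u > \mathbf{v}_u n - 11\sqrt{n}$ for every vertex $u$. Applying this to $u = x$ and using $\mathbf{v}_x = 1$ immediately gives $d_x > n - 11\sqrt{n}$, which is the first half of the statement; no further work is needed there. The content is in the second half: showing every vertex $u \neq x$ has $\mathbf{v}_u = O(1/\sqrt{n})$.

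First I would get a cheap upper bound on $\lambda_1$ to complement Lemma~\ref{trivial lambda bound}. Since $G$ is outerplanar, $e(G) \le 2n-3$, so $\lambda_1 \le \sqrt{2e(G)} \le \sqrt{4n}$ (or one can use $\lambda_1 \le \sqrt{2m - n + 1}$ or simply $\lambda_1^2 \le 2e(G)$ from the Rayleigh quotient applied appropriately); in any case $\lambda_1 < 2\sqrt{n}$ for $n$ large. Combined with $\lambda_1 > \sqrt{n-1}$, we have $\lambda_1 = \Theta(\sqrt n)$. Now fix $u \neq x$. The idea is to estimate $\lambda_1^2 \mathbf{v}_u = \sum_{y \sim u}\sum_{z \sim y} \mathbf{v}_z$ from above more carefully than in the previous lemma — the previous lemma's bound $d_u + 2\lambda_1 \mathbf{v}_u + 8\sqrt n$ is not by itself strong enough, because $d_u$ could a priori be large. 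The key extra input is outerplanarity applied to the pair $\{u, x\}$: since $G$ has no $K_{2,3}$, the vertices $u$ and $x$ have at most two common neighbors, so $|N(u) \cap N(x)| \le 2$. Since $d_x > n - 11\sqrt n$, this forces $|N(u) \setminus N(x)| \le n - d_x + O(1) < 11\sqrt n + O(1)$, and hence $d_u < |N(u)\cap N(x)| + |N(u)\setminus N(x)| < 12\sqrt n$ for $n$ large. (More precisely $d_u \le 2 + (n - 1 - d_x) + [\,x \in N(u)\,] < 12\sqrt n$.)

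With $d_u = O(\sqrt n)$ in hand, I would revisit the walk count. We have
\[
\lambda_1^2 \mathbf{v}_u = \sum_{y \sim u}\sum_{z \sim y}\mathbf{v}_z \le d_u + 2\lambda_1 \mathbf{v}_u + 8\sqrt n,
\]
reusing verbatim the three estimates from the previous lemma's proof (the $d_u$ term from the $z = y$-back-to-... wait, from $z=u$ contributions bounded by $d_u$, the $\le 2\lambda_1\mathbf{v}_u$ term from at most two neighbors inside $N(u)$, and the $< 8\sqrt n$ term from $B$). Now plug in $d_u < 12\sqrt n$ and $\lambda_1 > \sqrt{n-1}$ to get $(n-1)\mathbf{v}_u < \lambda_1^2 \mathbf{v}_u < 2\lambda_1 \mathbf{v}_u + 20\sqrt n$, i.e. $\mathbf{v}_u (n - 1 - 2\lambda_1) < 20\sqrt n$. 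Since $\lambda_1 < 2\sqrt n$, for $n$ large $n - 1 - 2\lambda_1 > n/2$, so $\mathbf{v}_u < 40/\sqrt n$, giving the claim with, say, $C_1 = 40$.

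The main obstacle — and the step that does the real work — is establishing $d_u = O(\sqrt n)$ for $u \neq x$; everything after that is the same double-counting already carried out. The crux is the observation that the $K_{2,3}$-freeness forces $|N(u) \cap N(x)| \le 2$, which combined with the near-universal degree of $x$ caps $d_u$. One should double-check the edge cases (whether $u \sim x$, and the exact $O(1)$ additive constants) but these only affect the absolute constant $C_1$, not the $1/\sqrt n$ rate.
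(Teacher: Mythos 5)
Your proposal is correct and follows essentially the same route as the paper: both proofs get $d_x > n-11\sqrt n$ from the preceding lemma, use $K_{2,3}$-freeness together with the near-universal degree of $x$ to force $d_u = O(\sqrt n)$ for $u\neq x$, and then convert the degree bound into $\mathbf{v}_u = O(1/\sqrt n)$. The only (harmless) difference is that you re-run the walk-counting estimate to make this last conversion, whereas the paper simply plugs $d_u < 12\sqrt n$ into the already-proved inequality $d_u > \mathbf{v}_u n - 11\sqrt n$ to read off $\mathbf{v}_u < 23/\sqrt n$ directly.
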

\begin{proof}
The bound on $d_x$ follows immediately from the previous lemma and the normalization that
$\mathbf{v}_x=1$.  Now consider any other vertex $u$.  We know that $G$ contains no $K_{2,3}$, so $d_u < 12 \sqrt{n}$, otherwise $u$ and $x$ share $\sqrt{n}$ neighbors, which
yields a $K_{2,3}$ if $n \geq 9$.  So
 \[ 12 \sqrt{n} > d_u > \mathbf{v}_u n - 11\sqrt{n}, \]
that is, $\mathbf{v}_u < 23 / \sqrt{n}$.
\end{proof}

\begin{lemma}\label{bound bad elements}
 Let $B = V(G) \setminus (N(x) \cup \{x\})$.  Then
  \[ \sum_{z \in B} \mathbf{v}_z < C_2 / \sqrt{n} \]
 for some absolute constant $C_2$.
\end{lemma}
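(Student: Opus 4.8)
The plan is to sum the eigenvector equation \eqref{eigenvector equation} over the vertices of $B$ and exploit that, by definition of $B$, no vertex of $B$ is adjacent to $x$; hence every neighbor of a vertex of $B$ lies in $B \cup N(x)$. This yields
\[
\lambda_1 \sum_{z \in B} \mathbf{v}_z \;=\; \sum_{z \in B} \sum_{w \sim z} \mathbf{v}_w \;=\; \sum_{z \in B} \sum_{\substack{w \sim z \\ w \in N(x)}} \mathbf{v}_w \;+\; \sum_{z \in B} \sum_{\substack{w \sim z \\ w \in B}} \mathbf{v}_w .
\]
I would bound the two sums on the right separately, showing each is $O(\sqrt n)$, and then divide by $\lambda_1 > \sqrt{n-1}$ (Lemma \ref{trivial lambda bound}) to get $\sum_{z \in B}\mathbf{v}_z = O(1/\sqrt n)$, which is the desired conclusion.

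For the first sum: each $z \in B$ has at most two neighbors in $N(x)$, because three common neighbors of $x$ and $z$ would form a $K_{2,3}$, impossible for an outerplanar graph when $n \geq 9$. By Lemma \ref{small eigvec} every such neighbor $w$ satisfies $\mathbf{v}_w < C_1/\sqrt n$, and $|B| = n - 1 - d_x < 11\sqrt n$ by the same lemma; thus the first sum is less than $2 |B| \cdot C_1/\sqrt n < 22 C_1$. For the second sum, observe that it counts $\mathbf{v}_z + \mathbf{v}_w$ once for each edge $zw$ with both ends in $B$, so it is at most $2 e(B) \max_{z \in B} \mathbf{v}_z$; since the subgraph induced by $B$ is outerplanar we have $e(B) < 22\sqrt n$, and each $\mathbf{v}_z < C_1/\sqrt n$, giving a bound less than $44 C_1$. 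Hence $\lambda_1 \sum_{z \in B} \mathbf{v}_z < 66 C_1$, and Lemma \ref{trivial lambda bound} gives $\sum_{z \in B} \mathbf{v}_z < 66 C_1/\sqrt{n-1} < C_2/\sqrt n$ for a suitable absolute constant $C_2$ and $n$ large.

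I do not expect a genuine obstacle here beyond bookkeeping, given Lemmas \ref{trivial lambda bound} and \ref{small eigvec}. The one step that must be handled with care is to estimate the $N(x)$-side neighbors $w$ using $\mathbf{v}_w < C_1/\sqrt n$ rather than the trivial $\mathbf{v}_w \le 1$: it is exactly this saving of a factor $1/\sqrt n$, together with $|B| = O(\sqrt n)$, that upgrades a bound of order $1$ to a bound of order $1/\sqrt n$ after the division by $\lambda_1$, which has order $\sqrt n$.
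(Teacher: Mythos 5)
Your proposal is correct and follows essentially the same route as the paper: both arguments apply the eigenvector equation on $B$, use that every neighbor of a vertex of $B$ is a non-$x$ vertex with entry at most $C_1/\sqrt n$, reduce to bounding $e(N(x),B)+2e(B)$ via the $K_{2,3}$-free property and outerplanarity of $B$ together with $|B|<11\sqrt n$, and divide by $\lambda_1>\sqrt{n-1}$. The only cosmetic difference is that the paper bounds each $\mathbf{v}_z$ individually before summing, whereas you sum the eigenvector equation first; the resulting computation is identical.
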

\begin{proof}
From the previous lemma, we have $|B| < 11 \sqrt{n}$.  Now
 \[ \sum_{z \in B} \mathbf{v}_z \leq \frac{1}{\lambda_1} \sum_{z \in B} \left(23 / \sqrt{n}\right) d_z = \frac{23}{\lambda_1 \sqrt{n}} \left( e(A,B) + 2 e(B)\right) . \]
Each vertex in $B$ is adjacent to at most two vertices in $A$, so $e(A,B) \leq 2 |B| < 22 \sqrt{n}$.  The graph induced on $B$ is outerplanar, so
$e(B) \leq 2|B| - 3 < 22 \sqrt{n}$.  Finally, using the fact that $\lambda_1 > \sqrt{n-1}$, we get the required result.
\end{proof}

\begin{theorem}
 For sufficiently large $n$, $G$ is the graph $K_1 + P_{n-1}$, where $+$ represents the graph join operation.
\end{theorem}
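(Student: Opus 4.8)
The plan is to show first that $x$ is adjacent to every other vertex, and then to read off $G$ from the structure of $G-x$. Set $B=V(G)\setminus(N(x)\cup\{x\})$ as in Lemma~\ref{bound bad elements}; the main step is to prove $B=\emptyset$. Suppose instead that some $z\in B$ exists. I would begin by strengthening the eigenvector estimate at $z$: since $z\not\sim x$ we have $N(z)\subseteq N(x)\cup(B\setminus\{z\})$, and $K_{2,3}$-freeness forces $|N(z)\cap N(x)|\le 2$ (the vertex $x$, the vertex $z$, and three common neighbours would span a $K_{2,3}$). Plugging Lemma~\ref{small eigvec} (each neighbour of $z$ lying in $N(x)$ has eigenvector entry $<C_1/\sqrt n$) and Lemma~\ref{bound bad elements} ($\sum_{w\in B}\mathbf{v}_w<C_2/\sqrt n$) into $\lambda_1\mathbf{v}_z=\sum_{w\sim z}\mathbf{v}_w$ gives $\lambda_1\mathbf{v}_z<(2C_1+C_2)/\sqrt n$, so by Lemma~\ref{trivial lambda bound} we obtain $\mathbf{v}_z<1/\lambda_1$ once $n$ is large.

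Next I would perform a local modification. Let $G'$ be obtained from $G$ by deleting every edge incident to $z$ and then adding the single edge $xz$; equivalently $G'$ is $G-z$ with a pendant vertex reattached at $x$, so $G'$ is outerplanar and still has $n$ vertices. Using the trial vector $\mathbf{w}$ that agrees with $\mathbf{v}$ off $z$ and has $\mathbf{w}_z=1/\lambda_1$, a direct computation with \eqref{rayleigh quotient} gives
\[
\frac{\mathbf{w}^{t}A(G')\mathbf{w}}{\mathbf{w}^{t}\mathbf{w}} = \lambda_1 + \frac{1/\lambda_1-\lambda_1\mathbf{v}_z^{2}}{\mathbf{w}^{t}\mathbf{w}} > \lambda_1,
\]
the inequality holding precisely because $\mathbf{v}_z<1/\lambda_1$. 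This contradicts the maximality of $\lambda_1=\lambda_1(G)$ over outerplanar graphs on $n$ vertices, so $B=\emptyset$; hence $x$ is adjacent to all of $V(G)\setminus\{x\}$, and we may write $G=K_1+H$ with $H=G-x$ on $n-1$ vertices.

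To finish I would analyse $H$. If some vertex of $H$ had three neighbours in $H$, that vertex together with $x$ would span a $K_{2,3}$ in $G$, so $\Delta(H)\le 2$ and $H$ is a disjoint union of paths and cycles. If $H$ contained a cycle $C$, then $K_1+C\subseteq G$, and contracting $C$ down to a triangle exhibits $K_4$ as a minor of $G$, which is impossible for an outerplanar graph; hence $H$ is a linear forest on $n-1$ vertices. Finally, every linear forest on $n-1$ vertices is isomorphic to a spanning subgraph of $P_{n-1}$, and it equals $P_{n-1}$ exactly when it is connected; so if $H\ne P_{n-1}$ then $G=K_1+H$ is a strict subgraph of the outerplanar graph $K_1+P_{n-1}$ on the same vertex set, forcing $\lambda_1(G)<\lambda_1(K_1+P_{n-1})$ and contradicting maximality. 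Therefore $G=K_1+P_{n-1}$.

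The step I expect to be the real obstacle is proving $B=\emptyset$: it needs both the strong pointwise bound $\mathbf{v}_z=O(1/n)$ for $z\in B$ (exactly what Lemmas~\ref{small eigvec} and~\ref{bound bad elements} provide) and the verification that the $\lambda_1$-increasing local modification preserves outerplanarity. Once $x$ is shown to dominate, the remainder is soft, using only the two excluded minors $K_4$ and $K_{2,3}$ together with the strict-subgraph monotonicity of $\lambda_1$.
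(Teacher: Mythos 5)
Your proposal is correct and follows essentially the same route as the paper: show $B=\emptyset$ via the same local modification (delete all edges at $z\in B$, reattach $z$ to $x$, and compare Rayleigh quotients using the bounds from Lemmas~\ref{small eigvec} and~\ref{bound bad elements}), then deduce that $G-x$ is a linear forest from $K_{2,3}$- and $K_4$-freeness and invoke maximality. The only cosmetic difference is your test vector $\mathbf{w}$ with $\mathbf{w}_z=1/\lambda_1$ in place of the paper's use of $\mathbf{v}$ itself; your condition $\mathbf{v}_z<1/\lambda_1$ is equivalent to the paper's $\sum_{w\sim z}\mathbf{v}_w<1$ via the eigenvector equation, and your Rayleigh-quotient identity checks out.
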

\begin{proof}
First we show that the set $B$ above is empty, i.e. $x$ is adjacent
to every other vertex.  If not, let $y \in B$.  Now $y$ is adjacent to at
most two vertices in $A$, and so by Lemma~\ref{small eigvec} and Lemma~\ref{bound bad elements}, 
 \[ \sum_{z \sim y} \mathbf{v}_z < \sum_{z \in B} \mathbf{v}_z + 2 C_1 / \sqrt{n} < (C_2 + 2 C_1) / \sqrt{n} < 1\]
when $n$ is large enough.  Let $G^+$ be the graph obtained
from $G$ by deleting all edges incident to $y$ and replacing them by the single edge $\{x,y\}$.  The resulting graph is outerplanar.  Then,
using the Rayleigh quotient,
 \[ \lambda_1(A^+) - \lambda_1(A) \geq \frac{\textbf{v}^t(A^+ - A)\textbf{v}}{\textbf{v}^t\textbf{v}} = \frac{2\mathbf{v}_y}{\textbf{v}^t\textbf{v}} \left(1 - \sum_{z \sim y} \mathbf{v}_z\right) > 0.\]
This contradicts the maximality of $G$.  Hence $B$ is empty.

Now $x$ is adjacent to every other vertex in $G$.  Hence every vertex other than $x$ has degree less than or equal to $3$.  Moreover, the graph induced by 
$V(G) \setminus \{x\}$ cannot contain any cycles, as then $G$ would not be outerplanar.
It follows that $G$ is a subgraph of $K_1 + P_{n-1}$, and maximality ensures that $G$ must be equal to $K_1 + P_{n-1}$.
\end{proof}

\section{Planar graphs of maximum spectral radius}\label{planar}

As before, let $G$ be a graph with first eigenvector normalized so that maximum entry is $1$, and let $x$ be a vertex with maximum eigenvector entry, i.e. $\mathbf{v}_x=1$. Let $m = |E(G)|$. For subsets $X, Y\subset V(G)$ we write $E(X)$ for the set of edges induced by $X$ and $E(X,Y)$ for the set of edges with one endpoint in $X$ and one endpoint in $Y$.  As before, we let $e(X,Y) = |E(X,Y)|$. We will often assume $n$ is large enough without saying so explicitly. Throughout the section, let $G$ be the planar graph on $n$ vertices with maximum spectral radius, and let $\lambda_1$ denote this spectral radius.

We will use frequently that $G$ has no $K_{3,3}$ as a subgraph, that $m\leq 3n-6$, and that any bipartite subgraph of $G$ has at most $2n-4$ edges. The outline of our proof is as follows. We first show that $G$ has two vertices that are adjacent to most of the rest of the graph (Lemmas \ref{planar trivial spectral bound}--\ref{second vertex of large degree}). We then show that the two vertices of large degree are adjacent (Lemma \ref{x connected to w}), and that they are adjacent to every other vertex (Lemma \ref{A empty}). The proof of the theorem follows readily.

\begin{figure}[]
\begin{center}
\begingroup

\setlength{\unitlength}{.01cm}
{
\setlength{\fboxsep}{10pt}
\framebox[1.5\width]{
\begin{tikzpicture}[rotate=90, scale=1.3]

\coordinate (U) at (0.553380,1.150606);
\coordinate (V) at (2.553380,1.150606);
\coordinate (W) at (1.553380,0.70606);

\coordinate (W1) at ($(W) + 0*(0,-0.7)$);
\coordinate (W2) at ($(W) + 1*(0,-0.7)$);
\coordinate (W3) at ($(W) + 2*(0,-0.7)$);
\coordinate (W4) at ($(W) + 3*(0,-0.7)$);
\coordinate (W5) at ($(W) + 4*(0,-0.7)$);
\coordinate (W6) at ($(W) + 5*(0,-0.9)$);

\draw (U) -- (V);
\draw (U) -- (W);
\draw (V) -- (W);

\draw (W1) -- (W2);
\draw (U) -- (W2);
\draw (V) -- (W2);

\draw (W2) -- (W3);
\draw (U) -- (W3);
\draw (V) -- (W3);

\draw (W3) -- (W4);
\draw (U) -- (W4);
\draw (V) -- (W4);

\draw (W4) -- (W5);
\draw (U) -- (W5);
\draw (V) -- (W5);

\draw [dashed] (W5) -- (W6);
\draw (U) -- (W6);
\draw (V) -- (W6);

\filldraw[blue] (U) circle (0.07cm);
\filldraw[blue] (V) circle (0.07cm);
\filldraw[blue] (W) circle (0.07cm);
\filldraw[blue] (W1) circle (0.07cm);
\filldraw[blue] (W2) circle (0.07cm);
\filldraw[blue] (W3) circle (0.07cm);
\filldraw[blue] (W4) circle (0.07cm);
\filldraw[blue] (W5) circle (0.07cm);
\filldraw[blue] (W6) circle (0.07cm);

\end{tikzpicture}}
}
\endgroup
\end{center}
\caption{The graph $P_2 + P_{n-2}$.
   \label{fig-pn4}}
\end{figure}
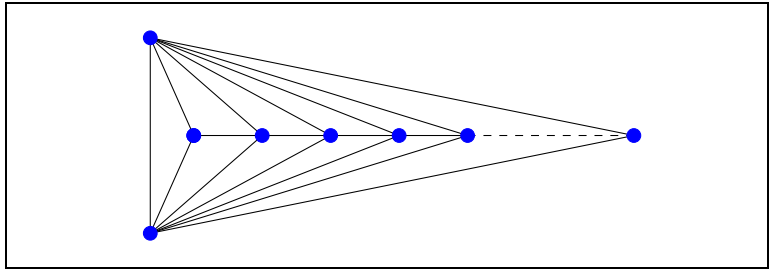

\begin{lemma}\label{planar trivial spectral bound}
$\sqrt{6n} > \lambda_1> \sqrt{2n-4}$.
\end{lemma}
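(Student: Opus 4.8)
The plan is to establish the two inequalities separately, each by a short argument.

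\smallskip

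For the lower bound $\lambda_1 > \sqrt{2n-4}$, I would exhibit a planar graph on $n$ vertices whose spectral radius already beats $\sqrt{2n-4}$. The natural candidate is $K_{2,n-2}$: it is bipartite and planar (every $K_{2,k}$ is planar), and since the nonzero eigenvalues of $K_{a,b}$ are $\pm\sqrt{ab}$, we have $\lambda_1(K_{2,n-2}) = \sqrt{2(n-2)} = \sqrt{2n-4}$. Now $K_{2,n-2}$ is a proper spanning subgraph of a planar graph on the same vertex set — for instance the ``book'' $K_2 + \overline{K_{n-2}}$ obtained by adding the edge between the two vertices of the part of size two (this graph is a union of triangles all sharing one edge, hence planar), or equally the conjectured extremal graph $P_2+P_{n-2}$. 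Since $G$ maximises the spectral radius over all planar graphs on $n$ vertices, and since $\lambda_1$ is strictly monotone under taking proper subgraphs (the consequence of \eqref{rayleigh quotient} and Perron--Frobenius noted in the preliminaries), we get $\lambda_1 \geq \lambda_1\big(P_2 + P_{n-2}\big) > \lambda_1(K_{2,n-2}) = \sqrt{2n-4}$.

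\smallskip

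For the upper bound $\lambda_1 < \sqrt{6n}$, I would use the trace identity $\sum_{i=1}^n \lambda_i^2 = \mathrm{tr}(A^2) = \sum_{v} d_v = 2m$, so that $\lambda_1^2 \leq 2m$. Combining this with the planar edge bound $m \leq 3n-6$ gives $\lambda_1^2 \leq 2(3n-6) = 6n - 12 < 6n$, hence $\lambda_1 < \sqrt{6n}$.

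\smallskip

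Neither step is a genuine obstacle; the only points that want a moment's care are verifying that adjoining one edge keeps $K_{2,n-2}$ planar (it does, as just noted) and recalling the spectrum of a complete bipartite graph.
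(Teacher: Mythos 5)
Your proof is correct and follows essentially the same route as the paper: the lower bound comes from $K_{2,n-2}$ being a proper subgraph of a larger planar graph together with strict monotonicity of $\lambda_1$, and the upper bound comes from $\lambda_1^2 \leq \sum_i \lambda_i^2 = 2m \leq 6n-12$. The only difference is that you spell out an explicit planar supergraph of $K_{2,n-2}$, which the paper leaves implicit.
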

\begin{proof}
For the lower bound, first note that the graph $K_{2,n-2}$ is planar and is a strict subgraph of some other planar graphs on the same vertex set. Since $G$ has maximum spectral radius among all planar graphs on $n$ vertices,
\[
 \lambda_1 > \lambda_1(K_{2,n-2}) = \sqrt{2n-4}.
 \]
 For the upper bound, since the sum of the squares of the eigenvalues equals twice the number of edges in $G$, which is
 at most $6n-12$ by planarity, we get that $\lambda_1 < \sqrt{6n-12} < \sqrt{6n}$.
\end{proof}


Next we partition the graph into vertices of small eigenvector entry and those with large eigenvector entry.  Fix $\epsilon > 0$,
whose exact value will be chosen later.  Let 
\[
L:= \{\mathbf{v}_z\in V(G): \mathbf{v}_z> \epsilon\}
\]
and $S = V(G) \setminus L$. For any vertex $z$, equation~\eqref{eigenvector equation} gives $\mathbf{v}_z\sqrt{2n-4} < \mathbf{v}_z\lambda_1\leq d_z$. Therefore,
\[
2(3n - 6)  \geq \sum_{z\in V(G)} d_z \geq \sum_{z\in L} d_z \geq |L|\epsilon \sqrt{2n-4},
\]
yielding $|L| \leq \frac{3\sqrt{2n-4}}{\epsilon}$. Since the subgraph of $G$ consisting of edges with one endpoint in $L$ and one endpoint in $S$ is a bipartite planar graph, we have $e(S,L) \leq 2n-4$, and since the subgraphs induced by $S$ and by $L$ are each planar, we have $e(S) \leq 3n-6$ and $e(L) \leq \frac{9\sqrt{2n-4}}{\epsilon}$. 


Next we show that there are two vertices adjacent to most of $S$. The first step towards this is an upper bound on the sum of eigenvector entries in both $L$ and $S$.
\begin{lemma}
\begin{equation}\label{eigenvector norm L}
\sum_{z\in L} \mathbf{v}_z \leq  \epsilon \sqrt{2n-4} + \frac{18}{\epsilon}
\end{equation}
\noindent and
 \begin{equation}\label{eigenvector norm S}
 \sum_{z\in S} \mathbf{v}_z \leq (1+3\epsilon)\sqrt{2n-4}.
 \end{equation}

\end{lemma}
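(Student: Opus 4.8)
The plan is to obtain both bounds from the double-counting device behind \eqref{bound on lambda squared}: I would multiply the relevant sum by $\lambda_1$, expand using the eigenvector equation \eqref{eigenvector equation}, split the resulting sum over length-two walks according to whether the far endpoint lies in $L$ or in $S$, bound the two pieces with the edge counts $e(L)\le \frac{9\sqrt{2n-4}}{\epsilon}$, $e(S)\le 3n-6$ and $e(S,L)\le 2n-4$ already recorded, and then divide by $\lambda_1$, using $\lambda_1>\sqrt{2n-4}$ from Lemma~\ref{planar trivial spectral bound}. The only bookkeeping subtlety is that a vertex of $S$ carries weight at most $\epsilon$ whereas any vertex carries weight at most $1$: a cross edge between $L$ and $S$ should be charged its small, $S$-side weight when summing over $L$, but only its crude bound of $1$ when summing over $S$.

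For \eqref{eigenvector norm L}: applying \eqref{eigenvector equation} to each $z\in L$ gives $\lambda_1\sum_{z\in L}\mathbf{v}_z=\sum_{z\in L}\sum_{w\sim z}\mathbf{v}_w$, and I would split the inner sum by whether $w\in L$ or $w\in S$. Edges inside $L$ are traversed twice and contribute at most $1$ each time, for a total at most $2e(L)\le \frac{18\sqrt{2n-4}}{\epsilon}$; edges from $L$ to $S$ are traversed once and contribute $\mathbf{v}_w\le\epsilon$, for a total at most $\epsilon\,e(S,L)\le\epsilon(2n-4)$. Dividing by $\lambda_1>\sqrt{2n-4}$ gives \eqref{eigenvector norm L}.

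For \eqref{eigenvector norm S}: the same expansion gives $\lambda_1\sum_{z\in S}\mathbf{v}_z=\sum_{z\in S}\sum_{w\sim z}\mathbf{v}_w$; now edges inside $S$ are traversed twice and contribute $\mathbf{v}_w\le\epsilon$ each time, for a total at most $2\epsilon\,e(S)\le 2\epsilon(3n-6)=3\epsilon(2n-4)$, while edges from $S$ to $L$ are traversed once and contribute at most $1$, for a total at most $e(S,L)\le 2n-4$. Thus $\lambda_1\sum_{z\in S}\mathbf{v}_z\le(1+3\epsilon)(2n-4)$, and dividing by $\lambda_1>\sqrt{2n-4}$ yields $\sum_{z\in S}\mathbf{v}_z\le(1+3\epsilon)(2n-4)/\lambda_1<(1+3\epsilon)\sqrt{2n-4}$.

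I do not expect any genuine obstacle here — this is a warm-up estimate whose only pitfall is mischarging a cross edge, which the remark in the first paragraph is designed to avoid. Conceptually, the lemma says that nearly all of the eigenvector mass of $S$ is routed through edges to $L$, and that is exactly what the subsequent argument needs in order to locate the two dominant vertices inside $L$.
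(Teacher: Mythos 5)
Your proof is correct and is essentially identical to the paper's: both expand $\lambda_1\sum \mathbf{v}_z$ via the eigenvector equation, split the length-two-walk sum by whether the far endpoint lies in $L$ or $S$, charge cross edges their $\epsilon$-weight or crude weight $1$ exactly as the paper does, and divide by $\lambda_1>\sqrt{2n-4}$.
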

\begin{proof}
\[
\sum_{z\in L} \lambda_1 \mathbf{v}_z = \sum_{z\in L} \sum_{y\sim z} \mathbf{v}_y = \sum_{z\in L}\left( \sum_{\substack{y\sim z \\ y\in S}} \mathbf{v}_y + \sum_{\substack{y\sim z\\ y\in L}} \mathbf{v}_y \right) \leq \epsilon e(S,L) + 2e(L) \leq \epsilon (2n-4) + \frac{18\sqrt{2n-4}}{\epsilon}.
\]
Dividing both sides by $\lambda_1$ and using Lemma \ref{planar trivial spectral bound} gives \eqref{eigenvector norm L}.

 On the other hand,
 \[
 \sum_{z\in S} \lambda_1 \mathbf{v}_z = \sum_{z\in S}\sum_{y\sim z} \mathbf{v}_y \leq 2\epsilon e(S) + e(S,L) \leq (6n-12)\epsilon  + (2n-4).
 \]
 Dividing both sides by $\lambda_1$ and using Lemma \ref{planar trivial spectral bound} gives \eqref{eigenvector norm S}.
 \end{proof}

 Now, for $u\in L$ we have
\[
\mathbf{v}_u\sqrt{2n-4} \leq \lambda_1 \mathbf{v}_u=\sum_{y\sim u} \mathbf{v}_y = \sum_{\substack{y\sim u \\ y\in L}} \mathbf{v}_y + \sum_{\substack{y\sim u\\ y\in S}} \mathbf{v}_y \leq \sum_{y\in L} \mathbf{v}_y + \sum_{\substack{y\sim u \\y\in S}} \mathbf{v}_y.
\]
By \eqref{eigenvector norm L}, this gives
\begin{equation}\label{eigenvector norm neighbors of w}
\sum_{\substack{y\sim u \\ y\in S}} \mathbf{v}_y \geq (\mathbf{v}_u-\epsilon)\sqrt{2n-4} - \frac{18}{\epsilon}.
\end{equation}

 The equations \eqref{eigenvector norm S} and \eqref{eigenvector norm neighbors of w} imply that if $u\in L$ and $\mathbf{v}_u$ is close to $1$, then the sum of the eigenvector entries of vertices in $S$ not adjacent to $u$ is small. The following lemma is used to show that $u$ is adjacent to most vertices in $S$.
 
\begin{lemma}\label{eigenvector entry lower bound}
For all $z$ we have $\mathbf{v}_z>\frac{1}{\sqrt{6n}}$.
\end{lemma}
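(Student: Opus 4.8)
The plan is to bound $\mathbf{v}_z$ from below for an arbitrary vertex $z$ by exhibiting enough short walks (of length $1$ and $2$) from $z$ to the high-eigenvector-entry part of the graph, and then invoking the eigenvector equation \eqref{eigenvector equation} together with the crude spectral bound $\lambda_1 < \sqrt{6n}$ from Lemma \ref{planar trivial spectral bound}. First I would record the trivial consequence of connectivity that every vertex has at least one neighbor, so $\lambda_1 \mathbf{v}_z = \sum_{y\sim z}\mathbf{v}_y > 0$; the issue is only to make this quantitatively at least $1/\sqrt{6n}$ after dividing by $\lambda_1 < \sqrt{6n}$, i.e.\ to show $\sum_{y\sim z}\mathbf{v}_y > 1/6$ would suffice, and in fact one wants the cleaner statement $\lambda_1^2 \mathbf{v}_z \geq$ (something bounded below by a constant).

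The key step is to iterate the eigenvector equation twice: $\lambda_1^2 \mathbf{v}_z = \sum_{y\sim z}\sum_{w\sim y}\mathbf{v}_w$. Since $G$ is connected and $x$ (with $\mathbf{v}_x=1$) has degree $d_x$ which is large (a separate lemma below shows $d_x$ is close to $n$, but even the weaker fact that $x$ has many neighbors will do), either $z$ is within distance $2$ of $x$ — in which case some term $\mathbf{v}_w$ on the right is comparable to a constant, e.g.\ if $z\sim x$ then taking $y=x$ gives a contribution, or if $z$ is at distance $2$ from $x$ we get $\mathbf{v}_z \geq \mathbf{v}_x/\lambda_1^2 = 1/\lambda_1^2 > 1/(6n)$ directly — or $z$ is far from $x$, which I would rule out or handle by a direct argument. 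The cleanest route: show every vertex is within distance $2$ of $x$. Indeed if $z\not\sim x$ then $z$ has a neighbor, and one shows that neighbor must be adjacent to $x$ using the structural bounds already established (the vertices of $S$ are mostly adjacent to $x$, and $|L|$ is small), or more robustly one simply notes $\lambda_1 \mathbf{v}_z = \sum_{y \sim z}\mathbf{v}_y \ge \mathbf{v}_{y_0}$ for any neighbor $y_0$, and then $\lambda_1 \mathbf{v}_{y_0} \ge \mathbf{v}_x = 1$ provided $y_0 \sim x$. So $\mathbf{v}_z \ge 1/\lambda_1^2 > 1/(6n) > 1/\sqrt{6n}$ once $n$ is large. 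If no neighbor of $z$ is adjacent to $x$, that would force a large set of vertices avoiding $N(x)\cup\{x\}$, contradicting $|B| = |V(G)\setminus(N(x)\cup\{x\})|$ being small (this is exactly what Lemma \ref{second vertex of large degree} and the surrounding analysis give, analogously to Lemma \ref{small eigvec} in the outerplanar case).

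The main obstacle is that at this precise point in the paper the strong structural facts ($d_x$ close to $n$, $|B|$ small) have been \emph{announced} in the outline but perhaps not yet all proven, so I would want the argument to use only what is genuinely available: the norm bounds \eqref{eigenvector norm L}, \eqref{eigenvector norm S}, \eqref{eigenvector norm neighbors of w}, the bound $|L|\le 3\sqrt{2n-4}/\epsilon$, and Lemma \ref{planar trivial spectral bound}. The safe argument is: for arbitrary $z$, $\lambda_1^2\mathbf{v}_z = \sum_{y\sim z}\sum_{w\sim y}\mathbf{v}_w \ge \mathbf{v}_z + \sum_{y\sim z}\mathbf{v}_y \cdot(\text{something})$ — more carefully, pick any neighbor $y$ of $z$; then $\lambda_1 \mathbf{v}_z \ge \mathbf{v}_y$ and $\lambda_1\mathbf{v}_y \ge \mathbf{v}_z$, so $\lambda_1^2 \mathbf{v}_z \ge \mathbf{v}_z$, which is vacuous. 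Instead use three edges: $\lambda_1^3 \mathbf{v}_z = \sum \mathbf{v}_w$ over walks of length $3$, which contains the walk $z\to y\to z\to y$ giving $\lambda_1^3\mathbf{v}_z \ge 2 d_z \mathbf{v}_z + \cdots$; still circular. The genuinely non-circular input must be that $z$ reaches a vertex of eigenvector entry $\ge c$ within a bounded number of steps, and the only such vertex guaranteed is $x$. Hence I expect the real content of the lemma's proof to be: \emph{every vertex lies within distance $2$ of $x$}, proved by showing that a vertex at distance $\ge 3$ from $x$, together with its neighborhood, would violate one of the counting bounds above (in particular $e(S,L)\le 2n-4$ combined with $\sum_{z\in S}\mathbf{v}_z \le (1+3\epsilon)\sqrt{2n-4}$ forces all but $O(\sqrt n)$ vertices of $S$ to be adjacent to the near-$1$ vertex). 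Once distance $\le 2$ is known, $\mathbf{v}_z \ge 1/\lambda_1^2 > 1/(6n) > 1/\sqrt{6n}$ for large $n$, completing the proof; I would present it in this order, isolating the distance-$2$ claim as the crux.
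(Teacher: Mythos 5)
There is a genuine gap, and in fact two. First, an arithmetic error sinks the conclusion: your final chain reads $\mathbf{v}_z \ge 1/\lambda_1^2 > 1/(6n) > 1/\sqrt{6n}$, but $1/(6n) < 1/\sqrt{6n}$ for all $n\ge 1$. A distance-$2$ argument can only ever deliver $\mathbf{v}_z \ge \mathbf{v}_x/\lambda_1^2 \approx 1/(6n)$, which is quadratically weaker than the stated bound $1/\sqrt{6n}\approx 1/\lambda_1$; to get $1/\lambda_1$ by propagating eigenvector entries you would need every vertex adjacent to $x$, which is not available (and is essentially what the rest of the section is trying to establish). Second, your proposed crux --- that every vertex lies within distance $2$ of $x$, deduced from ``all but $O(\sqrt n)$ vertices of $S$ are adjacent to $x$'' --- is circular: the paper's bound $|\{y\in S: y\not\sim x\}|\le 14\epsilon n$ is obtained immediately \emph{after} this lemma by combining \eqref{eigenvector norm neighbors of w} and \eqref{eigenvector norm S} with the lemma itself, since one needs a pointwise lower bound on eigenvector entries to convert the bound on $\sum_{y\in S,\, y\not\sim x}\mathbf{v}_y$ into a bound on the number of such vertices.

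The missing idea is to use the extremality of $G$ via a rewiring argument rather than to propagate eigenvector mass. The paper's proof: suppose $\mathbf{v}_z\le 1/\sqrt{6n} < 1/\lambda_1$ (Lemma~\ref{planar trivial spectral bound}). Then $z\not\sim x$, since otherwise \eqref{eigenvector equation} gives $\lambda_1\mathbf{v}_z\ge \mathbf{v}_x=1$. Form $H$ by deleting all edges at $z$ and adding the single edge $\{z,x\}$; $H$ is planar, and by the Rayleigh quotient \eqref{rayleigh quotient} the change is $2\mathbf{v}_z\bigl(1-\sum_{y\sim z}\mathbf{v}_y\bigr)/(\textbf{v}^t\textbf{v}) = 2\mathbf{v}_z(1-\lambda_1\mathbf{v}_z)/(\textbf{v}^t\textbf{v})>0$, so $\lambda_1(H)>\lambda_1(G)$, contradicting maximality. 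Your proposal never invokes a graph modification, which is the step that makes the strong bound $1/\sqrt{6n}$ attainable.
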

\begin{proof}
  By way of contradiction assume $\mathbf{v}_z\leq \frac{1}{\sqrt{6n}} < \frac{1}{\lambda_1}$. By
  equation~\eqref{eigenvector equation} $z$ cannot be adjacent to $x$, since
  $x$ has eigenvector entry $1$. Let $H$ be the graph obtained from $G$ by removing all edges incident with $z$ and making $z$ adjacent to $x$. Using the Rayleigh quotient, we have $\lambda_1(H) > \lambda_1(G)$, a contradiction.
\end{proof}

\noindent Now letting $u=x$ and combining \eqref{eigenvector norm neighbors of w} and \eqref{eigenvector norm S}, we get 
\[
(1+3\epsilon)\sqrt{2n-4} \geq \sum_{\substack{y\in S\\ y\not\sim x}} \mathbf{v}_y + \sum_{\substack{y\in S\\ y\sim x}} \mathbf{v}_y \geq \sum_{\substack{y\in S\\ y\not\sim x}} \mathbf{v}_y + (1-\epsilon)\sqrt{2n-4} - \frac{18}{\epsilon}.
\]
Now applying Lemma \ref{eigenvector entry lower bound} gives
\[
|\{y\in S: y\not\sim x\}| \frac{1}{\sqrt{6n} }\leq 4\epsilon \sqrt{2n-4} + \frac{18}{\epsilon}.
\]
For $n$ large enough, we have $|\{y\in S: y\not\sim x\}| \leq 14\epsilon n$. So $x$ is adjacent to most of $S$. Our next goal is to show that there is another vertex in $L$ that is adjacent to most of $S$.

\begin{lemma}\label{second vertex of large degree}
There is a $w\in L$ with $w\not=x$ such that $\mathbf{v}_w> 1-24\epsilon$ and $|\{y\in S: y\not\sim w\}| \leq 94\epsilon n$.
\end{lemma}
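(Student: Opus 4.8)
The plan is to show that the second vertex of large degree exists by a counting/Rayleigh‑quotient argument analogous to the one just used for $x$. Recall we have already shown that $x$ is adjacent to all but at most $14\epsilon n$ vertices of $S$, that $\sum_{z\in S}\mathbf{v}_z \le (1+3\epsilon)\sqrt{2n-4}$, and that every eigenvector entry exceeds $\frac{1}{\sqrt{6n}}$. The first step is to use equation \eqref{bound on lambda squared}, together with planarity of the various induced and bipartite subgraphs, to show that $\lambda_1^2$ is close to $2n$; indeed $\lambda_1^2 \le 2e(N(x)) + e(N(x), V(G)\setminus N(x))$ and planarity forces $e(N(x))$ to be comparable to $n$ while controlling the cross term. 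This means $N(x)$ carries almost all of the "spectral mass," so a second vertex must be doing most of the work of connecting the vertices of $N(x)\cap S$ to each other.

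Concretely, I would argue as follows. Let $S' = \{y\in S: y\sim x\}$, so $|S'| \ge |S| - 14\epsilon n \ge (1-O(\epsilon))n$, and each $y\in S'$ satisfies $\lambda_1\mathbf{v}_y = \sum_{z\sim y}\mathbf{v}_z$, with $x$ contributing $\mathbf{v}_x = 1$. Summing the eigenvector equation over $y \in S'$ and using $\sum_{z\in S}\mathbf{v}_z \le (1+3\epsilon)\sqrt{2n-4}$ together with $e(S') \le 3|S'| - 6$ and the fact that edges from $S'$ to $L\setminus\{x\}$ form a bipartite planar graph (so there are at most $2n-4$ of them), I want to conclude that some single vertex $w\in L\setminus\{x\}$ is adjacent to a positive proportion — in fact almost all — of $S'$. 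The mechanism: $\sum_{y\in S'}\lambda_1\mathbf{v}_y \ge \lambda_1 \cdot |S'|/\sqrt{6n} \cdot (\text{something})$, but more usefully, since each $y\in S'$ has $\lambda_1 \mathbf{v}_y \ge \sqrt{2n-4}\,\mathbf{v}_y$ and the only way to accumulate a sum of order $n$ on the right-hand sides (after subtracting the contribution $1$ from $x$ for each $y$, which totals $\approx n$, and noting $\lambda_1 \cdot \sum_{y\in S'}\mathbf v_y$ can be as small as order $\sqrt n \cdot \sqrt n = n$) is for the $y$'s to share neighbors; a convexity/pigeonhole count on $\sum_{w\ne x} |N(w)\cap S'|$ versus $e(S', V(G))$ then isolates one heavy $w$.

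Once such a $w$ is identified with $|N(w)\cap S'| \ge (1-O(\epsilon))n$, the bound $\mathbf{v}_w > 1 - 24\epsilon$ follows from \eqref{eigenvector norm neighbors of w} run in reverse, or directly: $\lambda_1\mathbf{v}_w = \sum_{z\sim w}\mathbf{v}_z \ge \sum_{y\in N(w)\cap S', \, y\sim x}\mathbf{v}_y$, and combining with a lower bound on $\sum \mathbf{v}_y$ over that set (each such $\mathbf v_y$ is at least roughly $(1-\epsilon)/\lambda_1$ since $y\sim x$ contributes $1$) gives $\lambda_1 \mathbf{v}_w \gtrsim (1-O(\epsilon))\frac{n}{\lambda_1}$, i.e. $\mathbf{v}_w \ge 1 - O(\epsilon)$; tracking constants yields the claimed $1-24\epsilon$. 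Then feeding $\mathbf{v}_w > 1-24\epsilon$ back into \eqref{eigenvector norm neighbors of w} with $u = w$ gives $\sum_{y\sim w,\, y\in S}\mathbf v_y \ge (1-25\epsilon)\sqrt{2n-4} - \frac{18}{\epsilon}$, and subtracting this from \eqref{eigenvector norm S} and applying Lemma \ref{eigenvector entry lower bound} bounds $|\{y\in S: y\not\sim w\}|$ by $(28\epsilon + o(1))\sqrt{6n}\cdot\sqrt{2n-4} = O(\epsilon n)$; carrying the constants gives the stated $94\epsilon n$.

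The main obstacle I anticipate is the \emph{isolation of a single heavy vertex} $w$ — the averaging argument only shows the neighborhoods $N(w)\cap S'$ over $w\in L\setminus\{x\}$ have large total size, which a priori could be spread over several vertices of $L$, each adjacent to a constant fraction of $S'$. Ruling this out requires using planarity more sharply (e.g. that $K_{3,3}$ is forbidden, so at most two vertices can be simultaneously adjacent to three common vertices, hence $S'$ cannot be "shared" by three high-degree vertices), combined with the fact that $|L| = O(\sqrt n /\epsilon)$ is small, to force essentially all of the mass onto one $w$. Getting the numerical constant down to $24\epsilon$ and $94\epsilon$ rather than something worse will require being slightly careful in these estimates, but no new ideas beyond \eqref{bound on lambda squared}, the norm bounds, and the $K_{3,3}$-freeness should be needed.
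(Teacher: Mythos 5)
There is a genuine gap, and it sits exactly where you flagged it: the production of the vertex $w$. Your plan is to first find a $w\in L\setminus\{x\}$ of large degree into $S'=N(x)\cap S$ and then convert that into $\mathbf{v}_w>1-24\epsilon$. Both halves fail as stated. For the first half, the pigeonhole on $\sum_{w\neq x}|N(w)\cap S'|$ versus $e(S',V(G))$ is an unweighted degree count, and (as you note) it is consistent with the mass being spread over several vertices of $L$ with nearly disjoint neighborhoods; $K_{3,3}$-freeness does not rescue this, since it only forbids two vertices of $L\setminus\{x\}$ from sharing three common neighbors in $S'$, which is perfectly compatible with, say, ten vertices each adjacent to disjoint sets of $n/10$ vertices of $S'$. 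For the second half, even granting $|N(w)\cap S'|\geq(1-O(\epsilon))n$, the bound $\mathbf{v}_y\geq 1/\lambda_1$ for $y\sim x$ gives only $\lambda_1\mathbf{v}_w\geq(1-O(\epsilon))n/\lambda_1$, i.e.\ $\mathbf{v}_w\geq(1-O(\epsilon))n/\lambda_1^2\approx 1/2$, since $\lambda_1^2\approx 2n$; this is a factor of $2$ short of the needed $1-24\epsilon$ and cannot be repaired by tracking constants.

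The paper avoids both problems by reversing the order of deduction: it never tries to find a high-degree $w$ first. Starting from $\lambda_1^2\leq\bigl(\sum_{uv\in E(G)}\mathbf{v}_u+\mathbf{v}_v\bigr)-\lambda_1$ and $\lambda_1^2\geq 2n-4$, it shows the \emph{edge-weighted} sum $\sum_{uv\in E(S,L),\,u\neq x}\mathbf{v}_u$ (weight being the eigenvector entry of the $L$-endpoint) is at least $(1-9\epsilon)n$, while the number of such edges is at most $(1+15\epsilon)n$ because $x$ already absorbs $(1-15\epsilon)n$ of the at most $2n$ edges in $E(S,L)$. Averaging over edges then directly yields a $w$ with $\mathbf{v}_w\geq\frac{1-9\epsilon}{1+15\epsilon}>1-24\epsilon$, and only afterwards is the large degree of $w$ deduced from \eqref{eigenvector norm neighbors of w}, \eqref{eigenvector norm S}, and Lemma \ref{eigenvector entry lower bound} --- the one step of your proposal that does match the paper. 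Your setup of summing the eigenvector equation over $y\in S'$ contains all the needed ingredients; the missing idea is to pigeonhole the eigenvector weight carried by the $L$-endpoints of the edges rather than the degrees.
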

\begin{proof}
By equation~\eqref{eigenvector equation}, we see
\[
\lambda_1^2 = \sum_{y\sim x}\sum_{z\sim y} \mathbf{v}_z \leq \left(\sum_{uv\in E(G)} \mathbf{v}_u+\mathbf{v}_v\right) - \sum_{y\sim x} \mathbf{v}_y = \left(\sum_{uv\in E(G)} \mathbf{v}_u+\mathbf{v}_v\right) - \lambda_1.
\]
Rearranging and noting that $e(S) \leq 3n-6$ and $e(L) \leq \frac{9\sqrt{2n-4}}{\epsilon}$ since $S$ and $L$ both induce planar subgraphs gives
\begin{align*}
& 2n-4\leq \lambda_1^2 + \lambda_1\leq \sum_{uv\in E(G)} \mathbf{v}_u+\mathbf{v}_v = \left(\sum_{uv\in E(S,L)} \mathbf{v}_u+\mathbf{v}_v\right) + \left(\sum_{uv\in E(S)} \mathbf{v}_u+\mathbf{v}_v\right) + \left(\sum_{uv\in E(L)} \mathbf{v}_u+\mathbf{v}_v \right) \\
& \leq \left(\sum_{uv\in E(S,L)} \mathbf{v}_u+\mathbf{v}_v \right) + \epsilon(6n-12) + \frac{18\sqrt{2n-4}}{\epsilon}.
\end{align*}
So for $n$ large enough,
\[
(2-7\epsilon)n \leq \sum_{uv\in E(S,L)} \mathbf{v}_u+\mathbf{v}_v =\left( \sum_{\substack{uv\in E(S,L)\\ u=x}} \mathbf{v}_u+\mathbf{v}_v\right) + \left(\sum_{\substack{uv\in E(S,L)\\ u\not= x}} \mathbf{v}_u+\mathbf{v}_v\right) \leq \epsilon e(S,L) + d_x + \sum_{\substack{uv\in E(S,L)\\ u\not= x} } \mathbf{v}_u,
\]
giving
\[
\sum_{\substack{uv\in E(S,L)\\ u\not = x}} \mathbf{v}_u \geq (1-9\epsilon)n.
\]

Now since $d_x \geq |S| - 14\epsilon n > (1-15\epsilon)n$, and $e(S,L) < 2n$, the number of terms in the left hand side of the sum is at most $(1+15\epsilon)n$. By averaging, there is a $w\in L$ such that 
\[
\mathbf{v}_w \geq \frac{1-9\epsilon}{1+15\epsilon} > 1-24\epsilon .
\]
Applying \eqref{eigenvector norm neighbors of w} and \eqref{eigenvector norm S} to this $w$ gives
\[
(1+3\epsilon) \sqrt{2n-4} \geq \sum_{\substack{y\in S\\ y\not\sim w}} \mathbf{v}_y + \sum_{\substack{y\in S\\ y\sim w}} \mathbf{v}_y \geq \sum_{\substack{y\in S\\ y\not\sim w}} \mathbf{v}_y + (1-21\epsilon)\sqrt{2n-4} + \frac{18}{\epsilon},
\]
and applying Lemma \ref{eigenvector entry lower bound} gives that for $n$ large enough
\[
|\{y\in S: y\not\sim w\}| \leq 94\epsilon n .
\]

\end{proof}
\medskip

In the rest of the section, let $w$ be the vertex from Lemma \ref{second vertex of large degree}. So $\mathbf{v}_x=1$ and $\mathbf{v}_w> 1-24\epsilon$, and both are adjacent to most of $S$. Our next goal is to show that the remaining vertices are adjacent to both $x$ and $w$. Let $B = N(x) \cap N(w)$ and $A = V(G) \setminus \{x\cup w \cup B\}$. We show that $A$ is empty in two steps: first we show the eigenvector entries of vertices in $A$ are as small as we need, which we then use to show that if there is a vertex in $A$ then $G$ is not extremal.

\begin{lemma}\label{eigenvector entries of A small}
Let $v\in V(G) \setminus \left\{ x,w \right\}$. Then $\mathbf{v}_v < \frac{1}{10}$.
\end{lemma}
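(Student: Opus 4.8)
The plan is to bound $\mathbf{v}_v$ for an arbitrary vertex $v \notin \{x,w\}$ by estimating the walk count $\lambda_1^2 \mathbf{v}_v = \sum_{y \sim v}\sum_{z \sim y} \mathbf{v}_z$, exactly as in the previous lemmas, but now exploiting that $v$ is a ``generic'' vertex and so its neighborhood is severely restricted by the $K_{3,3}$-freeness and planarity of $G$. Since $\lambda_1^2 > 2n-4$ by Lemma~\ref{planar trivial spectral bound}, it suffices to show $\sum_{y\sim v}\sum_{z\sim y}\mathbf{v}_z < (2n-4)/10$, and since most of the mass of $\mathbf{v}$ sits on $x$, $w$, and the large set $S$, the main contributions to control are: (i) walks of length $2$ from $v$ that end at $x$ or $w$, and (ii) walks that end at a vertex of $S$.

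First I would split $N(v)$ according to membership in $\{x,w\}$, in $S$, or in $A$. The key structural input is that $v$ has at most two neighbors in common with $x$ and at most two in common with $w$ (else a $K_{3,3}$ appears, using $n$ large), so the number of length-$2$ walks from $v$ through $N(v)$ into $\{x\} $ or $\{w\}$ is $O(1)$, contributing only $O(1)$ to the sum since $\mathbf{v}_x = 1$, $\mathbf{v}_w < 1$. Next, for the walks terminating in $S$: every such terminal edge lies in $E(S)$ or $E(S,L)$; counting length-$2$ walks from $v$ reaching $S$ amounts to bounding $\sum_{y \sim v} |N(y) \cap S|$, which I would control by noting that each $y \in S$ has at most two neighbors in common with $v$ inside $S$-side structure (planarity/$K_{2,3}$-type arguments on appropriate bipartite subgraphs), while each $y \in \{x,w\}$ contributes $|N(y)\cap S| \le |S| \le n$ but is multiplied by the tiny eigenvector entry available, and here I would instead use $\sum_{z \in S}\mathbf{v}_z \le (1+3\epsilon)\sqrt{2n-4}$ from \eqref{eigenvector norm S} together with $\lambda_1 \mathbf{v}_y = \sum \mathbf{v}_z$ so that $\sum_{z \sim y}\mathbf{v}_z = \lambda_1 \mathbf{v}_y \le \lambda_1 \le \sqrt{6n}$. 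Summing $\lambda_1 \mathbf{v}_y$ over the $\le$ (say) few neighbors $y$ of $v$ that are not in $\{x,w\}$, and using that $v$ has bounded codegree with $x$ and $w$, produces a bound of the form $C\sqrt{n} + (\text{small})\cdot \lambda_1 \mathbf{v}_v$, i.e. $\lambda_1^2 \mathbf{v}_v \le C\sqrt{n} + c\,\lambda_1 \mathbf{v}_v$ with $c$ a small absolute constant; dividing by $\lambda_1^2 \sim 2n$ yields $\mathbf{v}_v = O(1/\sqrt n)$, hence $< 1/10$ for $n$ large.

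More precisely, I expect the clean route to be: write $\lambda_1^2 \mathbf{v}_v = \sum_{y\sim v} \lambda_1 \mathbf{v}_y$, and separate the neighbor $y = x$ (if present) contributing $\lambda_1 \cdot 1 \le \sqrt{6n}$, the neighbor $y = w$ (if present) contributing $\le \sqrt{6n}$, and all remaining neighbors $y$, which lie in $B \cup A$ and hence (being outside $\{x,w\}$) have $\mathbf{v}_y < \epsilon$ if $y \in S$, or $\mathbf{v}_y \le 1$ if $y \in L \setminus \{x,w\}$ — but $|L|$ is small and the codegree of $v$ with any fixed vertex is $\le 2$, so only $O(|L|)$ such high-entry neighbors exist. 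Thus $\lambda_1^2 \mathbf{v}_v \le 2\sqrt{6n} + \epsilon\, d_v\, \lambda_1 + |L|\lambda_1 \le 2\sqrt{6n} + \epsilon \lambda_1^2 \mathbf{v}_v/\mathbf{v}_v \cdot(\cdots)$; being careful here, since $d_v \le$ some bound and $\lambda_1 \mathbf{v}_y \le \lambda_1$, one gets $\lambda_1^2\mathbf{v}_v \le 2\sqrt{6n} + \lambda_1(\epsilon\, d_v + O(|L|))$, and combining with $d_v \le n$ and choosing $\epsilon$ small enough gives $\mathbf{v}_v \le \frac{2\sqrt{6n} + O(\epsilon n \cdot \sqrt{6n})}{2n-4}$, which is $o(1)$ only if the $\epsilon n \lambda_1$ term is handled — this is the part I would need to be most careful about.

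The main obstacle is precisely controlling the contribution of neighbors of $v$ that lie in $S$: naively each contributes up to $\epsilon \lambda_1$, and there can be up to $d_v \sim n$ of them, giving $\epsilon n \lambda_1 \sim \epsilon n^{3/2}$, far too big after dividing by $\lambda_1^2 \sim n$. The fix must be that we do \emph{not} sum $\lambda_1 \mathbf{v}_y$ crudely but rather go back to $\sum_{y\sim v}\sum_{z \sim y}\mathbf{v}_z$ and bound it by $2e(N(v)) + e(N(v), V\setminus N(v))$ as in \eqref{bound on lambda squared}, then use that $N(v)$ induces a planar graph (so $e(N(v)) \le 3|N(v)|$) and, crucially, that edges from $N(v)$ to the outside are few when weighted by eigenvector entries — or, better, re-derive the bound so that the $S$-to-$S$ edges are each counted against $\sum_{z\in S}\mathbf{v}_z \le (1+3\epsilon)\sqrt{2n-4}$ rather than against $d_v$. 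I would therefore structure the proof around \eqref{bound on lambda squared} applied with $v$ in place of $x$ (adjusted for the fact $\mathbf{v}_v \ne 1$), splitting $N(v)$ into its intersections with $\{x,w\}$, $S$, and $A$, and using $K_{3,3}$-freeness to cap the relevant codegrees by $2$; the resulting inequality should read $\lambda_1^2 \mathbf{v}_v \le O(1) + O(\epsilon)\sqrt{n} + O(\mathbf{v}_v)\lambda_1$, which after dividing by $\lambda_1^2$ gives $\mathbf{v}_v < 1/10$ once $\epsilon$ is a sufficiently small absolute constant and $n$ is large.
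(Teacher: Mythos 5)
Your proposal does not close. The central difficulty you yourself flag --- that summing $\lambda_1 \mathbf{v}_y$ over the up-to-$d_v$ neighbours $y\in S$ of $v$ produces a term of order $\epsilon\, d_v \lambda_1$, far too large after dividing by $\lambda_1^2$ --- is never resolved; you only assert that ``the fix must be'' to return to an edge-counting version of \eqref{bound on lambda squared}, without carrying that out. There is a second, unaddressed problem of the same kind: the neighbours of $v$ lying in $L\setminus\{x,w\}$ contribute up to $|L|\lambda_1$ to $\lambda_1^2\mathbf{v}_v$, and since $|L|$ is only bounded by $3\sqrt{2n-4}/\epsilon$, this term divided by $\lambda_1^2$ is of order $1/\epsilon$, a \emph{large} constant rather than a small one. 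Finally, the structural claim you lean on --- that $v$ and $x$ can share at most two neighbours by $K_{3,3}$-freeness --- is false: three common neighbours of two vertices form only a $K_{2,3}$, which is planar (witness $K_{2,n-2}$). The correct codegree fact is that no vertex outside $\{x,w\}$ can have three neighbours in $B=N(x)\cap N(w)$, since those three vertices together with $x$, $w$ and $v$ would give a $K_{3,3}$.

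The missing idea is to run the argument in two stages with the \emph{one-step} eigenvector equation rather than the two-step walk count. First bound the total eigenvector mass of $A=V(G)\setminus(B\cup\{x,w\})$: every $u\in A$ has at most one neighbour in $\{x,w\}$ (it is not in $B$), at most two in $B$, and all remaining neighbours in $A$, so $\sum_{u\in A}d_u\le 3|A|+2e(A)<9|A|=O(\epsilon n)$ using $|A|=O(\epsilon n)$ from Lemma \ref{second vertex of large degree} and planarity of the graph induced on $A$; hence $\sum_{u\in A}\mathbf{v}_u\le\frac{1}{\lambda_1}\sum_{u\in A}d_u=O(\epsilon\sqrt{n})$. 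Then for any $v\notin\{x,w\}$, all but at most four of its neighbours lie in $A$, so $\lambda_1\mathbf{v}_v\le 4+\sum_{u\in A}\mathbf{v}_u=4+O(\epsilon\sqrt{n})$, and dividing by $\lambda_1>\sqrt{2n-4}$ gives $\mathbf{v}_v=O(\epsilon)<1/10$ once $\epsilon$ is chosen small. This sidesteps both uncontrolled terms in your sketch, because the bound on $\sum_{u\in A}\mathbf{v}_u$ makes no distinction between neighbours of $v$ in $S$ and those in $L$.
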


\begin{proof}
We first show that the sum over all eigenvector entries in $A$ is small, and then we show that each eigenvector entry is small. Note that for each $v\in A$, $v$ is adjacent to at most one of $x$ and $w$, and is adjacent to at most $2$ vertices in $B$ (otherwise $G$ would contain a $K_{3,3}$ and would not be planar). Thus
\[
\lambda_1\sum_{v\in A} \mathbf{v}_v \leq \sum_{v\in A} d_v \leq 3|A| +2e(A) < 9|A|,
\]
where the last inequality holds by $e(A) < 3|A|$ since $A$ induces a planar graph. Now, since $|L| < \frac{3\sqrt{2n-4}}{\epsilon} < \epsilon n$ for $n$ large enough, we have $|A| \leq (14+94+1)\epsilon n$ (by Lemma \ref{second vertex of large degree}) . Therefore 
\[
\sum_{v\in A} \mathbf{v}_v \leq \frac{9\cdot 109 \cdot \epsilon n}{\sqrt{2n-4}}.
\]

Now any $v\in V(G) \setminus \left\{ x,w \right\}$ is adjacent to at most 4 vertices in $B \cup \left\{x,w \right\}$, as otherwise we would have a $K_{3,3}$
as above.  So we get
\[
\lambda_1\mathbf{v}_v = \sum_{u\sim v} \mathbf{v}_u \leq 4 + \sum_{\substack{u\sim v\\ u\in A}} \mathbf{v}_u \leq 4 + \sum_{u\in A} \mathbf{v}_u \leq C\epsilon\sqrt{n},
\]
where $C$ is an absolute constant not depending on $\epsilon$. Dividing both sides by $\lambda_1$ and choosing $\epsilon$ small enough yields the result.
\end{proof}

We use the fact that the eigenvector entries in $A$ are small to show that if $v\in A$ (i.e. $v$ is not adjacent to both $x$ and $w$), then removing all edges from $v$ and adding edges from it to $x$ and $w$ increases the spectral radius, showing that $A$ must be empty. To do this, we must be able to add edges from a vertex to both $x$ and $w$ and have the resulting graph remain planar. This is accomplished by the following lemma.

\begin{lemma}\label{x connected to w}
If $G$ is extremal, then $x\sim w$.
\end{lemma}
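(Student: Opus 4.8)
The plan is to argue by contradiction: suppose $G$ is extremal but $x \not\sim w$, and produce a planar graph with strictly larger spectral radius, contradicting maximality. The natural move is to add the edge $xw$ to $G$; the only thing that can go wrong is planarity, so the heart of the argument is to show that one can always delete a bounded number of cheap edges (edges incident to low-eigenvector-entry vertices) to make room for $xw$ in a planar embedding, and that the Rayleigh-quotient gain from adding $xw$ outweighs the loss from the deleted edges.

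First I would quantify the gain. Using the Rayleigh quotient with the Perron vector $\mathbf v$ of $G$, adding the edge $xw$ contributes $\frac{2\mathbf v_x \mathbf v_w}{\mathbf v^t \mathbf v} = \frac{2\mathbf v_w}{\mathbf v^t\mathbf v} > \frac{2(1-24\epsilon)}{\mathbf v^t\mathbf v}$ to $\lambda_1$, which is bounded below by roughly $\frac{c}{\mathbf v^t\mathbf v}$ for an absolute constant $c$ (using $\mathbf v_x = 1$ and $\mathbf v_w > 1-24\epsilon$ from Lemma~\ref{second vertex of large degree}, with $\epsilon$ small). Removing an edge $e = ab$ costs $\frac{2\mathbf v_a \mathbf v_b}{\mathbf v^t\mathbf v}$. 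By Lemma~\ref{eigenvector entries of A small}, every vertex other than $x$ and $w$ has eigenvector entry less than $\tfrac1{10}$, so any edge not incident to $x$ or $w$ costs at most $\frac{2}{100\,\mathbf v^t\mathbf v}$, and an edge incident to (say) $x$ but whose other endpoint lies in $A$ costs less than $\frac{2}{10\,\mathbf v^t\mathbf v}$ times the small $A$-entry. The point is that a few such edges are far cheaper than the single edge $xw$ is valuable.

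Next I would establish the planarity/embedding step, which I expect to be the main obstacle. Fix a planar embedding of $G$. Since $G \cup \{xw\}$ is not planar, $x$ and $w$ lie on the boundaries of different faces (or are otherwise separated); I want to show that by deleting a \emph{constant} number of edges — each incident to a vertex of small eigenvector entry, hence cheap — I can obtain a planar embedding in which $x$ and $w$ share a face, so that $xw$ may be added while preserving planarity. The cleanest way is probably to take a curve in the plane from $x$ to $w$ realizing as few edge-crossings as possible and argue it can be chosen to cross only boundedly many edges, none of them incident to $x$ or $w$ twice; alternatively, use that in a planar graph there is a short $x$--$w$ path or a small edge cut separating the relevant faces. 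One must be careful that the deleted edges are genuinely low-cost: an edge incident to $x$ or $w$ whose other endpoint is in $B$ could have eigenvector entry close to $\mathbf v_x\cdot(\text{something})$, but such an edge need not be deleted since we only need to route around faces near $x$ and $w$, and the separating structure can be chosen among the many ``parallel'' $x$--$w$ connections through $B$, most of which involve low-entry vertices.

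Finally, combining the two estimates: the gain $\frac{c}{\mathbf v^t\mathbf v}$ from adding $xw$ strictly exceeds the total cost $\frac{O(1/10)}{\mathbf v^t\mathbf v}$ of the $O(1)$ deleted cheap edges, provided we chose $\epsilon$ small and the constant in Lemma~\ref{eigenvector entries of A small} (the bound $\tfrac1{10}$) is small enough — indeed the paper's $\tfrac1{10}$ is stated precisely to make such an inequality go through. Hence $\lambda_1$ of the modified planar graph exceeds $\lambda_1(G)$, contradicting the extremality of $G$. Therefore $x \sim w$. I would present the embedding lemma as the technical core and keep the eigenvector bookkeeping short, since it is just the Rayleigh-quotient manipulations already used repeatedly in the preceding lemmas.
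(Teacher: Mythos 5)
Your overall strategy (add the edge $xw$, pay for it by deleting a few cheap edges, and compare gain against loss via the Rayleigh quotient) is the same as the paper's, and your eigenvector bookkeeping is fine. But the step you yourself flag as ``the main obstacle'' --- showing that a \emph{bounded} number of edge deletions, none of them expensive, suffices to make room for $xw$ in a planar embedding --- is exactly the content of the lemma, and your proposal does not prove it. The device of a curve from $x$ to $w$ ``crossing only boundedly many edges'' is unjustified: the minimum number of edges such a curve must cross is a dual distance between the faces at $x$ and the faces at $w$, and nothing you have said bounds this by a constant. Even routing the curve along a common neighbor $u$ of $x$ and $w$ forces it to cross the edges incident to $u$ lying between $xu$ and $uw$ in the rotation at $u$, which can be arbitrarily many; and those edges need not be cheap in the aggregate.

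The paper closes this gap with two ingredients your sketch does not use. First, extremality forces $G$ to be edge-maximal planar, hence a triangulation with exactly $2n-4$ faces, each a triangle. Second, a pigeonhole on faces: since every face is a triangle, $x$ is incident to at least $d_x \geq (1-\delta)n$ faces, and the set $F$ of faces that either contain $x$ or share an edge with a face containing $x$ has size at least $\frac{4}{3}(1-\delta)n$; as $w$ is likewise incident to at least $(1-\delta)n$ faces and $\frac{4}{3}(1-\delta)n + (1-\delta)n > 2n-4$, some face incident to $w$ lies in $F$. Either that face also contains $x$ (and then $x\sim w$ because the face is a triangle), or it shares a single edge $\{y,z\}$ with a face containing $x$, and deleting that one edge and inserting $\{x,w\}$ preserves planarity. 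The endpoints $y,z$ are not $x$, so by Lemma \ref{eigenvector entries of A small} the product $\mathbf{v}_y\mathbf{v}_z$ is far below $\mathbf{v}_x\mathbf{v}_w$, and the Rayleigh-quotient comparison you describe finishes the proof. In short: right framework, but the decisive combinatorial step --- triangulation plus face counting, which reduces the cost to a single cheap edge --- is missing, and without it the argument does not go through.
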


Once $x\sim w$, one may add a new vertex adjacent to only $x$ and $w$ and the resulting graph remains planar.

\begin{proof}[Proof of Lemma \ref{x connected to w}]
From above, we know that for any $\delta > 0$, we may choose $\epsilon$ small enough so that when $n$ is sufficiently
large we have $d_x > (1-\delta)n$ and $d_w > (1-\delta)n$.  By maximality
of $G$, we also know that $G$ has precisely $3n-6$ edges, and by Euler's formula, any planar drawing of $G$ has $2n-4$ faces, each of which is bordered by precisely three edges of $G$ (because in a maximal planar graph, every face is a triangle).

Now we obtain a bound on the number of faces that $x$ and $w$ must be incident to.  Let $X$ be the set of edges incident to $x$.  Each edge in $G$ is incident to precisely two faces, and each face can be incident to at most two edges in $X$ (again, since each face is a triangle by maximality).  So $x$ is incident to at least $|X| = d_x \geq (1-\delta)n$ faces.  Similarly, $w$ is incident to at least $(1-\delta)n$ faces.

Let $F_1$ be the set of faces that are incident to $x$, and then let $F_2$ be the set of faces that are not incident to $x$, but which share an edge with a face in $F_1$.  Let $F = F_1 \cup F_2$.  We have $|F_1| \geq (1-\delta)n$.  Now each
face in $F_1$ shares an edge with exactly three other faces:  if two faces shared two edges, then since each face is a triangle
both faces must be bounded by the same three edges;  this cannot happen, except in the degenerate case when $n=3$.  At most two of these three faces are in $F_1$, and so $|F_2| \geq |F_1| / 3 \geq (1-\delta)n / 3$.  Hence, $|F| \geq (1-\delta)4n / 3$, and so the sum of the number of faces
in $F$ and the number of faces incident to  $w$ is larger than $2n-4$.  In 
particular, there must be some face $f$ that is both belongs to $F$ and is incident to $w$.

Since $f \in F$, then either $f$ is incident to $x$ or $f$ shares an edge with some face that is incident to $x$.  If $f$ is incident to both $x$ and $w$, then $x$ is adjacent to $w$ and 
we are done.  Otherwise, $f$ shares an edge $\left\{y,z\right\}$ with a face $f'$ that is incident to $x$.  In this case, deleting the edge $\left\{y,z\right\}$ and inserting the edge $\left\{x,w\right\}$ yields a planar graph $G'$.  By lemma~\ref{eigenvector entries of A small}, 
the product of the eigenvector entries of $y$ and $z$ is less than $1/100$,
which is smaller than the product of the eigenvector entries of $x$ and $w$.  
This implies that $\lambda_1(G') > \lambda_1(G)$, which is a contradiction.
\end{proof}

We now show that every vertex besides $x$ and $w$ is adjacent to both $x$ and $w$.

\begin{lemma}\label{A empty}
$A$ is empty.
\end{lemma}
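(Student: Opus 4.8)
The plan is to argue by contradiction. Suppose $A \neq \emptyset$; I will produce a single vertex $v \in A$ for which a local switch — delete every edge incident to $v$, then add the two edges $vx$ and $vw$ — strictly increases the spectral radius while keeping the graph planar, contradicting the maximality of $G$. Planarity of the new graph $G'$ is where $x \sim w$ (Lemma~\ref{x connected to w}) is used: since $v \notin \{x,w\}$, the edge $xw$ survives the deletion, so in any planar embedding it borders a face, and the now-isolated vertex $v$ can be redrawn inside that face and joined to $x$ and $w$ without crossings — this is exactly the remark following Lemma~\ref{x connected to w}. Hence $G'$ is a planar graph on $n$ vertices.

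For the eigenvalue comparison I would use the Perron vector $\mathbf{v}$ of $G$ as a test vector in the Rayleigh quotient for $G'$. Only the row and column of $v$ in the adjacency matrix change, and the contributions of any edges from $v$ to $x$ or $w$ that were already present cancel, so
\[
\lambda_1(G') - \lambda_1(G) \ \geq\ \frac{\mathbf{v}^t\!\left(A(G') - A(G)\right)\mathbf{v}}{\mathbf{v}^t\mathbf{v}} \ =\ \frac{2\mathbf{v}_v\left(\mathbf{v}_x + \mathbf{v}_w - \sum_{z\sim v}\mathbf{v}_z\right)}{\mathbf{v}^t\mathbf{v}} \ =\ \frac{2\mathbf{v}_v\left(\mathbf{v}_x + \mathbf{v}_w - \lambda_1\mathbf{v}_v\right)}{\mathbf{v}^t\mathbf{v}},
\]
this identity being valid whether or not $v$ is adjacent to one of $x,w$. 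Since $\mathbf{v}_x = 1$ and $\mathbf{v}_w > 1 - 24\epsilon$ by Lemma~\ref{second vertex of large degree}, it suffices to exhibit $v \in A$ with $\lambda_1\mathbf{v}_v < 2 - 24\epsilon$.

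Such a $v$ is obtained by averaging $\lambda_1\mathbf{v}_v$ over $A$. By definition of $A$, each $v \in A$ is adjacent to at most one of $x,w$; it has at most two neighbors in $B$, since three would make $\{v,x,w\}$ together with those neighbors a $K_{3,3}$; and $e(A) \leq 3|A| - 6$ because $A$ induces a planar graph. Using the bound $\mathbf{v}_z < \tfrac{1}{10}$ for every $z \neq x,w$ from Lemma~\ref{eigenvector entries of A small}, summing the eigenvector equation over $v \in A$ and grouping the walks of length one according to whether the far endpoint lies in $\{x,w\}$, in $B$, or in $A$, gives
\[
\sum_{v\in A}\lambda_1\mathbf{v}_v \ =\ \sum_{v\in A}\sum_{z\sim v}\mathbf{v}_z \ \leq\ |A| + \tfrac{2}{10}|A| + \tfrac{2}{10}\,e(A) \ <\ \tfrac{9}{5}|A|,
\]
so some $v \in A$ satisfies $\lambda_1\mathbf{v}_v < \tfrac{9}{5}$. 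Having fixed $\epsilon < \tfrac{1}{120}$ at the start of the section makes $\tfrac{9}{5} < 2 - 24\epsilon$, so for this $v$ the displayed lower bound on $\lambda_1(G') - \lambda_1(G)$ is strictly positive — the desired contradiction. Hence $A$ is empty.

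I expect the one genuinely delicate point to be the planarity of $G'$, namely that after isolating $v$ one can really attach it to both $x$ and $w$; this is precisely why $x \sim w$ had to be established beforehand, and everything else is the now-familiar local eigenvector switch. I also want to flag that the cruder estimate $\sum_{v\in A}\lambda_1\mathbf{v}_v \leq \sum_{v\in A} d_v < 9|A|$ only produces some $v$ with $\lambda_1\mathbf{v}_v < 9$, which is not enough; the $\tfrac{1}{10}$ bound of Lemma~\ref{eigenvector entries of A small} is exactly what pushes the average below $2$.
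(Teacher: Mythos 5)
Your proposal is correct and follows essentially the same route as the paper: the same local switch (isolate a vertex of $A$ and attach it to $x$ and $w$), the same use of Lemma~\ref{x connected to w} to keep the new graph planar, and the same Rayleigh-quotient comparison powered by Lemma~\ref{eigenvector entries of A small}. The only (immaterial) difference is how the good vertex $v$ is located --- you average $\lambda_1\mathbf{v}_v$ over $A$ using $e(A)<3|A|$, while the paper picks a vertex of degree at most $5$ in $A$ via the same planarity bound and checks the inequality directly.
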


\begin{proof}
Assume that $A$ is nonempty. $A$ induces a planar graph, therefore if $A$ is nonempty, then there is a $v\in A$ such that $|N(v)\cap A| < 6$. Further, $v$ has at most $2$ neighbors in $B$ (otherwise $G$ would contain a $K_{3,3}$. Recall that $\textbf{v}$ is the principal eigenvector for the adjacency matrix of $G$. Let $H$ be the graph with vertex set $V(G) \cup \{v'\} \setminus \{v\}$ and edge set $E(H) = E(G\setminus\{v\}) \cup \{v'x, v'w\}$. By Lemma \ref{x connected to w}, $H$ is a planar graph. Then 
\begin{align*}
\textbf{v}^T \textbf{v}\lambda_1(H) &\geq \textbf{v}^T A(H) \textbf{v} & \\
& = \textbf{v}^T A(G) \textbf{v} - 2\sum_{z\sim v} \mathbf{v}_v\mathbf{v}_z + 2\mathbf{v}_v(\mathbf{v}_w+\mathbf{v}_x) & \\
& \geq \textbf{v}^T A(G) \textbf{v} - 14\cdot \mathbf{v}_v \cdot \frac{1}{10} - 2\sum_{\substack{z\sim v\\ z\in \{w,x\}}} \mathbf{v}_v\mathbf{v}_z + 2\mathbf{v}_v(\mathbf{v}_w+\mathbf{v}_x) & \mbox{(by Lemma \ref{eigenvector entries of A small})} \\
& \geq \textbf{v}^T A(G) \textbf{v} - \frac{14}{10}\mathbf{v}_v + 2\mathbf{v}_v\mathbf{v}_w & \mbox{($|N(v)\cap \{x,w\}| \leq 1$)} \\
& > \textbf{v}^T A(G) \textbf{v} & \mbox{(as $\mathbf{v}_w>7/10$)}\\
&= \textbf{v}^T\textbf{v} \lambda_1(G).
\end{align*}
So $\lambda_1(H) > \lambda_1(G)$ and $H$ is planar, i.e. $G$ is not extremal, a contradiction.
\end{proof}

We now have that if $G$ is extremal, then $K_2+I_{n-2}$, the join of an edge and an independent set of size $n-2$, is a subgraph of $G$. Finishing the proof is straightforward.

\begin{theorem}
For $n\geq N_0$, the unique planar graph on $n$ vertices with maximum spectral radius is $K_{2} + P_{n-2}$.
\end{theorem}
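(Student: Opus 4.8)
The plan is to leverage everything established so far: by Lemma~\ref{A empty} the set $A$ is empty, so every vertex other than $x$ and $w$ lies in $B = N(x)\cap N(w)$, and by Lemma~\ref{x connected to w} we have $x\sim w$. Hence $G$ contains $K_2 + I_{n-2}$ as a spanning subgraph, where the $K_2$ is $\{x,w\}$ and $I_{n-2}$ is the independent-set complement (as a vertex set, not yet as an induced subgraph). It remains only to pin down the edges induced on $B$. First I would observe that the graph $H^\ast := G[B]$ is planar, and moreover $H^\ast$ must be \emph{outerplanar}: adding the two vertices $x,w$ (each joined to all of $B$) to a planar drawing of $H^\ast$ forces every vertex of $B$ to lie on the outer face of $H^\ast$, since a vertex of $B$ drawn strictly inside some face of $H^\ast$ could not also be joined to both $x$ and $w$ without creating a crossing or a $K_{3,3}$/$K_5$ minor. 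Concretely, $K_2 + H^\ast$ planar with both apex vertices adjacent to all of $V(H^\ast)$ forces $H^\ast$ to be outerplanar (this is a standard fact: $G$ planar and $\{x,w\}$ dominating with $x\sim w$ means $H^\ast$ embeds in the disc bounded by a face through $x$ and $w$).

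Next I would show $H^\ast$ must in fact be a path (or a subgraph of one on the relevant vertices, i.e.\ a linear forest) by a local switching/Rayleigh-quotient argument. Since $H^\ast$ is outerplanar it has at most $2(n-2)-3$ edges, but we can do better: because $G$ is \emph{maximal} planar ($m = 3n-6$), counting gives $e(B) = (3n-6) - 1 - 2(n-2) = n-3$, exactly the number of edges in $P_{n-2}$. So $H^\ast$ is an outerplanar graph on $n-2$ vertices with exactly $n-3$ edges. If $H^\ast$ were not connected it would be a forest with at most $n-4$ edges, contradiction; so $H^\ast$ is a tree on $n-2$ vertices, and an outerplanar tree with the property that its vertices all lie on one face when two dominating vertices are attached. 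The only tree whose vertices can be linearly ordered so that each consecutive pair plus two global apices stays planar is the path: any vertex of degree $\geq 3$ in $H^\ast$ together with $x$ and $w$ yields a $K_{3,3}$ subdivision (three internally-disjoint $x$–$w$ paths through three distinct neighbors-branches). Hence $\Delta(H^\ast)\le 2$, and a tree with maximum degree $2$ is a path $P_{n-2}$. Therefore $G = K_2 + P_{n-2}$.

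Alternatively, if one does not want to invoke maximality to get $e(B)=n-3$ directly, the same conclusion follows purely spectrally: among all planar graphs containing $K_2 + I_{n-2}$ with bipartition witnessed by a fixed edge $\{x,w\}$, the candidate $H^\ast$ on $B$ maximizing $\lambda_1$ is found by the Rayleigh quotient, and since all vertices of $B$ have nearly equal (small) eigenvector entries by the structure lemmas, $\mathbf{v}^T A(G)\mathbf{v}$ is increased by adding any missing edge of $P_{n-2}$ and would be increased by a switch if $H^\ast$ contained a vertex of degree $3$ — but no such edge can be added and no such switch performed without destroying planarity unless $H^\ast$ is already (contained in) $P_{n-2}$; maximality then forces equality $H^\ast = P_{n-2}$. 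Either way uniqueness is immediate: $K_2 + P_{n-2}$ is the unique planar graph containing $K_2 + I_{n-2}$ as a spanning subgraph and having $3n-6$ edges.

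I expect the only genuine subtlety — and the step to write carefully — is the planarity bookkeeping that upgrades "$G[B]$ is planar" to "$G[B]$ is a path": one must argue cleanly that a degree-$3$ vertex in $B$, or a cycle in $B$, or disconnectedness of $B$ combined with $m=3n-6$, each contradicts planarity of $G$ (via an explicit $K_{3,3}$ or $K_5$ subdivision using $x$, $w$, and the near-universality of their neighborhoods) or contradicts the edge count. Everything else is a one-line invocation of the lemmas above plus Euler's formula, so the theorem follows readily, exactly as the authors promised.
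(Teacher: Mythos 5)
Your proposal follows essentially the same route as the paper: $x$ and $w$ are universal and adjacent, every vertex of $B$ has at most two neighbours in $B$ (else a $K_{3,3}$ through $x$, $w$), $B$ contains no cycle (else a $K_5$ subdivision with $x$, $w$), hence $G[B]$ is a linear forest, and extremality forces it to be a single path. The only real difference is the closing step: the paper notes that any proper linear forest makes $G$ a strict subgraph of $K_2+P_{n-2}$ and invokes strict monotonicity of $\lambda_1$, while you use the edge count $e(B)=(3n-6)-1-2(n-2)=n-3$ to force one component; both are fine, and the outerplanarity of $G[B]$ is a true but unnecessary detour.

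One sentence in your write-up is wrong as stated: ``if $H^\ast$ were not connected it would be a forest with at most $n-4$ edges.'' A disconnected graph on $n-2$ vertices with $n-3$ edges need not be a forest (e.g.\ a cycle plus isolated vertices), so you cannot conclude at that point that $H^\ast$ is a tree. The repair is exactly what you flag in your final paragraph, but it must come \emph{first}: establish $\Delta(G[B])\le 2$ via the $K_{3,3}$ and rule out cycles via the $K_5$ subdivision, so that $G[B]$ is a linear forest; only then does $e(B)=n-3$ (or, as in the paper, subgraph monotonicity of the spectral radius) force $G[B]=P_{n-2}$. With the steps reordered this way your argument is correct.
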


\begin{proof}
By Lemmas \ref{x connected to w} and \ref{A empty}, $x$ and $w$ have degree $n-1$. We now look at the set $B = V(G) \setminus \{x,w\}$. For $v\in B$, we have $|N(v) \cap B| \leq 2$, otherwise $G$ contains a copy of $K_{3,3}$. Therefore, the graph induced by $B$ is a disjoint union of paths, cycles, and isolated vertices. However, if there is some cycle $C$ in the graph induced by $B$, then $C \cup\{x,w\}$ is a subdivision of $K_5$. So the graph induced by $B$ is a disjoint union of paths and isolated vertices. However, if $B$ does not induce a path on $n-2$ vertices, then $G$ is a strict subgraph of $K_2 + P_{n-2}$, and we would have $\lambda_1(G) < \lambda_1(K_2 + P_{n-2})$. Since $G$ is extremal, $B$ must induce $P_{n-2}$ and so $G = K_2 + P_{n-2}$.
\end{proof}

\section{Connected graphs of maximum irregularity}\label{pineapple}
Throughout this section, let $G$ be a graph on $n$ vertices with spectral radius $\lambda_1$ and first eigenvector normalized so that $\mathbf{v}_x=1$. Throughout we will use $d = 2e(G)/n$ to denote the average degree. We will also assume that $G$ is the connected graph on $n$ vertices that maximizes $\lambda_1 - d$.

To show that $G$ is a pineapple graph we first show that $\lambda_1 \sim \frac{n}{2}$ and $d\sim \frac{n}{4}$ (Lemma \ref{spectral radius and average degree}). Then we show that there exists a vertex with degree close to $\frac{n}{2}$ and eigenvector entry close to $1$ (Lemma~\ref{u good}). We use this to show that there are many vertices of degree about $\frac{n}{2}$, that these vertices induce a clique, and further that most of the remaining vertices have degree $1$ (Lemma \ref{modifying conditions} and Proposition \ref{almost pineapple structure}).  We complete the proof by showing that all vertices not in the clique have degree $1$ and that they are all adjacent to the same vertex.

We remark that once we show that $G$ is a pineapple graph, the small question remains of {\em which} pineapple graph maximizes $\lambda_1 - d$. Optimization of a cubic polynomial shows that $G$ is a pineapple with clique size $\lceil \frac{n}{2}\rceil +1$ (see \cite{AouchicheEtAl2008}, section 6).

\begin{figure}[]
\begin{center}
\begingroup

\setlength{\unitlength}{.01cm}
{
\setlength{\fboxsep}{10pt}
\framebox[1.5\width]{
\begin{tikzpicture}[rotate=180] 
   
   \coordinate (c1) at ($(7,0) + (180:1)$);
   \coordinate (c2) at ($(7,0) + (225:1)$);
   \coordinate (c3) at ($(7,0) + (270:1)$);
   \coordinate (c4) at ($(7,0) + (315:1)$);
   \coordinate (c5) at ($(7,0) + (0:1)$);
   \coordinate (c6) at ($(7,0) + (45:1)$);
   \coordinate (c7) at ($(7,0) + (90:1)$);
   \coordinate (c8) at ($(7,0) + (135:1)$);
   
   \coordinate (e1) at ($(7,0) + (270:1) + (0.9,-1.0)$);
   \coordinate (e2) at ($(7,0) + (270:1) + (0.5,-1.0)$);
   \coordinate (e3) at ($(7,0) + (270:1) + (-0.5,-1.0)$);
   \coordinate (e4) at ($(7,0) + (270:1) + (-0.9,-1.0)$);
   
   \coordinate (b1) at ($(7,0) + (270:1) + (0.15,-1.0)$);
   \coordinate (b2) at ($(7,0) + (270:1) + (0,-1.0)$);
   \coordinate (b3) at ($(7,0) + (270:1) + (-0.15,-1.0)$);
   
   \coordinate (lb2) at ($(7,0) + (270:1) + (0,-1.4)$);

%
%
%
%
%
%
    
   \draw[draw=black] (c3) -- (e1);   
   \draw[draw=black] (c3) -- (e2);   
   \draw[draw=black] (c3) -- (e3);   
   \draw[draw=black] (c3) -- (e4);   
   
   \draw[black] (7,0) node[circle,minimum size=2cm,draw] (v1) {$K_m$};

   \filldraw[fill=blue,draw=blue] (c3) circle [radius=0.07];

   \filldraw[fill=blue,draw=blue] (e1) circle [radius=0.07];
   \filldraw[fill=blue,draw=blue] (e2) circle [radius=0.07];
   \filldraw[fill=blue,draw=blue] (e3) circle [radius=0.07];
   \filldraw[fill=blue,draw=blue] (e4) circle [radius=0.07];
   
   \filldraw[black] (lb2) node {$n$};   
   
   \filldraw[fill=black] (b1) circle [radius=0.03];
   \filldraw[fill=black] (b2) circle [radius=0.03];   
   \filldraw[fill=black] (b3) circle [radius=0.03];
      
\end{tikzpicture}}
}
\endgroup
\end{center}
\caption{The pineapple graph, $PA(m,n)$.}
   \label{fig-pn4}
\end{figure}
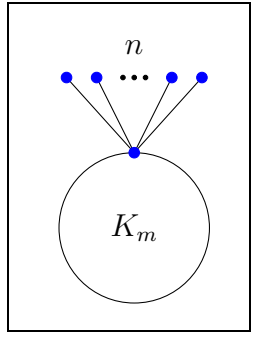

\begin{lemma}\label{spectral radius and average degree}
We have $\lambda_1(G) = \frac{n}{2} + c_1\sqrt{n}$ and $\frac{2e(G)}{n} = \frac{n}{4} + c_2\sqrt{n}$, where $|c_1|, |c_2|<1$.
\end{lemma}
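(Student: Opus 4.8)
The plan is to pin down $e(G)$ first, and then read off both $\frac{2e(G)}{n}$ and $\lambda_1$. The key observation is that $\lambda_1-\frac{2e(G)}{n}$ is sandwiched between an explicit lower bound coming from the conjectured extremal graph and the explicit upper bound $\sqrt{2e(G)-n+1}-\frac{2e(G)}{n}$, and the latter is a strictly concave function of $e(G)$ whose maximum is $\tfrac n4+O(1)$; near-optimality then forces $e(G)\approx \tfrac{n^2}{8}$.

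For the lower bound I would simply test the conjectured extremal example $H=PA(\lceil n/2\rceil+1,\,n)$. Since $K_{\lceil n/2\rceil+1}$ is a proper subgraph of $H$, we get $\lambda_1(H)>\lceil n/2\rceil\ge \tfrac n2$, while a direct count, $e(H)=\binom{\lceil n/2\rceil+1}{2}+(n-\lceil n/2\rceil-1)$, gives $d(H)<\tfrac n4+2$. Handling $n$ even and odd separately (the clique size and edge count differ slightly, and the spare $\tfrac12$'s cancel), one checks $\lambda_1(H)-d(H)>\tfrac n4-\tfrac32$, so by the maximality of $G$ we have $\lambda_1-\frac{2e(G)}{n}>\tfrac n4-\tfrac32$.

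For the upper bound I would combine $\frac{2e(G)}{n}\le\lambda_1$ (the average-degree bound from the preliminaries) with Hong's inequality $\lambda_1\le\sqrt{2e(G)-n+1}$ for connected graphs. Writing $s=2e(G)-n+1$ and completing the square yields
\[
\sqrt{s}-\frac{2e(G)}{n}=\frac n4-1+\frac1n-\frac1n\left(\sqrt{s}-\frac n2\right)^2,
\]
so $\lambda_1-\frac{2e(G)}{n}\le \frac n4-1+\frac1n-\frac1n\left(\sqrt{2e(G)-n+1}-\frac n2\right)^2$. Comparing with the lower bound forces $\left(\sqrt{2e(G)-n+1}-\frac n2\right)^2<\frac n2+1$, i.e.\ $\sqrt{2e(G)-n+1}=\frac n2+O(\sqrt n)$ with leading constant strictly below $1$. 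Since $2e(G)=s+n-1$, this gives $\frac{2e(G)}{n}=\frac n4+O(\sqrt n)$, which is the second assertion; and then $\lambda_1$ is trapped between $\frac{2e(G)}{n}+\left(\frac n4-\frac32\right)=\frac n2+O(\sqrt n)$ and $\sqrt{2e(G)-n+1}=\frac n2+O(\sqrt n)$, which is the first assertion once the constants are carried through for $n$ large.

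The conceptual content lies entirely in the strict concavity, so the only genuine difficulty is quantitative: one must keep the estimates tight enough that the errors land strictly below $\sqrt n$ rather than merely $O(\sqrt n)$. This is why I would invoke Hong's bound rather than the weaker $\lambda_1\le\sqrt{2e(G)}$ or Stanley's bound from the preliminaries — those have the right shape but leave a leading constant of roughly $1.2$ in front of $\sqrt n$ — and why $\lambda_1(PA(\cdot,n))$ and $d(PA(\cdot,n))$ should be estimated with no slack. I expect this bookkeeping, rather than any new idea, to be the main obstacle.
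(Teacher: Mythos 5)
Your proof is correct and follows essentially the same route as the paper's: a lower bound of $\frac n4-\frac32$ on $\lambda_1-d$ from the pineapple on $n$ vertices with clique size $\lceil n/2\rceil+1$, Hong's inequality for the matching upper bound, and a completing-the-square argument that squeezes both $\lambda_1$ and $2e(G)/n$; the only difference is that you complete the square in $\sqrt{2e(G)-n+1}$ where the paper does so in $\lambda_1$. (One notational slip: in the paper's convention $PA(m,q)$ has $m+q$ vertices, so your test graph should be $PA(\lceil n/2\rceil+1,\lfloor n/2\rfloor-1)$, as your edge count already assumes.)
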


\begin{proof}
By eigenvalue interlacing, $\mathrm{PA}(p,q)$ has spectral radius at least $p-1$. Setting 
 $H = \mathrm{PA}\left( \left\lceil \frac{n}{2}\right\rceil +1, \left\lfloor\frac{n}{2}\right\rfloor-1\right)$, we have 
\[
\lambda_1(H) - \frac{2e(H)}{n} \geq \frac{n}{4} - \frac{3}{2}.
\]
On the other hand, an inequality of Hong \cite{Hong1988} gives 
\[
\lambda_1^2 \leq 2e(G) - (n-1).
\]
It follows that
\begin{equation}\label{lower bound on average degree}
d \geq \frac{\lambda_1^2}{n} + 1 - \frac{1}{n}.
\end{equation}
Setting $\lambda_1 = pn$ and applying \eqref{lower bound on average degree}, we have $\lambda_1 - d \leq pn - p^2n -1 + \frac{1}{n}$. The right hand side of the inequality is maximized at $p=1/2$, giving 
\begin{equation}\label{tight bound on irregularity}
\frac{n}{4} - \frac{3}{2} \leq \lambda_1 - d \leq \frac{n}{4} - 1 + \frac{1}{n}.
\end{equation}

Next setting $\lambda_1 = \frac{n}{2} + c_1 \sqrt{n}$, \eqref{lower bound on average degree} gives
\[
d \geq \frac{n}{4} + c_1\sqrt{n} + c_1^2 + 1 - \frac{1}{n},
\]
whereas \eqref{tight bound on irregularity} implies
\begin{equation}\label{d bar bound}
d \leq \lambda_1 - \frac{n}{4} + \frac{3}{2}  = \frac{n}{4} + c_1\sqrt{n}  + \frac{3}{2}.
\end{equation}
Together, these imply $|c_1| <1$ and prove both statements for $n$ large enough.
\end{proof}

\begin{lemma}\label{error in x neighborhood small}
There exists a constant $c_3$ not depending on $n$ such that 
\[
0 \leq \frac{1}{|N(x)|} \sum_{y\sim x} d_y - \lambda_1 \mathbf{v}_y \leq c_3\sqrt{n}.
\]
\end{lemma}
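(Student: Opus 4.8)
The plan is to estimate the average, over neighbors $y$ of $x$, of the quantity $d_y - \lambda_1 \mathbf{v}_y$, which measures how far the eigenvector equation at each $y$ is from ``everything being at the maximum value $1$''. The starting point is the eigenvector equation at $x$, namely $\lambda_1 = \sum_{y \sim x} \mathbf{v}_y$, multiplied by $\lambda_1$ and expanded one more step via \eqref{eigenvector equation}: this gives $\lambda_1^2 = \sum_{y\sim x}\sum_{z\sim y}\mathbf{v}_z$. I would compare this with $\sum_{y\sim x} d_y$, which counts the same walks of length two but with each endpoint weighted by $1$ instead of $\mathbf{v}_z$; since all entries are at most $1$, the difference $\sum_{y\sim x} d_y - \lambda_1^2$ is nonnegative, and it equals $\sum_{y\sim x}\sum_{z\sim y}(1 - \mathbf{v}_z)$. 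Writing $\sum_{y\sim x} d_y - \lambda_1^2 = \sum_{y\sim x}(d_y - \lambda_1\mathbf{v}_y)$ and dividing by $|N(x)| = d_x$ gives exactly the quantity in the lemma, so the nonnegativity half is immediate.

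For the upper bound I would bound $\sum_{y\sim x} d_y - \lambda_1^2$ from above by a constant times $n^{3/2}$; dividing by $|N(x)|$, which is of order $n$ by Lemma~\ref{spectral radius and average degree} (since $d_x \geq \lambda_1 \sim n/2$), then yields the $c_3\sqrt n$ bound. To bound $\sum_{y \sim x} d_y$ I would split the double sum $\sum_{y\sim x}\sum_{z\sim y} 1$ according to whether $z \in N(x)$ or not, as in the derivation of \eqref{bound on lambda squared}: the contribution is at most $2e(N(x)) + e(N(x), V(G)\setminus N(x))$, which is at most twice the number of edges of $G$, hence at most $2e(G)$. By Lemma~\ref{spectral radius and average degree}, $2e(G) = dn \sim n^2/4$, and $\lambda_1^2 \sim n^2/4$ as well, so the \emph{leading} terms cancel; what remains is to control the discrepancy at the $\sqrt n$ scale. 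Concretely, $\sum_{y\sim x} d_y \le 2e(G) = dn$ and $\lambda_1^2 \ge (n/2 + c_1\sqrt n)^2 = n^2/4 + c_1 n^{3/2} + c_1^2 n$, while $dn \le (n/4 + c_1\sqrt n + 3/2)\cdot 2 \cdot (n/2)$-type estimates from \eqref{d bar bound}; carrying these through shows $\sum_{y\sim x} d_y - \lambda_1^2 = O(n^{3/2})$.

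The main obstacle, and the place requiring care, is making the cancellation of the $n^2/4$ main terms precise enough that the leftover is genuinely $O(n^{3/2})$ rather than, say, $O(n^2)$ with a small constant. This hinges on using \eqref{lower bound on average degree}, i.e. Hong's inequality $\lambda_1^2 \le 2e(G) - (n-1)$, in the form $2e(G) \ge \lambda_1^2 + n - 1$, together with the two-sided control on $\lambda_1 - d$ from \eqref{tight bound on irregularity}. Indeed, $\sum_{y\sim x}d_y - \lambda_1^2 \le 2e(G) - \lambda_1^2 = dn - \lambda_1^2$, and then $dn - \lambda_1^2 = n(d - \lambda_1) + \lambda_1(n - \lambda_1)$; the first term is at most $n(-n/4 + 3/2) $... wait, that is negative — more carefully, $d - \lambda_1 = -(\lambda_1 - d) \le -(n/4 - 3/2)$, but $\lambda_1(n-\lambda_1)$ with $\lambda_1 = n/2 + c_1\sqrt n$ equals $(n/2 + c_1\sqrt n)(n/2 - c_1\sqrt n) = n^2/4 - c_1^2 n$, so $dn - \lambda_1^2 \le -n^2/4 + O(n) + n^2/4 - c_1^2 n = O(n)$, which is even stronger than needed. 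Thus the estimate $\sum_{y\sim x}d_y - \lambda_1^2 = O(n)$ holds, and dividing by $|N(x)| \ge \lambda_1 \ge \sqrt{2n-4}$-scale... in fact $|N(x)| = \Theta(n)$, so the ratio is $O(1) \le c_3\sqrt n$, completing the proof. I would write this out cleanly, being explicit that $|N(x)| = d_x \ge \lambda_1 > n/2 - \sqrt n > 0$ so the division is legitimate, and choosing $c_3$ to absorb all the constants.
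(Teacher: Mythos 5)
Your proposal is correct and follows essentially the same route as the paper: both bound $\sum_{y\sim x}(d_y-\lambda_1\mathbf{v}_y)$ above by $2e(G)-\lambda_1^2=dn-\lambda_1^2$, control this via Hong's inequality and the asymptotics of Lemma~\ref{spectral radius and average degree}, note termwise nonnegativity from $\lambda_1\mathbf{v}_y\le d_y$, and divide by $|N(x)|=d_x\ge\lambda_1=\Omega(n)$. Your decomposition $dn-\lambda_1^2=n(d-\lambda_1)+\lambda_1(n-\lambda_1)$ even yields the slightly sharper bound $O(n)$ where the paper settles for $O(n^{3/2})$, but this is a refinement of the same argument rather than a different one.
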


\begin{proof}
From the inequality of Hong,
\[
\sum_{y\sim x} \lambda_1 \mathbf{v}_y = \lambda_1^2 \leq dn - (n-1).
\]
Rearranging and applying Lemma \ref{spectral radius and average degree}, we have
\[
0\leq \sum_{y\sim x} \left( d_y - \lambda_1 \mathbf{v}_y \right) = O\left(n^{3/2}\right).
\]
By equation~\eqref{eigenvector equation} again, and because the first eigenvector is normalized with $\mathbf{v}_x=1$, we have
\[
\lambda_1 = \sum_{y\sim x} \mathbf{v}_y \leq d_x,
\]
giving $d_x = \Omega(n)$. Combining, we have 
\[
\frac{1}{|N(x)|} \sum_{y\sim x} \left( d_y - \lambda_1 \mathbf{v}_y \right) = O\left(\sqrt{n}\right),
\]
where the implied constant is independent of $n$.
\end{proof}

Now we fix a constant $\epsilon > 0$, whose exact value will be 
chosen later.  The next lemma implies that close to half of the vertices of
$G$ have eigenvector entry close to $1$ for $n$ sufficiently large, 
depending on the chosen $\epsilon$.  We follow that with a proposition
which outlines the approximate structure of $G$, and then finally use
variational arguments to deduce that $G$ is exactly a pineapple graph.

\begin{lemma}\label{u good}
 There exists a vertex $u\not=x$ with $\mathbf{v}_u > 1- 2 \epsilon$ and $d_u - \lambda_1 \mathbf{v}_u = O(\sqrt{n})$.  Moreover $d_u \geq \left( 1/2 - 2\epsilon \right)n$.
\end{lemma}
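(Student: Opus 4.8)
The plan is to work with the \emph{defect} $\delta_y := d_y - \lambda_1 \mathbf{v}_y$ of a vertex $y$. Since every eigenvector entry is at most $1$, equation~\eqref{eigenvector equation} gives $\lambda_1 \mathbf{v}_y = \sum_{z\sim y}\mathbf{v}_z \le d_y$, so $\delta_y \ge 0$ for all $y$. Moreover, summing and using $\sum_y \mathbf{v}_y \ge \sum_{y\sim x}\mathbf{v}_y = \lambda_1$ together with Hong's inequality $\lambda_1^2 \le 2e(G)-(n-1)$ and Lemma~\ref{spectral radius and average degree} yields $\sum_{y\in V(G)}\delta_y = 2e(G) - \lambda_1\sum_y\mathbf{v}_y \le 2e(G)-\lambda_1^2 = O(n^{3/2})$. (As in Lemma~\ref{error in x neighborhood small}, one also has the localized bound $\sum_{y\sim x}\delta_y = \sum_{y\sim x}d_y - \lambda_1^2 = O(n^{3/2})$.) The strategy is to fix a large absolute constant $K$ and argue by contradiction: suppose that \emph{every} vertex $y\ne x$ with $\mathbf{v}_y>1-2\epsilon$ has $\delta_y > K\sqrt{n}$.

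Let $W=\{y\in V(G):\mathbf{v}_y>1-2\epsilon\}$. Under the contradiction hypothesis, $(|W|-1)K\sqrt n < \sum_{y\in W\setminus\{x\}}\delta_y \le \sum_{y}\delta_y = O(n^{3/2})$, so $|W|\le 1+O(n/K)$. Next I would estimate $\lambda_1^2 = \sum_{y\sim x}\lambda_1\mathbf{v}_y$ by splitting each neighbourhood according to $W$: for $y\sim x$,
\[
\lambda_1\mathbf{v}_y=\sum_{z\sim y}\mathbf{v}_z \le |N(y)\cap W| + (1-2\epsilon)\bigl(d_y-|N(y)\cap W|\bigr) = (1-2\epsilon)d_y + 2\epsilon\,|N(y)\cap W|.
\]
Summing over $y\sim x$, using $\sum_{y\sim x}d_y=\lambda_1^2+O(n^{3/2})$ and $\sum_{y\sim x}|N(y)\cap W| = \sum_{w\in W}|N(w)\cap N(x)| \le |W|(n-1) = O(n^2/K)$, gives $2\epsilon\lambda_1^2 \le 2\epsilon\cdot O(n^2/K) + O(n^{3/2})$, i.e. $\lambda_1^2 \le O(n^2/K) + O(n^{3/2}/\epsilon)$. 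But Lemma~\ref{spectral radius and average degree} gives $\lambda_1^2 \ge n^2/4 - O(n^{3/2})$, so choosing $K$ large enough (relative to the absolute constant hidden in the $O(n^2/K)$ term) forces $n^2/4 \le n^2/8 + O(n^{3/2})$, which fails for $n$ large. This contradiction produces the desired vertex $u\ne x$ with $\mathbf{v}_u>1-2\epsilon$ and $d_u-\lambda_1\mathbf{v}_u=\delta_u\le K\sqrt n = O(\sqrt n)$.

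Finally, the degree bound is immediate: $d_u \ge \lambda_1\mathbf{v}_u > \lambda_1(1-2\epsilon)$, and since $\lambda_1 = \tfrac n2 + c_1\sqrt n > \tfrac n2 - \sqrt n$ by Lemma~\ref{spectral radius and average degree}, we get $d_u > (\tfrac12-\epsilon)n - (1-2\epsilon)\sqrt n > (\tfrac12-2\epsilon)n$ for $n$ sufficiently large, completing the proof. I expect the only delicate point to be bookkeeping of the absolute constants so that $K$ can be chosen (independently of $\epsilon$) to make the inequality $\lambda_1^2 \le O(n^2/K)+O(n^{3/2}/\epsilon)$ genuinely contradict $\lambda_1^2 \gtrsim n^2/4$; everything else is routine estimation using the already-established Lemmas~\ref{spectral radius and average degree} and~\ref{error in x neighborhood small}.
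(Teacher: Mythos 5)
Your proof is correct, but it takes a genuinely different route from the paper's. The paper constructs $u$ in three stages: it first finds an intermediate vertex $y\in N(x)$ with $\mathbf{v}_y>\tfrac12-\epsilon$ and defect $d_y-\lambda_1\mathbf{v}_y=O(\sqrt n)$, by intersecting the set of neighbours of $x$ with large eigenvector entry (of size $\Omega_\epsilon(n)$) with the complement of the set of neighbours of defect exceeding $K\sqrt n$ (of size $O(n/K)$, by Lemma~\ref{error in x neighborhood small}); it then uses $\sum_{z\sim y}(1-\mathbf{v}_z)=d_y-\lambda_1\mathbf{v}_y=O(\sqrt n)$ to extract a set $U\subseteq N(y)$ of at least $(\tfrac14-2\epsilon)n$ vertices with entry at least $1-2\epsilon$; and finally it shows $|U\cap N(x)|=\Omega(n)$ via Hong's inequality and reruns the defect pigeonhole inside $U\cap N(x)$. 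You instead run a single global contradiction: if every vertex of entry exceeding $1-2\epsilon$ other than $x$ had defect exceeding $K\sqrt n$, the total defect bound $\sum_y\delta_y\le 2e(G)-\lambda_1^2=O(n^{3/2})$ would force $|W|=O(n/K)$, and splitting each $\lambda_1\mathbf{v}_y$ over $N(y)\cap W$ versus its complement would give $\lambda_1^2\le O(n^2/K)+O(n^{3/2}/\epsilon)$, contradicting $\lambda_1^2\ge n^2/4-O(n^{3/2})$. All the individual estimates check out (in particular the double count $\sum_{y\sim x}|N(y)\cap W|=\sum_{w\in W}|N(w)\cap N(x)|\le|W|(n-1)$, whose implied constant is absolute because $|c_1|,|c_2|<1$ in Lemma~\ref{spectral radius and average degree}), so $K$ can indeed be fixed as an absolute constant and the residual $O(n^{3/2}/\epsilon)$ term is beaten for $n$ large depending on $\epsilon$, which is the regime of the whole section; your approach even improves slightly on the paper, where $K$ depends on $\epsilon$. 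The only thing the paper's argument yields that yours does not is that $u$ may be taken in $N(x)$, but the lemma does not assert this and it is not used downstream, so nothing is lost.
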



\begin{proof}
We proceed by first showing a weaker result: that there is a vertex $y$ with
$\mathbf{v}_y> \frac{1}{2} - \epsilon$ and $d_y - \lambda_1 \mathbf{v}_y = O(\sqrt{n})$, and 
additionally that $y \in N(x)$.  We will then use this to obtain
the required result.

Let $A: = \{z\sim x: \mathbf{v}_z> \frac{1}{2} - \epsilon\}$. By Lemma \ref{spectral radius and average degree},
\[
\lambda_1 = \frac{n}{2} + c_1\sqrt{n},
\]
where $|c_1| <1$. Since $0< \mathbf{v}_z\leq 1$ for all $z\sim x$, we see that $|A| \geq \delta_\epsilon n$ where $\delta_\epsilon$ is a positive constant that depends only on $\epsilon$. Let $B = \{z\sim x: d_z - \lambda_1 \mathbf{v}_z >  K\sqrt{n}\}$, where $K$ is a fixed constant whose exact value will be chosen later. Now
\[
\frac{1}{|N(x)|} \sum_{y\sim x} \left( d_y -\lambda_1 \mathbf{v}_y \right) \geq \frac{1}{|N(x)|} \sum_{z\in B} \left( d_z - \lambda_1 \mathbf{v}_z \right) \geq \frac{1}{n} |B| K\sqrt{n}.
\]
By Lemma \ref{error in x neighborhood small}, $|B| \leq \frac{c_3}{K} n$. Therefore, for $K$ large enough depending only on $\epsilon$, we have $\left|A\cap B^c\right| > 0$.  This proves the existence of the vertex $y$,
with the properties claimed at the beginning of the proof.

Next, we show that there exists a set $U\subset N(y)$ such that 
$|U| \geq \left(\frac{1}{4} - 2\epsilon \right)n$ and 
$\mathbf{v}_u \geq 1-2\epsilon$ for all $u\in U$.  By Lemma \ref{spectral radius and average degree},
\[
\left(\frac{n}{2} + c_1\sqrt{n}\right)\left(\frac{1}{2} - \epsilon\right) \leq \lambda_1 \mathbf{v}_y \leq d_y,
\]
where $|c_1| < 1$. So $d_y \geq \left(\frac{1}{4} - \epsilon \right) n$ for $n$ large enough. Now let $C = \{z\sim y: \mathbf{v}_z < 1-2\epsilon \}$. Then
\[
K \sqrt{n} \geq d_y - \lambda_1 \mathbf{v}_y = \sum_{z\sim y} \left( 1 - \mathbf{v}_z \right) \geq \sum_{z\in C} \left( 1-\mathbf{v}_z \right) \geq 2|C| \epsilon.
\]
Therefore
\[
|N(y) \setminus C| \geq \left(\frac{1}{4} - \epsilon\right)n  - \frac{K \sqrt{n}}{2\epsilon}.
\]
Setting $U = N(y) \setminus C$, we have $|U| > \left(\frac{1}{4} - 2\epsilon\right)n$ for $n$ large enough.

Set $D = U \cap N(x)$.  We will first find a lower bound on $|D|$.  We have
 \begin{equation*}
  \lambda_1^2 \leq \sum_{y \sim x} d_y \leq 2m - \sum_{y \not \in N(x)} d_y.
 \end{equation*}
Rearranging this we get 
 \[ d - \frac{\lambda_1^2}{n} \geq \frac{1}{n} \sum_{y \not \in N(x)} d_y.\]
Now applying the bound on $d$ from equation~\ref{d bar bound} and expression for $\lambda_1$ in Lemma~\ref{spectral radius and average degree} yields
 \[ \left( \frac{n}{4} + c_1 \sqrt{n} + \frac{3}{2}\right) - \frac{\left(\frac{n}{2} + c_1 \sqrt{n}\right)^2}{n} \geq \frac{1}{n} \sum_{y \not \in N(x)} d_y, \]
which implies that
 \[ \frac{3}{2} n \geq \left( \frac{3}{2} - c_1^2 \right) n \geq \sum_{y \not \in N(x)} d_y \geq \sum_{y \in U\setminus N(x)} d_y \geq |U\setminus N(x)| (1-2\epsilon) \lambda_1 .\]
So 
 \[ |U \setminus N(x)| \leq \frac{3}{2(1-2\epsilon)} \frac{n}{\lambda_1} = \frac{3}{2(1-2\epsilon)} \frac{1}{1/2 + c_1 n^{-1/2}} .\]
In particular, $|D| \geq (\frac{1}{4} - c'_\epsilon) n$.

Now by the same argument used at the start of the proof to show the existence
of the vertex $y$, we have some vertex $u \in D$
with $d_u - \lambda_1 \mathbf{v}_u = O(\sqrt{n}$).   Finally 
\[d_u \geq \mathbf{v}_u \lambda_1 \geq (1 - 2 \epsilon) (n/2 + c_1\sqrt{n}) \geq \left( 1/2 - 2\epsilon \right)n .  \]

\end{proof}

\begin{lemma}\label{modifying conditions}
 Let $x,y$ be two vertices in $G$.  If
 $\mathbf{v}_x\mathbf{v}_y > 1/2 +n^{-1/2} + 5n^{-1}$, then $x$ and $y$ are adjacent.  On the
 other hand, if $\mathbf{v}_x\mathbf{v}_y < 1/2 - 3\epsilon$ then $x$ and $y$ are not adjacent.
\end{lemma}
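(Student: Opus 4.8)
The plan is to prove both implications via edge-swapping variational arguments, exactly in the spirit of Lemma~\ref{eigenvector entry lower bound} and the earlier swapping lemmas, using that $G$ maximizes $\lambda_1 - d$ among \emph{connected} graphs. The key observation is that adding or deleting a single edge changes $d = 2e(G)/n$ by exactly $2/n$, so the competing quantity shifts only by $O(1/n)$; meanwhile, a favorable edge swap changes the Rayleigh quotient $\mathbf{v}^T A \mathbf{v}/\mathbf{v}^T\mathbf{v}$ by an amount governed by the relevant products of eigenvector entries. So the whole argument reduces to comparing $\pm 2\mathbf{v}_x\mathbf{v}_y/\mathbf{v}^T\mathbf{v}$ against $2/n$, after controlling $\mathbf{v}^T\mathbf{v}$.

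First I would record the normalization facts: $\mathbf{v}^T\mathbf{v} \le n$ since every entry is at most $1$ (we may also note $\mathbf{v}^T\mathbf{v} \ge d_x + 1 \ge \lambda_1 + 1 = \Omega(n)$, though only the upper bound is needed below), and $\lambda_1 = n/2 + c_1\sqrt n$ with $|c_1|<1$ by Lemma~\ref{spectral radius and average degree}. For the first implication, suppose $\mathbf{v}_x\mathbf{v}_y > 1/2 + n^{-1/2} + 5n^{-1}$ but $x \not\sim y$. Form $G' = G + \{x,y\}$, which is still connected (adding an edge to a connected graph) and has $e(G') = e(G)+1$, so its average degree is $d + 2/n$. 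By the Rayleigh quotient,
\[
\lambda_1(G') - \lambda_1(G) \ge \frac{\mathbf{v}^T(A(G') - A(G))\mathbf{v}}{\mathbf{v}^T\mathbf{v}} = \frac{2\mathbf{v}_x\mathbf{v}_y}{\mathbf{v}^T\mathbf{v}} \ge \frac{2\mathbf{v}_x\mathbf{v}_y}{n}.
\]
Then
\[
\big(\lambda_1(G') - d(G')\big) - \big(\lambda_1(G) - d(G)\big) \ge \frac{2\mathbf{v}_x\mathbf{v}_y}{n} - \frac{2}{n} = \frac{2}{n}\left(\mathbf{v}_x\mathbf{v}_y - 1\right),
\]
which is unfortunately negative; so a cruder bound on $\mathbf{v}^T\mathbf{v}$ will not suffice, and this is where the main work lies. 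The fix is to use a sharper estimate: since $\mathbf{v}_x = 1$ is the maximum entry and $\lambda_1 \sim n/2$, one has $\mathbf{v}^T\mathbf{v} = \sum_z \mathbf{v}_z^2 \le \sum_z \mathbf{v}_z$, and the sum of all eigenvector entries can be bounded by $\lambda_1^{-1}\sum_z d_z \le 2e(G)/\lambda_1 = dn/\lambda_1 \approx n/2$, giving $\mathbf{v}^T\mathbf{v} \lesssim n/2$ (more precisely $\le (1/2 + o(1))n$, and with the explicit constants one gets $\mathbf{v}^T\mathbf{v} < n/2 + O(\sqrt n)$). Plugging $\mathbf{v}^T\mathbf{v} < n/2 + O(\sqrt n)$ into the Rayleigh bound gives $\lambda_1(G') - \lambda_1(G) \ge 2\mathbf{v}_x\mathbf{v}_y/(n/2 + O(\sqrt n)) = (4\mathbf{v}_x\mathbf{v}_y/n)(1 - O(n^{-1/2}))$, and the hypothesis $\mathbf{v}_x\mathbf{v}_y > 1/2 + n^{-1/2} + 5n^{-1}$ makes this strictly exceed $2/n$, contradicting maximality of $G$. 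The threshold in the statement is precisely calibrated so that $4\mathbf{v}_x\mathbf{v}_y/n$ beats $2/n$ with room to absorb the lower-order $\mathbf{v}^T\mathbf{v}$ correction.

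For the second implication, suppose $\mathbf{v}_x\mathbf{v}_y < 1/2 - 3\epsilon$ but $x \sim y$. Here I would delete the edge $\{x,y\}$ — but one must first guarantee connectivity of $G - \{x,y\}$. Since by Lemma~\ref{u good} there is a vertex $u$ of degree $\ge (1/2 - 2\epsilon)n$ and eigenvector entry $\ge 1 - 2\epsilon$, and $x$ has $d_x \ge \lambda_1 \ge (1/2 - o(1))n$, the vertices $x$ and $u$ together dominate the graph and $G$ is highly connected away from low-degree vertices; in particular if both $x$ and $y$ have large degree then $G - \{x,y\}$ is still connected, and if one of them (say $y$) has small eigenvector entry forced by $\mathbf{v}_x\mathbf{v}_y < 1/2 - 3\epsilon$ together with $\mathbf{v}_x$ possibly large, one argues connectivity is preserved because $y$ retains other neighbors. (If deleting the edge would disconnect $G$, then $\{x,y\}$ is a bridge, which combined with the degree/eigenvector structure from Lemma~\ref{u good} is easily excluded.) Granting connectivity, $G' = G - \{x,y\}$ has average degree $d - 2/n$, and
\[
\lambda_1(G') - \lambda_1(G) \ge \frac{\mathbf{v}^T(A(G') - A(G))\mathbf{v}}{\mathbf{v}^T\mathbf{v}} = \frac{-2\mathbf{v}_x\mathbf{v}_y}{\mathbf{v}^T\mathbf{v}},
\]
so
\[
\big(\lambda_1(G') - d(G')\big) - \big(\lambda_1(G) - d(G)\big) \ge \frac{-2\mathbf{v}_x\mathbf{v}_y}{\mathbf{v}^T\mathbf{v}} + \frac{2}{n} \ge \frac{2}{n} - \frac{2\mathbf{v}_x\mathbf{v}_y}{\mathbf{v}^T\mathbf{v}}.
\]
Using $\mathbf{v}^T\mathbf{v} \le n$ crudely, this is at least $(2/n)(1 - \mathbf{v}_x\mathbf{v}_y) > (2/n)(1/2 + 3\epsilon) > 0$, contradicting maximality. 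Note the asymmetry: for deletion the easy bound $\mathbf{v}^T\mathbf{v}\le n$ already works with the weaker hypothesis $\mathbf{v}_x\mathbf{v}_y < 1/2 - 3\epsilon$, whereas for addition we needed the sharp bound $\mathbf{v}^T\mathbf{v} \lesssim n/2$, which explains why the two thresholds in the statement are not symmetric about $1/2$.

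The main obstacle, as indicated, is the addition direction: proving the clean $\mathbf{v}^T\mathbf{v} < n/2 + O(\sqrt n)$ bound with explicit enough constants that the stated threshold $1/2 + n^{-1/2} + 5n^{-1}$ genuinely suffices, rather than just $1/2 + \Omega(n^{-1/2})$. This hinges on the chain $\mathbf{v}^T\mathbf{v} \le \sum_z \mathbf{v}_z \le \lambda_1^{-1}\sum_z d_z = dn/\lambda_1$ together with the precise estimates $d \le n/4 + c_1\sqrt n + 3/2$ from \eqref{d bar bound} and $\lambda_1 = n/2 + c_1\sqrt n$ from Lemma~\ref{spectral radius and average degree}, which give $dn/\lambda_1 = n/2 + O(\sqrt n)$ with a controllable implied constant; the secondary obstacle is the connectivity bookkeeping in the deletion direction, handled by invoking the near-domination structure from Lemma~\ref{u good}.
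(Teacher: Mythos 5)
Your addition direction is essentially the paper's argument: both hinge on the upper bound $\mathbf{v}^t\mathbf{v} \leq n/2 + \sqrt{n} + 5$, which the paper gets from $\mathbf{v}^t\mathbf{v} \leq \sum_z \mathbf{v}_z \leq 1 + \lambda_1 + \lambda_1^{-1}\sum_{z \notin N(x)\cup\{x\}} d_z$ together with the bound $\sum_{z\notin N(x)\cup\{x\}} d_z \leq \tfrac32 n$ from the proof of Lemma~\ref{u good}; your route via $\mathbf{v}^t\mathbf{v} \leq dn/\lambda_1$ and \eqref{d bar bound} yields the same $n/2 + \sqrt{n} + O(1)$ and calibrates the threshold correctly.

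The deletion direction, however, contains a genuine error. You need $\frac{2}{n} - \frac{2\mathbf{v}_x\mathbf{v}_y}{\mathbf{v}^t\mathbf{v}} > 0$, i.e.\ $\mathbf{v}_x\mathbf{v}_y < \mathbf{v}^t\mathbf{v}/n$, and for this a \emph{lower} bound on $\mathbf{v}^t\mathbf{v}$ is required. The crude bound $\mathbf{v}^t\mathbf{v} \leq n$ gives $\frac{2\mathbf{v}_x\mathbf{v}_y}{\mathbf{v}^t\mathbf{v}} \geq \frac{2\mathbf{v}_x\mathbf{v}_y}{n}$, so your chain only shows the quantity is \emph{at most} $\frac{2}{n}(1-\mathbf{v}_x\mathbf{v}_y)$ --- the inequality points the wrong way, and your concluding remark that ``only the upper bound is needed'' is exactly backwards. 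If $\mathbf{v}^t\mathbf{v}$ were, say, $n/4$, the hypothesis $\mathbf{v}_x\mathbf{v}_y < 1/2 - 3\epsilon$ would not suffice. The missing ingredient is the lower half of inequality~\eqref{vvt bound}: $\mathbf{v}^t\mathbf{v} > n/2 - 2\epsilon n - O(\sqrt{n}) \geq (1/2-3\epsilon)n$, which the paper obtains by applying Cauchy--Schwarz over the neighborhood of the vertex $u$ from Lemma~\ref{u good}, namely $\mathbf{v}^t\mathbf{v} \geq \sum_{z\sim u}\mathbf{v}_z^2 \geq (\lambda_1\mathbf{v}_u)^2/d_u \geq d_u - O(\sqrt{n})$; this is where the properties $d_u - \lambda_1\mathbf{v}_u = O(\sqrt{n})$ and $d_u \geq (1/2-2\epsilon)n$ are essential. (Your parenthetical claim $\mathbf{v}^t\mathbf{v}\geq d_x+1$ is also false, since neighbors of $x$ need not have eigenvector entry $1$; the Cauchy--Schwarz step is what replaces it, and applied at $x$ alone it would only give $\approx n/4$.) Your connectivity worry in the deletion step is reasonable but not the crux --- the paper does not belabor it either, handling spanning/connectivity issues later in Proposition~\ref{almost pineapple structure}.
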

\begin{proof}
We begin by bounding the dot product of the leading eigenvector $\textbf{v}$
with itself.  We will show that
\begin{equation}\label{vvt bound}
\frac{n}{2} + \sqrt{n} + 5 \geq \textbf{v}^t\textbf{v} > \frac{n}{2} - 2 \epsilon n  - O(\sqrt{n}).
\end{equation}

\noindent First, we show the lower bound.  With $u$ from the previous lemma, by Cauchy--Schwarz we have
\[  \textbf{v}^t\textbf{v} \geq \sum_{z \sim u} \mathbf{v}_z^2 \geq \frac{1}{d_u}\left( \sum_{z \sim u} \mathbf{v}_z \right)^2 = \frac{(\lambda_1 \mathbf{v}_u)^2}{d_u}. \]
By Lemma~\ref{u good}, we then have
 \[\textbf{v}^t\textbf{v} \geq \frac{(d_u - O(\sqrt{n}))^2}{d_u} \geq d_u - O(\sqrt{n}) > \frac{n}{2} - 2 \epsilon n  - O(\sqrt{n}) .\]
For the upper bound of inequality~\eqref{vvt bound}, first set $E = \left( N(x) \cup \{x\}\right)^C$.  Then
 \[ \textbf{v}^t\textbf{v} = \sum_{z \in V(G)} \mathbf{v}_z^2 \leq \sum_{z \in V(G)} \mathbf{v}_z \leq 1 + \sum_{z \in N(x)} \mathbf{v}_z + \sum_{z \in E} \mathbf{v}_z \leq 1 + \lambda_1 + \frac{1}{\lambda_1} \sum_{z \in E} d_z .\]
From the proof of Lemma~\ref{u good} we have the bound
 \[ \sum_{ z \in E} d_z \leq \frac{3}{2} n .\]
Hence
 \[ \textbf{v}^t \textbf{v}  \leq 1 + \frac{n}{2} + c_1 \sqrt{n} + \frac{3}{2} \cdot \frac{1}{1/2 + c_1 n^{-1/2}} \leq \frac{n}{2} + \sqrt{n} + 5 .\]
This completes the proof of inequality~\eqref{vvt bound}.

 Let $\lambda_1^+$ be the leading eigenvalue of the graph formed by adding the 
 edge $\{x,y\}$ to $G$.  Then by \eqref{rayleigh quotient} we have
  \[ \lambda_1^+ - \lambda_1 \geq \frac{\textbf{v}^t (A^+-A) \textbf{v}}{\textbf{v}^t \textbf{v}} \geq \frac{2\mathbf{v}_x\mathbf{v}_y}{\textbf{v}^t \textbf{v}} \geq \frac{2\mathbf{v}_x\mathbf{v}_y}{n/2 + \sqrt{n} + 5}  = \frac{2\mathbf{v}_x\mathbf{v}_y}{n(1/2+n^{-1/2} + 5 n^{-1})}.\]
If $\mathbf{v}_x\mathbf{v}_y > 1/2+n^{-1/2} + 5n^{-1}$, then
\[ (\lambda_1^+ - d^+) - (\lambda - d) > \frac{2}{n} - \frac{2}{n} = 0.\]
Hence $\left\{x,y\right\}$ must already have been an edge, otherwise this
would contradict the maximality of $G$.

Similarly if $\lambda_1^-$ is the leading eigenvalue of the graph obtained
from $G$ by deleting the edge $\{x,y\}$, then  
  \[ \lambda_1 - \lambda_1^- \leq \frac{\textbf{v}^t (A-A^-) \textbf{v}}{\textbf{v}^t \textbf{v}} \leq \frac{2\mathbf{v}_x\mathbf{v}_y}{n/2 - 2\epsilon n - O(\sqrt{n})} \leq \frac{2\mathbf{v}_x\mathbf{v}_y}{(1/2 - 3 \epsilon)n},\]
when $n$ is large enough.  Now if
$\mathbf{v}_x\mathbf{v}_y < 1/2 - 3\epsilon$, then 
 \[ (\lambda_1 - d) - (\lambda_1^- - d^-) < 0 .\] 

\end{proof}


\begin{figure}[]
\begin{center}
\begingroup

\setlength{\unitlength}{.01cm}
{
\setlength{\fboxsep}{10pt}
\framebox[1.5\width]{
\begin{tikzpicture}
\filldraw[black] (0.553380,1.150606) node[label={[label distance=0.1cm]180:$1 - O(\epsilon)$},label={[label distance=0.1cm]$U$},circle,minimum size=2cm,draw] (v0) {$K_{|U|}$};

\filldraw[black] (4.553380,1.150606) node[label={[label distance=0.1cm]0:$O(\frac{\epsilon}{n})$},label={[label distance=0.1cm]$V$},circle,minimum size=2cm,draw] (v1) {$I_{|V|}$};

\filldraw[black] (2.553380,-1.80606) node[label={[label distance=0.1cm]0:$\frac{1}{2}+O(\epsilon)$},label={[label distance=0.1cm]88:$W$},circle,minimum size=0.8cm,draw] (v1) {$I_{|W|}$};

\coordinate (UC) at (0.553380,1.150606);
\coordinate (VC) at (4.553380,1.150606);
\coordinate (WC) at (2.553380,-1.80606);

\coordinate (Ul1) at ($(UC) + (1.1,-0.4)$);
\coordinate (Ul2) at ($(UC) + (1.18,-0)$);
\coordinate (Ul3) at ($(UC) + (1.1,0.4)$);

\coordinate (Ub2) at ($(UC) + (0.24,-1.2)$);
\coordinate (Ub3) at ($(UC) + (0.54,-1.0)$);

\coordinate (Vl1) at ($(VC) + (-1.1,-0.4)$);
\coordinate (Vl2a) at ($(VC) + (-1.18,-0.2)$);
\coordinate (Vl2b) at ($(VC) + (-1.18,0.2)$);
\coordinate (Vl3) at ($(VC) + (-1.1,0.4)$);

\coordinate (Wt1) at ($(WC) + (-0.5,0.5)$);
\coordinate (Wt2) at ($(WC) + (-0.3,0.6)$);
\coordinate (Wt3) at ($(WC) + (-0.6,0.3)$);

\draw (Ul1) -- (Vl1);
\draw (Ul2) -- (Vl2a);
\draw (Ul2) -- (Vl2b);
\draw (Ul3) -- (Vl3);

\draw (Ub3) -- (Wt1);
\draw (Ub3) -- (Wt2);
\draw (Ub3) -- (Wt3);

\draw (Ub2) -- (Wt3);
\draw (Ub2) -- (Wt1);


\end{tikzpicture}}
}
\endgroup
\end{center}
\caption{Structure of $G$ in Proposition \ref{almost pineapple structure}.  The number beside
each set indicates the values of eigenvector entries within the set. 
$U$ induces a complete
graph and $V$, $W$ are independent sets.  Each vertex in $V$ is adjacent
to exactly one vertex in $U$, and each vertex in $W$ is adjacent to multiple
vertices in $U$.
   \label{pineapple structure picture}}
\end{figure}
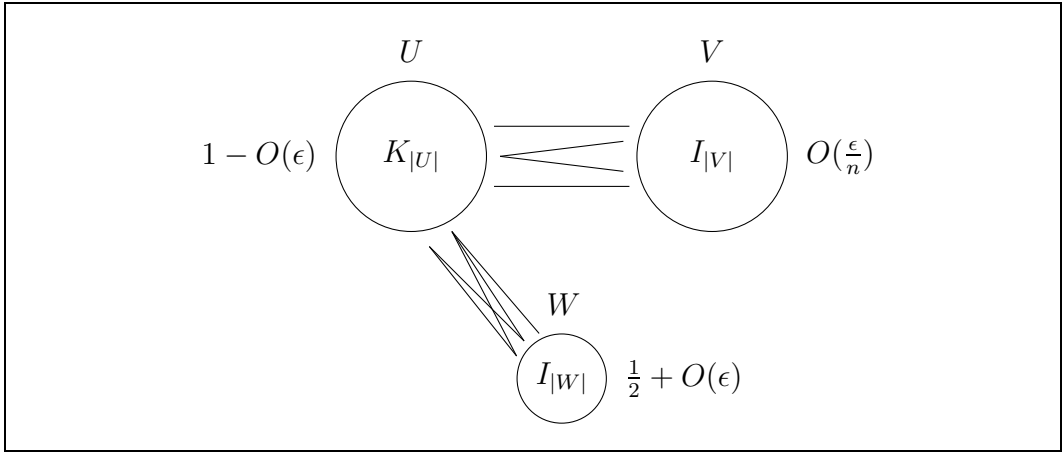


\begin{proposition}\label{almost pineapple structure}
For $n$ sufficiently large, we can partition the vertices of $G$ into three 
sets $U, V, W$ (see Figure \ref{pineapple structure picture}) where 
\begin{itemize}
 \item[(i)] vertices in $V$ have eigenvector entry smaller than $(2+\epsilon) / n$ and have degree one, 
 \item[(ii)] vertices in $U$ induce a clique, 
 all have eigenvector entry larger than $1 - 20\epsilon$, and $(1/2 - 3\epsilon) n \leq |U| \leq (1/2 + \epsilon)n$,

 \item[(iii)] vertices in $W$ have eigenvector entry in the range $\left[1/2 - 4\epsilon,  1/2 + 21 \epsilon \right]$ and are adjacent only to vertices in $U$.  
\end{itemize}
\end{proposition}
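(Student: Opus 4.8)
The plan is to define the sets $U,V,W$ explicitly in terms of eigenvector entries and degrees, and then to extract each claimed property by local modifications of $G$, each time invoking the maximality of $\lambda_1-d$ and Lemma~\ref{modifying conditions}. First I would put $U:=\{z\in V(G):\mathbf{v}_z>1-20\epsilon\}$. For distinct $z,z'\in U$ we have $\mathbf{v}_z\mathbf{v}_{z'}>(1-20\epsilon)^2>1/2+n^{-1/2}+5n^{-1}$ once $\epsilon$ is a small enough absolute constant and $n$ is large, so $z\sim z'$ by Lemma~\ref{modifying conditions}; hence $U$ induces a clique, and since $K_{|U|}\subseteq G$ we get $|U|-1\leq\lambda_1\leq n/2+\sqrt n$, so $|U|\leq(1/2+\epsilon)n$. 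Both $x$ and the vertex $u$ from Lemma~\ref{u good} lie in $U$. From $d_u-\lambda_1\mathbf{v}_u=\sum_{z\sim u}(1-\mathbf{v}_z)=O(\sqrt n)$ and the fact that $u$ is adjacent to all of $U\setminus\{u\}$, I get $\sum_{z\in U}(1-\mathbf{v}_z)=O(\sqrt n)$; in particular at most $O(\sqrt n/\epsilon)$ neighbours of $u$ lie outside $U$, so $|U|\geq d_u-O(\sqrt n/\epsilon)\geq(1/2-3\epsilon)n$ by Lemma~\ref{u good}. This is part (ii).

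Next I would establish two facts. \emph{(a) Every $z\notin U$ has a neighbour in $U$.} If not, deleting all edges at $z$ and joining $z$ instead to $\min(d_z,|U|)$ vertices of $U$ does not increase $e(G)$, while by the eigenvector equation and $\sum_{w\in U}(1-\mathbf{v}_w)=O(\sqrt n)$ it strictly increases $\mathbf{v}^t A\mathbf{v}$ (every old neighbour of $z$ has entry $\leq1-20\epsilon$, every new one has entry $>1-20\epsilon$), hence strictly increases $\lambda_1-d$ --- a contradiction. \emph{(b) No $z\notin U$ has $\mathbf{v}_z>1/2+21\epsilon$.} Such a $z$ would satisfy $\mathbf{v}_z\mathbf{v}_w>1/2+n^{-1/2}+5n^{-1}$ for every $w\in U$, so by Lemma~\ref{modifying conditions} it is adjacent to all of $U$; then $\lambda_1\mathbf{v}_z\geq\sum_{w\in U}\mathbf{v}_w\geq|U|-O(\sqrt n)\geq(1/2-3\epsilon)n-O(\sqrt n)$, which forces $\mathbf{v}_z>1-20\epsilon$, contradicting $z\notin U$.

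Now set $V:=\{z\notin U:d_z=1\}$ and $W:=V(G)\setminus(U\cup V)$. By (a) a vertex of $V$ has its unique neighbour in $U$, so $\mathbf{v}_z\leq 1/\lambda_1<(2+\epsilon)/n$, giving (i). For $z\in W$ we have $d_z\geq2$ and, by (a), a neighbour $w\in U$; since $z$ has another neighbour and, using (a) together with the connectedness of the clique $U$, deleting one edge joining two vertices of degree at least two cannot disconnect $G$, Lemma~\ref{modifying conditions} forces $\mathbf{v}_z\mathbf{v}_w\geq1/2-3\epsilon$, whence $\mathbf{v}_z\geq1/2-3\epsilon$; with (b) this gives $\mathbf{v}_z\in[1/2-4\epsilon,1/2+21\epsilon]$. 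Finally, if $z\in W$ had a neighbour $w\notin U$ then $w\in W$ (a $V$-vertex has its only neighbour in $U\neq z$), so $\mathbf{v}_z\mathbf{v}_w\leq(1/2+21\epsilon)^2<1/2-3\epsilon$, and deleting $zw$ (again not a bridge) would increase $\lambda_1-d$ --- a contradiction. Hence $W$ is adjacent only to $U$, which is (iii).

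I expect the main obstacle to be fact (a) together with the edge-deletion steps: the arithmetic is routine, but one must verify at each step that the modified graph is still connected, which is the only genuinely delicate point. For (a) one actually also reroutes any degree-one old neighbours of $z$ to $x\in U$; for the deletions one uses that once (a) holds, every vertex outside $U$ has a $U$-neighbour and $U$ is a clique, so removing one edge joining two vertices of degree at least two leaves $G$ connected. A secondary nuisance is keeping the absolute constants ($20\epsilon$, $21\epsilon$, $4\epsilon$, $(2+\epsilon)/n$, and so on) mutually compatible, which forces $\epsilon$ to be a fixed small constant chosen before letting $n\to\infty$.
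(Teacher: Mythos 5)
Your overall architecture closely parallels the paper's: define $U$ by a large-eigenvector-entry threshold, get the clique and the size bounds from Lemma~\ref{modifying conditions} together with Lemma~\ref{u good}, establish a ``spanning'' fact guaranteeing that every vertex attaches directly to the high-entry clique (so that subsequent edge deletions do not disconnect $G$), and then read off (i) and (iii) from the two halves of Lemma~\ref{modifying conditions}. Your choice of threshold $1-20\epsilon$ for $U$ is a mild simplification over the paper (which takes threshold $9/10$ and then boosts all entries in $U$ to $1-20\epsilon$ in a second step), and your part (ii), your fact (b), and your derivations of (i) and (iii) \emph{from} fact (a) are all fine.

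The genuine gap is in your proof of fact (a), which is the load-bearing step: everything about connectivity, and the claim that $V$-vertices hang off $U$, routes through it. Two problems with ``delete all edges at $z$ and rejoin $z$ to $\min(d_z,|U|)$ vertices of $U$.'' First, connectivity: if $z$ is a cut vertex, the components of $G-z$ attached to the rest of $G$ only through $z$ become disconnected, and your patch of rerouting \emph{degree-one} old neighbours of $z$ to $x$ does not cover, say, a triangle or a path glued to $G$ only at $z$; at this stage of the argument nothing rules such configurations out. Second, the case $d_z>|U|$ is not vacuous and the arithmetic fails there: all neighbours of $z$ lie in $U^c$, so $d_z\leq |U^c|\leq(1/2+3\epsilon)n$ while $|U|\geq(1/2-3\epsilon)n$, leaving a window of width $6\epsilon n$. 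In that window the old neighbour-sum is $\lambda_1\mathbf{v}_z$, which can be as large as roughly $(1-20\epsilon)n/2$, whereas the new neighbour-sum is only guaranteed to be at least $|U|(1-20\epsilon)\geq(1/2-3\epsilon)(1-20\epsilon)n$; the difference can be negative by $\Theta(\epsilon n)$, so $\textbf{v}^tA\textbf{v}$ need not increase, and the accompanying drop in $d$ (at most $12\epsilon$) does not compensate for the possible drop of $\Theta(\epsilon)$ in the Rayleigh lower bound on $\lambda_1$. The paper avoids both issues with a gentler modification: it shows every vertex is adjacent to some vertex of eigenvector entry $1$ by rerouting \emph{one} edge per offending vertex to $x$, simultaneously for all offending vertices; the edge count is unchanged, the rerouted edge strictly gains since the new endpoint has entry $1>\mathbf{v}_w$, and connectivity is preserved because the entry-$1$ vertices form a clique and after the modification every vertex has a direct edge into it. That statement implies your fact (a) (an entry-$1$ neighbour is in particular a $U$-neighbour), so substituting it repairs your proof with no other changes.
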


\begin{proof}
  By Lemma~\ref{modifying conditions}, any two vertices in $G$ with eigenvector entry $1$ are adjacent.  Moreover, it is easy to see that
  every vertex in $G$ is incident to at least one vertex with eigenvector entry $1$:  if not, for each vertex not incident to a vertex
  with eigenvector entry $1$, delete one of its edges and add a new edge from that vertex to a vertex with eigenvector entry $1$ (such as
  the vertex $x$).  The resulting graph is connected, will have the same number of edges as the original graph, and will have strictly larger $\lambda_1$
  (this can be seen by considering the Rayleigh quotient, as in the proof of Lemma~\ref{modifying conditions}).
  So by maximality of $G$, there are no such vertices.  This implies that the set of edges that are incident
  to a vertex with eigenvector entry $1$ spans the vertex set of $G$. 
  In particular, if we remove any edge that is not incident to a vertex with eigenvector entry $1$, we do not disconnect the graph.  We will use this fact repeatedly in this proof.

\begin{itemize}
\item[(i)] Let $V$ consist of all vertices in $G$ with eigenvector entry 
less than $1/2 - 4 \epsilon$.  By Lemma~\ref{modifying conditions}, removing 
any edge incident to a vertex in $V$ strictly increases $\lambda_1 - d$, so each vertex in $V$ has degree one.  By equation~\eqref{eigenvector equation},
the eigenvector entry of any such vertex is at most $1/\lambda_1 < (2+\epsilon) / n $, when $n$ is large enough.
\item[(ii)] From Lemma~\ref{u good}, we have a vertex $u$ such that $d_u - \lambda_1 \mathbf{v}_u = O(\sqrt{n})$.  Let $X$ be the set of neighbors $z$ of $u$ such that $\mathbf{v}_z < 9/10$.  Then we have
 \[ (1 - 9/10)|X| \leq \sum_{y \sim u} 1 - \mathbf{v}_y = d_u - \lambda_1 \mathbf{v}_u = O(\sqrt{n}). \]
Hence $|X| = O(\sqrt{n})$.  Let $U$ be all vertices in $G$ with eigenvector entry at least $9/10$.  So, by Lemma~\ref{u good}
 \[ |U| \geq d_u - |X| \geq n/2 - 2 \epsilon n - O(\sqrt{n}) . \]
For $n$ large enough, we have $|U| \geq (1/2 - 3\epsilon) n$. For sufficiently large $n$, by Lemma~\ref{modifying conditions} these vertices are all adjacent to each other.  For the upper bound on $|U|$ we use the expression for $e(G)$ in Lemma~\ref{spectral radius and average degree}
\[ |U| (|U| - 1) \leq 2e(G) \leq  \frac{n^2}{4} + c_2n\sqrt{n}, \]
which implies $|U| \leq (1/2+\epsilon) n$ for large enough $n$.

Now take any vertex $y \in U$.  If $x$ is a vertex with largest eigenvector entry, then 
\begin{equation}\label{y_bound}
 \lambda_1 - \lambda_1 \mathbf{v}_y \leq \sum_{z \in N(x) \setminus N(y)} \mathbf{v}_z \leq \mathbf{v}_y + \sum_{z \in U^C} \mathbf{v}_z . 
\end{equation}
We have
 \begin{eqnarray*}
  \lambda_1 \sum_{z \in U^C} \mathbf{v}_z \leq \sum_{z \in U^C} d_z &\leq& 2e(G) - 2|E(U,U)|\\
   &\leq& \frac{n^2}{4} + c_2 n \sqrt{n} - (1/2 - 3\epsilon)(1/2 - 3 \epsilon - 1/n) n^2 \\
   &\leq & 4 \epsilon n^2 ,
 \end{eqnarray*}
for $n$ sufficiently large, where we are using the expression for $e(G)$ given by Lemma~\ref{spectral radius and average degree}.
In particular,
 \[ \sum_{z \in U^C} \mathbf{v}_z \leq 9 \epsilon n .\] 
Finally, by equation~\ref{y_bound} we have 
 \[ \mathbf{v}_y \geq 1 - \frac{1}{\lambda_1} \sum_{z \in U^C} \mathbf{v}_z -\frac{\mathbf{v}_y}{\lambda_1} \geq (1 - 20 \epsilon) .\]
 
\item[(iii)] Let $W$ consist of all remaining vertices of $G$.  If a vertex 
has eigenvector entry smaller than $1/2 - 4\epsilon$ then it is in $V$ by
construction.  If a vertex $z \in W$ has eigenvector entry larger than $1/2 + 21\epsilon$
then we have
 \[ (1/2 + 21 \epsilon) (1 - 20 \epsilon) > 1/2 + \epsilon, \]
if $\epsilon < 1/50$, say.   So for sufficiently large $n$, by Lemma~\ref{modifying conditions} we 
have that $z$ is adjacent to every vertex in $U$.  But by the proof of part (ii), this implies that $\mathbf{v}_z > 1 - 20\epsilon$, which contradicts $z \in W$.

For $z \in W$ and any vertex $y \in U^C$, then 
 $\mathbf{v}_y\mathbf{v}_z \leq (1/2 + 21 \epsilon)(1/2 + 21\epsilon) < 1/4 + 22 \epsilon$
and so by Lemma~\ref{modifying conditions} there is no edge between $y$ and 
$z$ in the maximal graph $G$.
\end{itemize}
\end{proof}

\begin{theorem}
For sufficiently large $n$, $G$ is a pineapple graph.
\end{theorem}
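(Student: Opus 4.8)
The plan is to build on the decomposition $V(G)=U\sqcup V\sqcup W$ from Proposition~\ref{almost pineapple structure}: first show that the ``middle'' set $W$ is empty, then show that the pendant vertices in $V$ are all attached to one vertex of the clique $U$. Together these give that $G$ is $K_{|U|}$ together with $n-|U|$ pendant edges at a single vertex, i.e.\ a pineapple graph.

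\textbf{Step 1: $W=\emptyset$.} Suppose $w\in W$. Since $w$ is adjacent only to vertices of $U$, each of eigenvector entry at most $1$, equation~\eqref{eigenvector equation} gives $d_w\ge\lambda_1\mathbf{v}_w\ge(1/2-4\epsilon)\lambda_1$, so $d_w$ is roughly $n/4$. Let $G'$ be obtained from $G$ by deleting every edge at $w$ and inserting the single edge $\{w,x\}$; since $N(w)\subseteq U$ and $U$ is a clique, $G'$ is connected, $e(G')=e(G)-d_w+1$, and the average degree drops by $2(d_w-1)/n$. The key is to bound $\lambda_1(G')$ not by the Rayleigh quotient at $\mathbf{v}$ (which turns out to be exactly borderline) but by the test vector $\mathbf{z}$ obtained from $\mathbf{v}$ by zeroing the $w$-coordinate, reflecting that a pendant vertex has tiny eigenvector entry. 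Since this zeroing kills every contribution of an edge at $w$, we get $\mathbf{z}^t A(G')\mathbf{z}=\mathbf{v}^t A(G)\mathbf{v}-2\lambda_1\mathbf{v}_w^2=\lambda_1(\mathbf{v}^t\mathbf{v}-2\mathbf{v}_w^2)$ and $\mathbf{z}^t\mathbf{z}=\mathbf{v}^t\mathbf{v}-\mathbf{v}_w^2$, so
\[
\lambda_1(G')\ \ge\ \frac{\mathbf{z}^t A(G')\mathbf{z}}{\mathbf{z}^t\mathbf{z}}\ =\ \lambda_1-\frac{\lambda_1\mathbf{v}_w^2}{\mathbf{v}^t\mathbf{v}-\mathbf{v}_w^2}.
\]
Writing $d'$ for the average degree of $G'$, this yields
\[
(\lambda_1(G')-d')-(\lambda_1-d)\ \ge\ \frac{2(d_w-1)}{n}-\frac{\lambda_1\mathbf{v}_w^2}{\mathbf{v}^t\mathbf{v}-\mathbf{v}_w^2}.
\]
Plugging in $d_w\ge\lambda_1\mathbf{v}_w$, the bound $\mathbf{v}^t\mathbf{v}>n/2-2\epsilon n-O(\sqrt n)$ from~\eqref{vvt bound}, the value $\lambda_1=n/2+c_1\sqrt n$ with $|c_1|<1$ from Lemma~\ref{spectral radius and average degree}, and $1/2-4\epsilon\le\mathbf{v}_w\le1/2+21\epsilon$, the right-hand side is at least $\mathbf{v}_w\bigl(1-\mathbf{v}_w/(1-4\epsilon)\bigr)-O(1/\sqrt n)$, which exceeds, say, $1/8$ once $\epsilon$ is a small enough absolute constant and $n$ is large. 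This contradicts the maximality of $G$, so $W=\emptyset$.

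\textbf{Step 2: all pendants at one vertex.} After Step~1 we have $V(G)=U\sqcup V$ with $U$ a clique of size $m:=|U|$ and every vertex of $V$ a pendant whose unique neighbour lies in $U$ (Proposition~\ref{almost pineapple structure}(i),(ii)); hence $e(G)=\binom m2+(n-m)$ and $d$ is now fixed, so it remains only to maximize $\lambda_1$ among graphs of this shape. Note $x\in U$ (as $\mathbf{v}_x=1$). If some pendant $p$ is attached to a vertex $u\ne x$ of $U$, form $G'$ by moving $p$ to $x$; then $\mathbf{v}^t A(G')\mathbf{v}-\mathbf{v}^t A(G)\mathbf{v}=2\mathbf{v}_p(1-\mathbf{v}_u)\ge0$, so $\lambda_1(G')\ge\lambda_1(G)$. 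If this is strict, maximality is contradicted; if it is an equality then $\mathbf{v}_u=1$ and, by the Rayleigh characterization together with Perron--Frobenius, $\mathbf{v}$ is the leading eigenvector of the connected graph $G'$ as well, so comparing equation~\eqref{eigenvector equation} at $x$ in $G$ and in $G'$ (where in $G'$ the vertex $x$ has acquired the extra neighbour $p$) forces $\mathbf{v}_p=0$, which is impossible. Hence every pendant is attached to $x$, and $G=\mathrm{PA}(m,n)$ is a pineapple graph.

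\textbf{The main obstacle} is Step~1: the pineapple is a genuine knife-edge maximizer of $\lambda_1-d$, so every naive local move (deleting one edge at $w$, turning $w$ into a pendant, or absorbing $w$ into the clique) changes $\lambda_1-d$ by $0$ to leading order and one must control the second-order terms. The device that makes this work is to choose the Rayleigh test vector so as to anticipate the eigenvector's post-move value (small at $w$), which roughly halves the apparent drop in $\lambda_1$; with the unmodified eigenvector the comparison is an exact tie.
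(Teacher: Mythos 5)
Your proof is correct, and Step 1 takes a genuinely different route from the paper's. To kill the set $W$, the paper absorbs a vertex $z\in W$ \emph{into} the clique: it adds the roughly $n/4$ missing edges from $z$ to $U$ (raising $d$ by about $1/2$) and certifies a gain of about $3/4$ in $\lambda_1$ via the test vector that bumps the $z$-entry of $\mathbf{v}$ up to $1$. You do the dual move: strip $w$ down to a pendant, deleting roughly $n/4$ edges (lowering $d$ by about $1/2$) and certifying a loss of only about $\mathbf{v}_w^2\approx 1/4$ in $\lambda_1$ via the test vector that zeroes the $w$-entry. Both devices break the exact first-order tie that the unperturbed eigenvector gives, and both rest on the same inputs, namely Proposition~\ref{almost pineapple structure} and the estimate \eqref{vvt bound} on $\mathbf{v}^t\mathbf{v}$ (you only need the lower bound there, and your arithmetic is somewhat shorter and cleaner than the paper's chain of big-$O$ manipulations of the Rayleigh quotient). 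Your Step 2 is essentially the paper's argument, moving pendants to $x$ and exploiting the equality case of the Rayleigh quotient; your way of closing the equality case, reading off $\mathbf{v}_p=0$ from the eigenvector equation at $x$ in $G'$, is a tidy alternative to the paper's appeal to the uniqueness of the maximal eigenvector entry in a pineapple graph. The one point worth stating explicitly in Step 1 is that $G'$ remains connected, which holds because every vertex outside $U$ has a neighbour in the clique $U$ and $x\in U$.
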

\begin{proof}
  Take $U,V,W$ as in the previous lemma.  We begin by showing that the set $W$ must be empty.
  Proceeding by contradiction, let $z$ be in $W$.  Furthermore
let $G^+$ be the graph obtained by adding edges from $z$ to every vertex in $U$.
We will show that $\lambda_1(G^+) - d(G^+) > \lambda_1(G) - d(G)$, which contradicts the maximality of $G$.

Since the vertex $z$ is adjacent only to vertices in $U$, and the
fact that vertices in $U$ have eigenvector entry between $1-20\epsilon$ and $1$,
equation~\eqref{eigenvector equation} yields
 \[\lambda_1 (1/2 - 4 \epsilon) \leq \lambda_1 \mathbf{v}_z \leq d_z(G) \leq \frac{\lambda_1 \mathbf{v}_z}{1 - 20 \epsilon} = (1/2 + O(\epsilon))\lambda_1 .\]

\noindent Using the expression for $\lambda_1$ in Lemma~\ref{spectral radius and average degree}, for large enough $n$ we have
 \[ \left(1-\epsilon\right)\frac{n}{4} \leq d_z(G) \leq \left(1+\epsilon\right) \frac{n}{4} .\]
So we can bound the change in the average degrees
 \[ d(G^+) - d(G) \leq \frac{2(|U| - (1-\epsilon)n/4)}{n}< 1/2 + 3\epsilon .\]
 Next we find a lower bound on $\lambda_1(G^+) - \lambda_1(G)$.
 Let $\textbf{w}$ be the vector that is equal to $\textbf{v}$ on all 
vertices except $z$, and equal to $1$ for $z$.  Then, 
 \[ \lambda_1(G^+) \geq \frac{\textbf{w}^tA^+\textbf{w}}{\textbf{w}^t\textbf{w}} .\] 
We first find a lower bound for the numerator (with abuse of big-O notation with inequalities)
\begin{eqnarray*} 
\textbf{w}^tA^+\textbf{w} & \geq & \textbf{w}^tA\textbf{w} + 2(|U| - d_z(G))(1-O(\epsilon)) \geq \textbf{w}^tA\textbf{w} + (1/2-O(\epsilon))n \\
& \geq &  \textbf{v}^tA\textbf{v} + 2d_z(G) \left(1-\mathbf{v}_z\right)(1-20\epsilon) + (1/2-O(\epsilon))n \\
& \geq &  \textbf{v}^tA\textbf{v} + 2d_z(G) \left(1/2 - 31\epsilon\right) + (1/2-O(\epsilon))n \\
& \geq &  \textbf{v}^tA\textbf{v} + (3/4-O(\epsilon))n .
\end{eqnarray*}
Similarly, we find an upper bound for the denominator
\begin{eqnarray*}
\textbf{w}^t \textbf{w} &=& \textbf{v}^t\textbf{v} + 1 - \mathbf{v}_z^2 \\
&\leq& \textbf{v}^t\textbf{v} + 1 - (1/2 - 4\epsilon)^2 \\
&\leq& \textbf{v}^t\textbf{v} + 3/4 + 4\epsilon .
\end{eqnarray*}
Combining these, and using the bound on $\textbf{v}^t\textbf{v}$ from
the proof of Lemma~\ref{modifying conditions}, we get
\begin{eqnarray*}
 \lambda_1(G^+) - \lambda_1(G) &\geq& \frac{\textbf{w}^tA^+\textbf{w}}{\textbf{w}^t\textbf{w}} - \frac{\textbf{v}^tA\textbf{v}}{\textbf{v}^t\textbf{v}}\\
 &\geq& \frac{\textbf{v}^t\textbf{v}(3/4-O(\epsilon))n - \textbf{v}^tA\textbf{v} (3/4 + 4\epsilon)}{\textbf{v}^t\textbf{v} (\textbf{v}^t\textbf{v} + 3/4 + 4\epsilon)}\\
 &\geq & \frac{(3/4-O(\epsilon))n - (3/4 + 4\epsilon)\lambda_1(G)}{\textbf{v}^t\textbf{v} + 3/4 + 4\epsilon} \\ 
 &= &  3/4 + O(\epsilon) .
\end{eqnarray*}
Hence $\lambda_1(G^+) - \lambda_1(G) > d(G^+) - d(G)$, and by maximality of $G$ we conclude that $W = \emptyset$.

At this point we know that $G$ consists of a clique together with a set of pendant vertices $V$.  All that remains is to show that all of the pendant vertices  are incident to the same vertex in the clique.  Let $V = \left\{v_1, v_2, \cdots, v_k\right\}$, and let $u_i$ be the unique vertex in $U$ that $v_i$ is adjacent to.  Let $G^+$ be the graph obtained from $G$ by deleting the edges $\left\{v_i,u_i\right\}$ and adding the edges $\left\{v_i,x\right\}$, where $x$ is a vertex with eigenvector entry $1$.  Now, $d(G^+) = d(G)$, and 
\[ \lambda_1(G^+) -\lambda_1(G) \geq \frac{\textbf{v}^t A^+ \textbf{v}}{\textbf{v}^t\textbf{v}} - \frac{\textbf{v}^t A \textbf{v}}{\textbf{v}^t\textbf{v}}, \]
with equality if and only if $\textbf{v}$ is a leading eigenvector for $A^+$.  We have
\[\frac{\textbf{v}^t A^+ \textbf{v}}{\textbf{v}^t\textbf{v}} - \frac{\textbf{v}^t A \textbf{v}}{\textbf{v}^t\textbf{v}} = \frac{1}{\textbf{v}^t\textbf{v}}\left( \sum_{i=1}^k 1 - \mathbf{v}_{u_i} \right) \geq 0 ,\]
with equality if and only if $\mathbf{v}_{u_i} = 1$ for all $1 \leq i \leq k$.  By maximality of $G$, we have equality in both of the above inequalities, and so $\textbf{v}$ is a leading eigenvector for $G^+$, and every vertex in $U$ incident to a vertex in $V$ has eigenvector entry 1.  $G^+$ is a pineapple graph, and it is easy to see that there is a single vertex in a pineapple graph with maximum eigenvector entry.  It follows that the vertices in $V$ are all adjacent to the same vertex in $U$, and hence $G$ is a pineapple graph.

\end{proof}

\section*{Acknowledgements}
We would like to thank Vlado Nikiforov for helpful comments.
\bibliographystyle{plain}
\bibliography{bib}

\end{document}